\newcommand{\T}{{\mathcal{T}}}
\newcommand{\SR}{SR}
\newcommand{\pSR}{pSR}
\newcommand{\Q}{\mathbb{Q}}
\newcommand{\Z}{\mathbb{Z}}
\newcommand{\K}{\mathcal{K}}
\newcommand{\N}{\mathcal{N}}
\renewcommand{\L}{\mathcal{L}}
\providecommand{\Ginf}[1]{\Gamma^{\mathrm{in}}_{#1}}
\title{Scott sentence complexities of linear orderings}
\author{David Gonzalez}
\author{Dino Rossegger}
\address{Department of Mathematics, University of California, Berkeley}
\email{david\_gonzalez@berkeley.edu}
\address{Department of Mathematics, University of California, Berkeley {\normalfont and} Institute of Discrete Mathematics and Geometry, Technische Universit\"at Wien}
\email{dino@math.berkeley.edu}
\subjclass{03C57, 03D45, 03C70, 03E15}
\thanks{The work of Rossegger was supported by the European Union's Horizon 2020 Research and Innovation Programme under the Marie Sk\l{}odowska-Curie grant agreement No. 101026834 — ACOSE}
\begin{document}
\maketitle
\begin{abstract}
  We study possible Scott sentence complexities of linear orderings using two
  approaches. First, we investigate the effect of the Friedman-Stanley embedding
  on Scott sentence complexity and show that it only preserves $\Pinf{\alpha}$
  complexities.
  We then take a more direct approach and exhibit linear orderings of all Scott
  sentence complexities except $\Sinf{3}$ and $\Sinf{\lambda+1}$ for $\lambda$ a
  limit ordinal. We show that the former can not be the Scott sentence
  complexity of a linear ordering. In the process we develop new techniques
  which appear to be helpful to calculate the Scott sentence complexities of
  structures.
\end{abstract}

In this article we give a comprehensive Scott analysis of the class of linear
orderings using three definitions of rank. Given a structure $\A$ the
\emph{(parameterless) Scott rank} $\SR(\A)$, is the least $\alpha$, such that
the automorphism orbits of all tuples in $\A$ are $\Sinf{\alpha}$ definable; the
\emph{parameterized Scott rank} $\pSR(\A)$, is the least ordinal $\alpha$ such
that there is a parameter $\bp\in A^{<\omega}$ such that $SR((\A,\bp))=\alpha$;
and the \emph{Scott sentence complexity} of $\A$, $SSC(\A)$ is the least of the
complexities $\Pinf{\alpha}$, $\Sinf{\alpha}$, $\dSinf{\alpha}$ such that $\A$
has a Scott sentence of one of these complexities. While the latter is not an
ordinal, it was shown in~\cite{alvir2020a}, that the Scott sentence complexity
is indeed a well-defined notion of rank. \cref{table:invariants}, taken
from~\cite{montalban2021}, shows the relationship between these invariants. As
can be seen from the definitions of the two Scott ranks and
\cref{table:invariants}, calculating the Scott sentence complexity of a
structure comes down to calculating the complexity of the automorphism orbits of
the structure's tuples in all but the limit ordinal cases. The case of limit ordinals is robustly
treated for the first time in this article and we discuss it more in \cref{sec:fs-ssc}. We refer the reader
to~\cite{montalban2021} for proofs and background on Scott analysis and many of
the tools used in this article.

\begin{table}[ht]
\centering
    \begin{tblr}{c|c|c|c}
    \hline
    SSC                &    pSR   &SR          & complexity of parameters\\
    \hline\hline
    $\Sinf{\alpha+2}$  & $\alpha$ & $\alpha+2$ & $\Pinf{\alpha+1}$\\
    $\dSinf{\alpha+1}$ & $\alpha$ & $\alpha+1$ & $\Pinf{\alpha}$\\
    $\Pinf{\alpha+1}$  & $\alpha$ & $\alpha$   & none\\
    \hline
    \SetCell[r=1,c=4]{c} $\alpha$ limit &&&\\
    \hline
    $\Sinf{\alpha+1}$  & $\alpha$ & $\alpha+1$ & $\Pinf{\alpha}$\\
    $\Pinf{\alpha}$    & $\alpha$ & $\alpha$   & none\\
    \hline
  \end{tblr}

  \caption{\cite[Table 1]{montalban2021}. Relationship of the different Scott invariants.
  The last column contains the complexity of the automorphism orbit of the
parameters involved in the parameterized Scott rank.}
\label{table:invariants}
\end{table}

By calculating the Scott sentence complexities occurring in a class of structures
we obtain a detailed picture of the descriptive complexity of its isomorphism relation.
By the Lopez-Escobar theorem, the Scott sentence complexities correspond to the
Borel complexity of the orbits, and it is well known that the isomorphism relation on a class of
structures is Borel if and only if the Scott ranks in the class are bounded
below $\omega_1$.

The class of linear orderings is a particularly interesting object of study as
it is Borel complete---for every class of structures $\mf K$ there is a
Borel function $f:Mod(\mf K)\to Mod(\LO)$ such that for all $\A,\B\in \mf K$,
$\A\cong \B$ if and only $f(\A)\cong f(\B)$~\cite{friedman1989}.
But the class of linear orderings is not complete for stronger forms of reduction such as faithful Borel
reducibility or effective bi-interpretability~\cite{gao2001}. 
If one dives into the realm of computable structure theory, one quickly sees
that linear orderings are far from being "structurally complete". For example,
Richter's result that no linear ordering can code a non-computable subset
of $\omega$ in its isomorphism type, shows that linear orderings are quite weak
in terms of coding power \cite{Ric81}. However, if one turns to non-effective notions such as Scott
rank and Scott sentence complexity, the situation becomes less clear. It is not
at all difficult to produce linear orderings of Scott rank $\alpha$ for $\alpha$
any countable ordinal, in fact Ash shows that one can find such examples by
considering well-orderings~\cite{ash1986}. Even among computable linear
orderings one can find every possible Scott rank; Harrison~\cite{harrison1968} and recently
Calvert, Goncharov, and Knight~\cite{calvert2007} exhibited linear orderings of
Scott rank $\ock+1$, and $\ock$ repectively. This beckons the question whether the
same phenomenon appears if one considers the more fine-grained notion of Scott
sentence complexity.

There are two approaches one might consider to study the possible Scott sentence
complexities in a specific class of structures. One is to study existing
Borel reductions, such as the Friedman-Stanley embedding~\cite{friedman1989} and
whether this embedding preserves Scott sentence complexities. We show that this
approach fails in \cref{sec:fs-ssc}. Our results
show that the Friedman-Stanley embedding does not preserve Scott sentence
complexity, in fact, for $\alpha>\omega$, if $SSC(\A)=\Sinf\alpha$, then,
$SSC(FS(\A))=\Pinf{\alpha+1}$ (This is true for finite $\alpha$ modulo a
constant). The picture is different if $SSC(\A)=\Pinf{\alpha}$, then
$SSC(FS(\A))=\Pinf{\alpha}$ even in the case when $\alpha$ is a limit ordinal
(again modulo a constant if $\alpha$ is finite). These results involve a deep
analysis on the effect of parameters on the tree-of-tuples of a structure and
a novel characterization of structures with Scott sentence complexity
$\Pinf{\lambda}$ using what we call unstable $\lambda$-sequences. 
In \cref{sec:universality} we show that our choice to study the Friedman-Stanley
embedding was not arbitrary; there are no better Borel embeddings into the class
of linear orderings in terms of preservation of Scott sentence complexity. In order to show 
this, we develop a concept we call $\alpha$-universality. This tool provides
upper bounds for the number of structures with particular Scott sentence complexities.

In \cref{sec:notssc} and \cref{sec:ssc} we take another more direct approach and
try to find examples of linear orderings of given Scott sentence complexities.
A summary of our findings can be found in the table below.

  \begin{table}[ht]
    \begin{tblr}{c|c|c}
      SSC                &   LO  & Reference\\
      \hline\hline
      $\Pinf{1}$ & + & $1$-element l.o.\\
      $\dSinf{1}$& + & $n$-element l.o. ($n>1$)\\
      $\Pinf{2}$ & + & $\Q$\\
      $\Sinf{3}$ & - & \cref{thm:nosigma3}\\
      \hline
      \SetCell[r=1,c=3]{c} successor $\alpha$: \\
      \hline
      $\Sinf{\alpha+3}$  & + & \cref{cor:SSCsuccessors}\\
      $\dSinf{\alpha}$  & + & \cref{cor:SSCsuccessors}\\
      $\Pinf{\alpha}$  & + & \cref{cor:SSCsuccessors}\\
      \hline
      \SetCell[r=1,c=3]{c} $\lambda$ a limit ordinal:\\
      \hline
      $\Pinf{\lambda}$ & + & \cref{prop:scatteredlimit}\\
      $\Sinf{\lambda+1}$&? & \cref{ques:lambda+1}\\
      $\Sinf{\lambda+2}$&+ & \cref{thm:Slim+2}\\
      $\Sinf{\lambda+3}$&+ & \cref{thm:Slim+3}\\
    \end{tblr}
    \caption{Scott sentence complexities of linear orderings. Complexities not
    in the table are impossible for structures in general.}
    \label{table:results}
  \end{table}
\subsection{Remarks on effectivity}
While this article's primary concern is not the effectiveness of the presented
results, many of our results in this article can be effectivized without much effort. The
interested reader can find further discussions on this matter in subsections at the end 
of \cref{sec:fs-ssc,sec:ssc}. Included are comments on potential difficulties and open 
questions that arise from these efforts.

\subsection{Preliminaries}
We assume that the reader is familiar with the basic techniques of Scott
analysis and with linear orderings in general. For background we refer to
Montalb\'an's upcoming book~\cite{montalban2021} and the standard reference by
Ash and Knight~\cite{ash2000}. Let us highlight the following lemma
which we will use as basic tools in our proofs without reference.
\begin{lemma}[{\cite[Lemma 15.8]{ash2000}}]\label{lem:bfandpartitions}  Suppose $\A,\B\in\LO$. Then $\A\leq_1 \B$ if and only if $\A$ is infinite or at least as large as $\B$. For $\alpha>1$, $\A\leq_\alpha \B$ if and only if for every $1\leq\beta<\alpha$ and every partition of $\B$ into intervals $\B_0,\dots,\B_n$, with endpoints in $\B$, there is a partition of $\A$ into intervals $\A_1,\dots,\A_n$ with endpoints in $\A$, such that $\B_i\leq_\beta \A_i$.
\end{lemma}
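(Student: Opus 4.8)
The plan is to prove both equivalences by transfinite induction on $\alpha$, working from the standard game-theoretic definition of the back-and-forth relations: $\A\leq_0\B$ means that $\A$ and $\B$ (with the relevant tuples) realize the same atomic type, and $\A\leq_\alpha\B$ holds iff for every $\beta<\alpha$ and every tuple $\bar d$ chosen in $\B$ there is a tuple $\bar c$ in $\A$ with $(\B,\bar d)\leq_\beta(\A,\bar c)$ (recall that passing to the next round flips the two sides). The case $\alpha=1$ is immediate from this definition: $\A\leq_1\B$ asks that every tuple of $\B$ be matched in order type by a tuple of $\A$, and a tuple of $k$ distinct points can be matched exactly when $\A$ has at least $k$ points, so the condition holds for all tuples precisely when $\A$ is infinite or $|\A|\geq|\B|$.

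The technical core I would isolate is a decomposition principle for tuples of equal order type. Suppose $\bar c\in\A$ and $\bar d\in\B$ have the same order type, so that they cut $\A$ and $\B$ into the same number of consecutive intervals $\A=\A_0+\dots+\A_m$ and $\B=\B_0+\dots+\B_m$, with the chosen points serving as endpoints. I claim that for every $\gamma$, the relation $(\B,\bar d)\leq_\gamma(\A,\bar c)$ holds if and only if $\B_i\leq_\gamma\A_i$ for every $i$. The decisive observation is that in a linear order the matched points act as impassable walls: any further point played in $\B$ either coincides with one of the $d_j$ (and is answered by the corresponding $c_j$) or lies strictly inside exactly one interval $\B_i$, and a legal order-preserving response must then lie in the corresponding $\A_i$. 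Hence a round of the game on $(\B,\bar d)$ against $(\A,\bar c)$ splits into independent rounds played inside the paired intervals, and the induction goes through: unwinding one round of the goal and one round of each interval hypothesis introduces the same left–right flip on both sides, so the orientations match up and the inductive hypothesis applies at the smaller level $\beta<\gamma$.

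With the decomposition principle in hand the statement for $\alpha>1$ falls out by unwinding a single round. Expanding the definition, $\A\leq_\alpha\B$ says that for every $\beta<\alpha$ and every $\bar d\in\B$ there is $\bar c\in\A$ with $(\B,\bar d)\leq_\beta(\A,\bar c)$; but choosing $\bar d$ is exactly choosing a partition of $\B$ into intervals with endpoints in $\B$, choosing the response $\bar c$ is choosing a partition of $\A$ into intervals with endpoints in $\A$, and by the decomposition principle the residual relation $(\B,\bar d)\leq_\beta(\A,\bar c)$ is equivalent to $\B_i\leq_\beta\A_i$ for all $i$. This is verbatim the asserted characterization. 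One should also check that restricting to $1\leq\beta<\alpha$ loses nothing: the $\beta=0$ clause only demands an order-type match, which is already guaranteed once a matching interval partition exists, since such a partition supplies enough endpoints in $\A$.

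The main obstacle is proving the decomposition principle cleanly rather than the final assembly. Two points need care: a challenge tuple may place several points inside a single interval, so one must argue that the whole game restricted to that interval is being simulated (the wall property again), and one must resist the temptation to feed an interval hypothesis $\B_i\leq_\gamma\A_i$ directly into the flipped residual game. Instead each hypothesis must itself be unwound one round so that its orientation agrees with the flipped goal; tracking these flips through the induction, together with fixing a consistent convention for whether endpoints belong to the intervals they bound, is where essentially all of the bookkeeping lies.
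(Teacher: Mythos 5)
Your proof is correct, and it is essentially the standard argument for this result: the paper does not prove this lemma at all (it is quoted as a preliminary from Ash--Knight), and your ``decomposition principle'' is precisely the companion result \cite[Lemma 15.7]{ash2000} that the paper quotes immediately after the statement, from which the partition characterization follows by unwinding one round of the definition of $\leq_\alpha$ exactly as you do. Your handling of the two delicate points---the orientation flips and the reduction of the $\beta=0$ clause to the existence of enough endpoints in $\A$ when $\alpha>1$---is also sound, so there is no gap.
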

Notice that this lemma particularly implies that $(\A,\ba)\leq_\alpha (\B,\bb)$
if and only if $(-\infty, a_0)\leq_\alpha (-\infty, b_0)$,
$(a_{|\ba-1|},\infty)\leq_\alpha (b_{|\bb-1|},\infty)$ and for all $0\leq
i<|\ba-1|$ $(a_i,a_{i+1})\leq_\alpha (b_i,b_{i+1})$~\cite[Lemma 15.7]{ash2000}.

 \section{The Friedman-Stanley embedding and Scott sentence complexity}\label{sec:fs-ssc}
Friedman and Stanley~\cite{friedman1989} showed that the class of linear orderings is Borel complete
by defining a computable operator $\Phi$ that takes a structure in a fixed
vocabulary as input and outputs a linear ordering such that for all $\A,\B$,
$\A\cong\B$ if and only if $\Phi(\A)\cong \Phi(\B)$. This reduction is commonly
referred to as the Friedman-Stanley embedding. It proceeds in two steps.
\begin{enumerate}
   \tightlist
   \item\label{item:tree-of-tuples} Given a structure $\A$ it produces a labeled
      tree $\T_\A$, the
      tree of tuples of $\A$.
   \item From $\T_\A$ it produces a linear ordering $L_\A=\Phi(\A)$.
\end{enumerate}
The Friedman-Stanley embedding has been heavily studied both by descriptive set
theorists and computable structure theorists. Closely connected to our
investigation are Harrison-Trainor and Montalbán's study of the tree of
tuples constructions~\cite{harrison-trainor2022a} and Knight, Soskova and
Vatev's analysis of possible reductions from arbitrary classes of structure to
linear orderings~\cite{knight2020}. One of the results obtained in both these
articles is that one can not embed graphs into linear orderings uniformly using
$L_{\omega_1\omega}$ formulas. This raises the question whether the Friedman-Stanley embedding or any
reduction reducing graphs to linear orderings can preserve Scott sentence
complexity. We consider the first question in this section by analyzing the two
steps of the reduction.
\subsection{The tree of tuples and Scott sentence complexity}
\begin{definition}\label{def:replabeledtreed}
   A \emph{replicated labeled tree} consists of a tree $(T,\succeq)$ with a parent function and
   labeling function $l:T\to \omega$ that satisfies that for every $\sigma\in T$
   with parent $\tau$, there exist infinitely many children $\tilde\sigma$ of
   $\tau$ such that $\T_\sigma\cong \T_{\tilde\sigma}$.
\end{definition}
\begin{definition}\label{def:tree-of-tuples}
   Given a structure $\A$ let $T_\A$ be the labeled tree consisting of all
   tuples from $\A$ ordered by inclusion where each tuple $\ba$ is labeled by a
   natural number coding its finite atomic diagram $D_\A(\ba)$ and the length of
   $\ba$. The \emph{tree of tuples $\T_\A$} is obtained by replicating every branch in
   $T_\A$ infinitely many times.
  \end{definition}

Looking at the construction of $\T_\A$ we can see that the
Friedman-Stanley embedding does not preserve much structure. Say for instance
that $\A$ is rigid (i.e., it has trivial automorphism group); the tree $\T_\A$
will have many automorphisms, and thus the automorphism groups of $\A$ and
$\T_\A$ will behave quite differently.

On the other hand, we start by showing that the Scott ranks of the tree of tuples of a structure
and the Scott rank of the structure are equal. Towards this we need a few
lemmas.
\begin{lemma}\label{lemma:translation} 
   Let $\A$ be a structure in vocabulary $\tau$.
   \begin{enumerate}
      \tightlist
      \item\label{eq:pushforward} Given a $\Pinf{\alpha}$ $\tau$-formula $\phi$, there is a
         $\Pinf{\alpha}$ formula $T_\phi$ in the vocabulary of labeled trees
         such that for any $\ba\in \A$ and every $\sigma\in \T_\A$ with
         $\sigma\succeq \ba$ 
         \[\A\models\phi(\ba)\iff \T_\A\models T_\phi(\sigma).\]
      \item\label{eq:pullback} Given a $\Pinf{\alpha}$ formula $\psi$ in the
         vocabulary of labeled trees, there is a $\Pinf{\alpha}$ $\tau$-formula
         $\psi^*$ such that for any tuple $\bar{\sigma}=(\ba_1,\dots,\ba_n)\in
         \T_\A$ \[\T_\A\models \psi(\bar\sigma) \iff
         \A\models\psi^*(\ba,\dots,\ba_n).\]
      \end{enumerate}
\end{lemma}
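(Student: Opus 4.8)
The goal is a two-way translation between $\tau$-formulas about $\A$ and formulas in the vocabulary of labeled trees about $\T_\A$, preserving the $\Pinf{\alpha}$ complexity level. The natural approach is simultaneous induction on $\alpha$, proving both \eqref{eq:pushforward} and \eqref{eq:pullback} together, since the quantifier steps of one will want to invoke the inductive hypothesis of the other (a $\Pinf{\alpha}$ formula is a countable conjunction of $\Sinf{\beta}$ formulas for $\beta<\alpha$, and the $\Sinf{}$/$\Pinf{}$ duality forces us to carry both directions at each level). Let me sketch what the two translations should do.

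For \eqref{eq:pushforward}, I would think of the tree vocabulary as having the parent/descendant relation $\succeq$ and the label predicate. Since the label of a node $\sigma\succeq\ba$ codes the atomic diagram $D_\A(\ba)$, every atomic $\tau$-fact about $\ba$ is recoverable from the label of $\sigma$, so atomic $\phi$ translate to quantifier-free label conditions $T_\phi$. The key move is handling an existential $\exists y\,\phi(\ba,y)$: in $\A$ this asks for a witness $b$, and in $\T_\A$ the corresponding object is a child $\sigma'\succeq (\ba,b)$ of $\sigma$, so the translation becomes $\exists\,\sigma'\succeq\sigma$ with the right label-length relationship, applied to $T_\phi$ from the inductive hypothesis. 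Conjunctions and the passage from $\Sinf{}$ to $\Pinf{}$ are handled componentwise, and crucially none of this raises the complexity: an existential tree-quantifier over children sits at the same $\Sinf{}$ level. The statement that $T_\phi$ holds of \emph{every} $\sigma\succeq\ba$ (not just the minimal one) should fall out because descendants of $\ba$ carry labels extending $D_\A(\ba)$, so the relevant atomic facts are stable along the branch; I would verify this explicitly in the atomic base case and note it is preserved inductively.

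For \eqref{eq:pullback}, the subtlety is that a tuple $\bar\sigma=(\ba_1,\dots,\ba_n)$ of tree nodes is an \emph{arbitrary} tuple, not a single branch, so the nodes need not be $\succeq$-comparable. I would define $\psi^*$ on the underlying $\A$-tuple $\ba=\ba_1{}^\frown\cdots{}^\frown\ba_n$ obtained by concatenating the tuples that the nodes name. Atomic tree-relations among the $\sigma_i$ ($\succeq$-comparability, equality, label values) are all determined by the atomic $\tau$-diagram of this concatenation together with which initial segments the nodes pick out, so they translate back to quantifier-free $\tau$-conditions on $\ba$. For a tree-quantifier $\exists\,\tau\,\psi(\bar\sigma,\tau)$, the witnessing node $\tau$ names some tuple extending \emph{some} branch; because $\T_\A$ replicates every branch infinitely often, existence of a suitable node in $\T_\A$ is equivalent to existence of a suitable tuple in $\A$, which becomes an $\A$-existential quantifier of the same level.

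The main obstacle I expect is precisely this replication/branching mismatch in \eqref{eq:pullback}: a single tree-quantifier can reach a node on a completely new branch, and I must argue that the infinitely-many-copies condition of Definition~\ref{def:replabeledtreed} guarantees that no node encodes information beyond the $\tau$-isomorphism type of the tuple it names, so that quantifying over nodes faithfully mirrors quantifying over $\A$-tuples without a spurious complexity jump. Once the bookkeeping for non-comparable node tuples and for the length/label coding is set up carefully, the inductive step should be routine; the care lies in making the base case and the quantifier case agree on exactly which atomic tree-facts are $\tau$-definable.
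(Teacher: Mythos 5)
Your treatment of \cref{eq:pushforward} is essentially the paper's proof: atomic $\tau$-facts become label conditions, an existential quantifier becomes an existential quantifier over descendants, and complexity is preserved level by level. One structural remark: the two parts do not need to be proved by a simultaneous induction. Neither translation ever invokes the other's inductive hypothesis; each direction only needs to carry its own $\Sigma/\Pi$ pair, and indeed the paper proves \cref{eq:pushforward} by induction on its own and then proves \cref{eq:pullback} by an entirely non-inductive argument.

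The genuine gap is in the base case of your proposed induction for \cref{eq:pullback}. You claim that the atomic tree-relations among the nodes $\sigma_1,\dots,\sigma_n$ are determined by the atomic $\tau$-diagram of the concatenated tuple ``together with which initial segments the nodes pick out.'' This is false in $\T_\A$, precisely because of replication: if $\sigma_1$ codes $(a,b)$, then exactly one node coding $(a)$ is an ancestor of $\sigma_1$, while infinitely many other nodes also code $(a)$ but are incomparable to $\sigma_1$. Hence for the quantifier-free formula $\sigma_2\preceq\sigma_1$ there are tuples of nodes coding the same pair $\bigl((a,b),(a)\bigr)$ on which it takes both truth values, so no single $\tau$-formula $\psi^*$ evaluated on the concatenation $(a,b,a)$ can be correct for all of them. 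The translation therefore cannot be uniform in $\bar\sigma$: it must depend on the \emph{configuration} of $\bar\sigma$, that is, the isomorphism type of its downward closure in $\T_\A$, which records where the branches through the $\sigma_i$ actually split. Once that data is carried through the induction, an argument in your style can be completed (the quantifier step becomes an $\A$-quantifier together with a countable disjunction over the ways a new node can attach to the given configuration, all of which are realized thanks to replication), but this is exactly the bookkeeping your sketch defers, and it is where the real work lies. The paper avoids it altogether: it fixes $\bar\sigma$, builds a Turing computable embedding $\Psi_{\bar\sigma}$ sending a structure with parameters $(\B,\bp)$ to $(\T_\B,\bar\tau)$, where $\bar\tau$ is chosen to realize the same configuration as $\bar\sigma$, and then quotes the pull-back theorem for Turing computable embeddings to obtain $\psi^*$ of the same complexity, with no induction at all.
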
 
\begin{proof}
   We first show \cref{eq:pushforward} starting with quantifier free formulas.
   Let $\phi$ be quantifier free with free variables $x_1,\dots x_n$ in
   disjunctive normal form. For each disjunct $\phi_i$ let $n_i$ be the index of
   the largest occuring variable and let $T_{\phi_i}$ be the formula expressing
   that the height of $x$ in the tree is at least $n_i$ and that
   $\phi_i\subseteq l(x)$. Then $T_\phi=\bigvee T_{\phi_i}$. The construction of
   $\T_\A$ guarantees that for $\sigma\succeq\ba\in \A$, $\T_\A\models
   T_\phi(\sigma)$ if and only if $\A\models \phi(\ba)$. Now, assume that we
   have a translation for $\phi(\bx,y)$ into $T_\phi(x)$. If $\psi=\exists y
   \phi(\bx,y)$, then let \[T_\psi= (\exists y\succeq x)\, T_\phi(y).\] We have
   that $\A\models \exists y\,\psi(\ba,y)$, if and only if there is $b\in \A$
   such that $\A\models\psi(\ba,b)$ if and only if there is $\sigma\succeq \ba
   \in \T_\A$ $\T_\A\models T_\psi(\sigma)$ if and only if $\T_\A\models
   T_\psi$. This proves the successor case. The limit case is immediate.

   We now prove \cref{eq:pullback} by defining a family of Turing-computable
   embeddings from the class of structures containing $\A$ with parameters to
   labeled trees with parameters. Each of these embeddings will in some sense
   extend the Friedman-Stanley embedding that acts without parameters. Fix a
   tuple $\bar\sigma=(\ba_1,\cdots,\ba_n)\in \T_\A$. Consider any structure $\B$
   in the class of $\A$ with parameters $\bp$ such that
   $|\bp|=|\ba_1\dots\ba_n|$. We now construct the embedding
   $\Psi_{\bar\sigma}$. Non-uniformly fix a partition
   $\bp=\bp_1,\dots\bp_n$ with $|\bp_i|=|\ba_i|$ and construct the tree of
   tuples of $\B$ as normal. To decide what elements become parameters in
   $\T_\B$, look for the lexicographical least elements $\tau_i$ coding the
   $\bp_i$ such that the finite substructure of $\T_\B$ described by $\tau$ is
   isomorphic to the one described by $\sigma$ in $\T_\A$. It is not difficult to check that this is a Turing computable
   embedding. Now, for any formula $\psi$ in the language of labeled trees, any
   $\B$ in the class of $\A$ and any $\bar\sigma=(\bb_0\dots\bb_n)\in \T_\B$ by
   the pull-back theorem for $\Psi_{\bar\sigma}$ there is $\psi^*$ of the same
   complexity as $\psi$ such that $\B\models \psi^*(\bb_0\dots\bb_n)$ if and
   only if $\T_\B\models \psi(\bar\sigma)$. As the image of $\Psi_{\bar\sigma}$
   contains the parameterized tree $(\T_\A,\bar\sigma)$, this proves
   \cref{eq:pullback}.
\end{proof}
\cref{lemma:translation} might suggest that the tree of tuples reduction
preserves Scott sentence complexity. However, as we will see, it does not.
The reason for this is the following. Let $K$ be a class of structures and look at $T(K)$, the set of trees-of-tuples
of the elements of $K$, then within $T(K)$, Scott sentences are preserved. However,
$T(K)$ is not a Borel subset of the class of labeled trees and thus the
Scott sentence complexity of $\T_\A$ is not a priori connected to the Scott
sentence complexity of $\A$.
\begin{proposition}\label{prop:treeofstructuresbf}
Given
$\bar\sigma=(\ba_1,\dots,\ba_n),\bar\tau=(\bb_1,\dots,\bb_n)\in \T_\A$, if
$\bar\sigma\leq_\alpha \bar\tau$, then for
each $i<n$, $\ba_i\leq_\alpha \bb_i$.
\end{proposition}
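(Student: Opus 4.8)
The plan is to reduce the proposition to a single-formula transfer argument by using the standard characterization of the back-and-forth relations via infinitary types: for tuples $\bar c,\bar d$ in any structure $\mathcal M$, one has $\bar c\leq_\alpha \bar d$ if and only if every $\Pinf{\alpha}$ formula satisfied by $\bar c$ is also satisfied by $\bar d$ (this is the characterization underlying \cref{lem:bfandpartitions}, as witnessed by the $\leq_1$ case there). Consequently, to establish $\ba_i\leq_\alpha \bb_i$ in $\A$ it suffices to fix an arbitrary $\Pinf{\alpha}$ $\tau$-formula $\phi$ with $\A\models\phi(\ba_i)$ and deduce $\A\models\phi(\bb_i)$. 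The entire argument is then the trip of this one formula up to $\T_\A$, across the hypothesis on the $n$-tuples, and back down to $\A$.

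First I would apply \cref{eq:pushforward} of \cref{lemma:translation} to $\phi$ to obtain a $\Pinf{\alpha}$ labeled-tree formula $T_\phi(x)$ satisfying, for every tuple $\bar c$ of $\A$ and every node $\sigma\succeq \bar c$ of $\T_\A$, the equivalence $\A\models\phi(\bar c)\iff \T_\A\models T_\phi(\sigma)$. Reading $\ba_i$ at once as a node of $\T_\A$ and as the $\A$-tuple it codes, and taking $\sigma=\ba_i$ so that $\sigma\succeq\ba_i$ holds reflexively, this yields $\A\models\phi(\ba_i)\iff \T_\A\models T_\phi(\ba_i)$. Next I would promote $T_\phi$ to a formula in $n$ free variables by setting $\Phi(x_1,\dots,x_n):=T_\phi(x_i)$; since $T_\phi$ is $\Pinf{\alpha}$ so is $\Phi$, and by the preceding equivalence together with $\A\models\phi(\ba_i)$ we get $\T_\A\models\Phi(\bar\sigma)$.

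Now the hypothesis $\bar\sigma\leq_\alpha\bar\tau$ in $\T_\A$ applies directly: as $\Phi$ is a $\Pinf{\alpha}$ formula true of $\bar\sigma$, it is true of $\bar\tau$, that is, $\T_\A\models T_\phi(\bb_i)$. Invoking the equivalence of \cref{eq:pushforward} a second time, now at the node $\bb_i$, gives $\A\models\phi(\bb_i)$. Since $\phi$ was an arbitrary $\Pinf{\alpha}$ formula satisfied by $\ba_i$, this shows $\ba_i\leq_\alpha\bb_i$, and as $i<n$ was arbitrary the proposition follows.

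The argument is short precisely because \cref{lemma:translation} does the heavy lifting, so the only point needing care is the identification, implicit in the notation $\bar\sigma=(\ba_1,\dots,\ba_n)\in\T_\A$, of each coordinate $\ba_i$ with both a node of $\T_\A$ and the $\A$-tuple it codes. I would check explicitly that the translation equivalence is genuinely available at these nodes, which amounts to the fact that $T_\phi$ reads off only the label and height at $\ba_i$ for the quantifier-free base case of \cref{eq:pushforward} and extends this through quantifiers by descending in the tree. I would also confirm that $\T_\A\models T_\phi(\ba_i)$ is insensitive to which replicated copy of the node $\ba_i$ is used, which is immediate since replication preserves both labels and subtree isomorphism type; this is the one spot where the construction of $\T_\A$ rather than $T_\A$ must be kept in mind.
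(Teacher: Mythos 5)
Your proof is correct and follows essentially the same route as the paper's: both arguments rest on the characterization of $\leq_\alpha$ via preservation of $\Pinf{\alpha}$ formulas together with the push-forward translation, part \cref{eq:pushforward} of \cref{lemma:translation}, applied at the nodes $\ba_i$ and $\bb_i$. The only difference is presentational---the paper phrases the same one-formula transfer by contraposition, while you run it directly---so there is nothing substantive to distinguish the two.
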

\begin{proof}
We prove this by contraposition.
Assume that for some $i$, $\ba_i\not\leq_\alpha \bb_i$. 
This is the same thing as saying that there is a $\Pinf{\alpha}$ formula $\phi$
with $\A\models\phi(\ba_i)$ and $\A\models\lnot\phi(\bb_i)$.
Using the first part of \cref{lemma:translation} we get that $\T_\A\models
T_\phi(\ba_i)$ and $\T_\A\models\lnot T_\phi(\bb_i)$.
It immediately follows that $\bar\sigma\not\leq_\alpha \bar\tau$ as desired.
\end{proof}
\begin{theorem}\label{thm:treeofstructuressr}
For any structure $\A$, $\SR(\A)=\SR(\T_\A)$. 
\end{theorem}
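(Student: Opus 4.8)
The plan is to prove the two inequalities $\SR(\A)\leq \SR(\T_\A)$ and $\SR(\T_\A)\leq \SR(\A)$ separately, in both cases by transferring the $\Sinf{\alpha}$-definability of automorphism orbits across the two structures using \cref{lemma:translation} and \cref{prop:treeofstructuresbf}. The linchpin, which I would establish first, is the following orbit correspondence. For a single node, viewing $\ba$ and $\bb$ as nodes of $\T_\A$: they lie in the same $\T_\A$-orbit if and only if they lie in the same $\A$-orbit. More generally, tuples of nodes $\bar\sigma=(\ba_1,\dots,\ba_n)$ and $\bar\tau=(\bb_1,\dots,\bb_n)$ are $\T_\A$-automorphic if and only if they realize the same quantifier-free type in the language of labeled trees and $\ba_i$ is $\A$-automorphic to $\bb_i$ for every $i\leq n$.

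For $\SR(\A)\leq\SR(\T_\A)$, fix $\ba\in\A$ and view it as a node of $\T_\A$. Writing $\alpha=\SR(\T_\A)$, its $\T_\A$-orbit is $\Sinf{\alpha}$-definable by some tree formula $\psi$. Applying the dual of the pull-back clause \cref{eq:pullback} of \cref{lemma:translation} produces a $\Sinf{\alpha}$ $\tau$-formula $\psi^*$ with $\A\models\psi^*(\bb)\iff \T_\A\models\psi(\bb)$ for every node $\bb$. By the orbit correspondence this reads $\A\models\psi^*(\bb)\iff \bb$ is $\A$-automorphic to $\ba$, so $\psi^*$ defines $\mathrm{orbit}(\ba)$; hence every $\A$-orbit is $\Sinf{\alpha}$-definable and $\SR(\A)\leq\alpha$. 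The only nontrivial input is the forward direction of the correspondence, which I obtain from \cref{prop:treeofstructuresbf}: if two nodes are $\T_\A$-automorphic then $\ba\leq_\beta\bb$ and $\bb\leq_\beta\ba$ for all $\beta$, and since $\A$ is countable this forces $\ba$ and $\bb$ into the same $\A$-orbit.

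For the reverse inequality set $\alpha=\SR(\A)$ and fix a tuple $\bar\sigma=(\ba_1,\dots,\ba_n)$ of nodes of $\T_\A$. By the orbit correspondence its $\T_\A$-orbit is the set of $\bar x$ satisfying the quantifier-free formula $\theta$ that pins down the tree configuration of $\bar\sigma$, conjoined with the requirement that each $x_i$ be $\A$-automorphic to $\ba_i$. Since $\SR(\A)=\alpha$, each $\mathrm{orbit}(\ba_i)$ is defined by a $\Sinf{\alpha}$ formula $\phi_i$, and pushing $\phi_i$ forward via the push-forward clause \cref{eq:pushforward} of \cref{lemma:translation} yields a $\Sinf{\alpha}$ tree formula $T_{\phi_i}$ with $\T_\A\models T_{\phi_i}(x_i)$ exactly when $x_i$ is $\A$-automorphic to $\ba_i$. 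Thus $\mathrm{orbit}(\bar\sigma)$ is defined by $\theta\wedge\bigwedge_{i\leq n}T_{\phi_i}$, which for $\alpha\geq 1$ is again $\Sinf{\alpha}$ by closure of $\Sinf{\alpha}$ under finite conjunctions; the case $\alpha=0$ is trivial. This gives $\SR(\T_\A)\leq\alpha$, and together with the first inequality the theorem follows.

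The main obstacle is the sufficiency direction of the orbit correspondence: that a matching tree configuration together with \emph{componentwise} $\A$-automorphy of the $\ba_i$ already forces $\T_\A$-automorphy of $\bar\sigma$ and $\bar\tau$. Here one must assemble an automorphism of $\T_\A$ from $n$ possibly distinct $\A$-automorphisms, one per component, by sending the pairwise disjoint subtrees above the $\ba_i$ onto the corresponding subtrees above the $\bb_i$ and filling in the remainder. This is exactly where the replication of \cref{def:replabeledtreed} is indispensable, and where it matters that the language of labeled trees cannot express equality between $\A$-elements occurring in incomparable nodes: this is precisely what makes componentwise, rather than joint, automorphy the correct condition, and it is the structural discrepancy that will later be seen to cause the Friedman--Stanley embedding to distort Scott sentence complexity.
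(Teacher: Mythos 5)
Your proposal is correct and follows essentially the same route as the paper's proof: the inequality $\SR(\A)\leq\SR(\T_\A)$ via the pull-back clause of \cref{lemma:translation}, and $\SR(\T_\A)\leq\SR(\A)$ via the push-forward clause together with a quantifier-free description of the tree configuration and the replication property to assemble componentwise $\A$-automorphisms into a $\T_\A$-automorphism. The only difference is organizational: you isolate the orbit correspondence as a lemma and make its forward direction explicit via \cref{prop:treeofstructuresbf} plus countability, a step the paper leaves implicit.
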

\begin{proof}
Let $\A$ be a structure such that $SR(\T_\A)=\gamma$. Consider a tuple $\ba\in \A$. Let
$\psi\in\Sinf{\gamma}$ define the orbit of $\ba\in \T_\A$. By \cref{eq:pullback}
in \cref{lemma:translation} we have for all $\bb\in \A$ \[\A\models
\psi^*(\bb)\iff \T_\A\models \psi(\bb) \iff \bb\cong \ba.\] Therefore, $\psi^*$
defines the orbit of $\ba$ as desired and hence, $\SR(\A)\leq\SR(\T_\A)$. 

We now show $\SR(\T_\A)\leq\SR(\A)=\gamma$. Let $\bar\sigma=\ba_1\dots\ba_n$ be
a tuple in $\T_\A$. The automorphism orbit of each  $\ba_i$ is $\Sinf{\gamma}$
definable in $\A$, thus by \cref{eq:pushforward} in \cref{lemma:translation}
there is a $\Sinf{\gamma}$ formula, $\chi_i$,  true only of elements $\tau$ extending $\sigma$ 
that code elements automorphic to $\ba_i$ in $\A$. Now, let
$\psi$ describe the finite subtree containing precisely the elements of
$\bar\sigma$ as its leaves and let $\phi$ by the conjunction of $\psi$ together
with the $\chi_i$'s. Note that
because the labels encode the height of the labeled element in the tree, $\psi$
can be taken $\Sinf{1}$. Let $\bar \tau=\bb_1\dots\bb_n$ satisfy $\phi$. Then,
for every $\bb_i$, the subtrees $T_{\bb_i}$ rooted at $\bb_i$ are isomorphic to
the subtrees of $\ba_i$ and as the tree $\T_\A$ is replicated we can extend
these isomorphisms to an automorphism mapping $\bar\sigma\to \bar\tau$.
\end{proof}
\cref{thm:treeofstructuressr} can be viewed as a strengthening of a result of
Gao. He showed, using a different definition of Scott rank, that the ranks of
$\A$ and $\T_\A$ can be at most $\omega^2$ apart~\cite{gao2001}. 

We now turn our attention to the paramaterized Scott rank.
Unlike the unparamaterized case, paramaterized Scott rank will not be preserved.
To see this, we first need the following definitions. 
Given a tree $T$ with root $r$ and a tuple $\bar{c}=(c_1,\cdots,c_n)\in T$, we
say that $y\in T$ is \textit{somewhat comparable} to $\bar{c}$ if $\exists z,i ~
z\neq r\land c_i\succeq z \land y\succeq z$.
A tuple is somewhat comparable to $\bar{c}$ if all of its points are.
Similarly, we say that a point is \textit{entirely incomparable} to $\bar{c}$ if it is not somewhat comparable to $\bar{c}$ and that a tuple is entirely incomparable to $\bar{c}$ if all of its points are entirely incomparable to $\bar{c}$.

\begin{lemma}\label{lem:bftos_noparameters}
For $\T_\A$ a tree of tuples, every $\alpha\in\omega_1$ and tuples $\ba$ and $\bb$ that are entirely incomparable to $\bp$,
\[\ba\bp\leq_\alpha\bb\bp \iff \ba\leq_\alpha\bb.\]
\end{lemma}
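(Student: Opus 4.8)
The forward direction is immediate and needs no hypothesis on $\bp$: if $\phi(\bar x)$ is a $\Pinf{\alpha}$ formula with $\A\models\phi(\ba)$, then reading $\phi$ as a formula in the variables of $\ba\bp$ that ignores the variables of $\bp$ gives a $\Pinf{\alpha}$ formula true of $\ba\bp$, hence of $\bb\bp$, hence $\A\models\phi(\bb)$. Thus $\ba\bp\leq_\alpha\bb\bp$ implies $\ba\leq_\alpha\bb$, and the whole content of the lemma lies in the converse.

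For the converse the plan is to prove, by induction on $\alpha$, the slightly more flexible statement $G(\alpha)$: for every tuple $\bp$ occurring identically on both sides and all tuples $\ba,\bb$ entirely incomparable to $\bp$, $\ba\leq_\alpha\bb$ implies $\ba\bp\leq_\alpha\bb\bp$. The lemma is the instance where $\bp$ is the original parameter. The point of stating $G(\alpha)$ for an \emph{arbitrary} common tuple $\bp$ is that it is symmetric in the two sides, which is exactly what lets it survive the side-swapping built into the back-and-forth relations. For the inductive step I use the standard recursive characterization of $\leq_\alpha$ (see~\cite{montalban2021}): to establish $\ba\bp\leq_\alpha\bb\bp$ it suffices that for every $\beta<\alpha$ and every extension $\bb\bp\bar e$ of the right-hand tuple there is an extension $\ba\bp\bar f$ of the left-hand tuple with $\bb\bp\bar e\leq_\beta\ba\bp\bar f$. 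I split $\bar e$ using the dichotomy for points: let $\bar e_0$ be the points somewhat comparable to $\bp$ and $\bar e_1$ those entirely incomparable to $\bp$. To the comparable part I respond with the \emph{same} tree elements, $\bar f_0:=\bar e_0$; since $\bp$ is identical on the two sides, so is the enlarged common tuple $\bp\bar e_0$. The incomparable part $\bar e_1$ extends $\bb$ inside the child-subtrees of the root that avoid $\bp$, so I feed it to a strategy witnessing $\ba\leq_\alpha\bb$, obtaining $\bar f_1$ extending $\ba$ with $\bb\bar e_1\leq_\beta\ba\bar f_1$. Setting $\bar f:=\bar f_0\bar f_1$ and applying $G(\beta)$ with the two sides interchanged, common tuple $\bp\bar e_0$, and free parts satisfying $\bb\bar e_1\leq_\beta\ba\bar f_1$, yields $\bb\bp\bar e\leq_\beta\ba\bp\bar f$, as required.

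The one gap in that step, and the main obstacle, is that an abstract strategy for $\ba\leq_\alpha\bb$ may return a witness $\bar f_1$ with points lying in child-subtrees of the root that meet $\bp$, so that $\bar f_1$ fails to be entirely incomparable to $\bp\bar e_0$ and $G(\beta)$ does not yet apply. Here replication (\cref{def:replabeledtreed}) is essential. Every point of $\bar f_1$ that is comparable to $\bp$ is nonetheless entirely incomparable to the free-region tuple $\ba\bb\bar e_1$; since $\bp$ occupies only finitely many child-subtrees of the root while each isomorphism type of child-subtree is realized infinitely often, there are fresh child-subtrees in the free region isomorphic to the offending ones. Swapping each offending child-subtree with such a fresh copy produces an automorphism of $\T_\A$ that fixes $\ba$, $\bb$ and $\bar e_1$ pointwise and carries $\bar f_1$ to a tuple $\bar f_1^{*}$ entirely incomparable to $\bp\bar e_0$; as $\leq_\beta$ is automorphism-invariant and the automorphism fixes $\bb\bar e_1$, we still have $\bb\bar e_1\leq_\beta\ba\bar f_1^{*}$, so replacing $\bar f_1$ by $\bar f_1^{*}$ repairs the invariant needed for $G(\beta)$.

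Finally, the base case $\alpha=0$ is a direct comparison of atomic types. Because $\ba$ and $\bb$ are each entirely incomparable to $\bp$, every cross-pair between the free part and $\bp$ has its meet at the root on both sides, so no atomic relations hold across the two regions. Combining the identical common part with the equal atomic types of the free parts then gives $\ba\bp$ and $\bb\bp$ the same atomic type, which is $\ba\bp\leq_0\bb\bp$. This closes the induction and hence the converse.
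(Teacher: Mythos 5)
Your proof is correct and follows essentially the same route as the paper's: split the $\forall$-player's challenge into the part somewhat comparable to $\bp$ (answered identically) and the part entirely incomparable to $\bp$ (answered via $\ba\leq_\alpha\bb$), repair that response by a replication automorphism fixing the relevant tuples, and apply the inductive hypothesis with the enlarged common tuple and the two sides interchanged. The only substantive difference is presentational: the paper compresses the automorphism repair into a single ``without loss of generality'' clause and treats the limit case separately, whereas you spell out the repair and absorb limits into the uniform recursive characterization of $\leq_\alpha$.
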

\begin{proof}
We go by transfinite induction on $\alpha$.
The $\alpha=0$ case is immediate, as is the limit case.
Assume the result for $\alpha=\beta$.
If $\ba\bp\leq_{\beta+1}\bb\bp$, then $\ba\leq_{\beta+1}\bb$ follows immediately.
Now assume that $\ba\leq_{\beta+1}\bb$.
We wish to show that $\ba\bp\leq_{\beta+1}\bb\bp$.
Consider some extension $\bc$.
We can split $\bc$ into the points somewhat comparable to $\bp$, named $\bc_p$, and the points entirely incomparable to $\bp$, named $\bc_i$ (one of these may be the empty tuple).
Because $\ba\leq_{\beta+1}\bb$, there is a tuple $\bd_i$ such that $\ba\bd_i\geq_{\beta}\bb\bc_i$.
Without loss of generality (using an automorphism that fixes $\ba$ and moves $\bp$ to something entirely incomparable if needed), $\bd_i$ is entirely incomparable to $\bp$. 
Indeed, as somewhat comparability is an equivalence relation, we can note that $\ba\bd_i$ and $\bb\bc_i$ are entirely incomparable with $\bp\bc_p$.
In particular, the induction hypothesis gives that
\[\ba\bd_i\bp\bc_p\geq_{\beta}\bb\bc_i\bp\bc_p.\]
Rearranging the tuples gives that
\[\ba\bp\bd_i\bc_p\geq_{\beta}\bb\bp\bc\bc_p\]
which shows that $\ba\bp\leq_{\beta+1}\bb\bp$, as desired.
\end{proof}
\cref{lem:bftos_noparameters} can also be proved using a general result of
Steel~\cite{steel1978}.
\begin{lemma}
For every tree of tuples $\T_\A$, $\SR(\T_\A)=\SR_p(\T_\A)$.
\end{lemma}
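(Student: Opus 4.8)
The plan is to prove the two inequalities separately. The inequality $\SR_p(\T_\A)\le \SR(\T_\A)$ is immediate from the definition of the parameterized Scott rank: the empty parameter gives $\SR((\T_\A,\emptyset))=\SR(\T_\A)$, and $\SR_p(\T_\A)$ is the least value of $\SR((\T_\A,\bp))$ over finite $\bp$. So the real content is the reverse inequality $\SR(\T_\A)\le \SR_p(\T_\A)$, i.e. that parameters cannot lower the Scott rank of a tree of tuples. I would fix a parameter $\bp$ witnessing $\SR_p(\T_\A)=\gamma$, so that $\SR((\T_\A,\bp))=\gamma$, and use the standard back-and-forth characterization of Scott rank (see~\cite{montalban2021}): a structure $\B$ satisfies $\SR(\B)\le\gamma$ if and only if for all tuples $\bar c,\bar d$ of $\B$, $\bar c\equiv_\gamma \bar d$ implies that $\bar c$ and $\bar d$ lie in the same automorphism orbit. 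Thus it suffices to show that any $\ba,\bb\in \T_\A$ with $\ba\equiv_\gamma\bb$ satisfy $\ba\cong\bb$ in $\T_\A$.

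First I would reduce to the case where $\ba$ and $\bb$ are each \emph{entirely incomparable} to $\bp$. Since $\T_\A$ replicates every branch infinitely often, the root carries infinitely many isomorphic copies of each of its principal subtrees, whereas $\bp$ meets only finitely many of them. Hence each point of $\ba$ can be carried, by an automorphism of $\T_\A$ that permutes isomorphic copies of principal subtrees, into a fresh copy sharing no non-root ancestor with $\bp$; and because any two comparable points of $\ba$ lie in a common principal subtree, such an automorphism can be chosen to preserve the internal order structure of $\ba$. Picking automorphisms $f,g$ of $\T_\A$ with $f(\ba)$ and $g(\bb)$ entirely incomparable to $\bp$, and using that automorphisms preserve all $L_{\omega_1\omega}$ formulas (hence preserve $\equiv_\gamma$ and automorphism orbits), I get $f(\ba)\equiv_\gamma\ba\equiv_\gamma\bb\equiv_\gamma g(\bb)$, so it is enough to prove $f(\ba)\cong g(\bb)$. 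Renaming, I may assume $\ba$ and $\bb$ are themselves entirely incomparable to $\bp$.

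Now I would invoke \cref{lem:bftos_noparameters}. Since $\ba,\bb$ are entirely incomparable to $\bp$ and $\ba\equiv_\gamma\bb$, the lemma gives $\ba\bp\equiv_\gamma\bb\bp$; equivalently, $\ba$ and $\bb$ are $\equiv_\gamma$ as tuples of the expanded structure $(\T_\A,\bp)$. Note that the biconditional form of \cref{lem:bftos_noparameters} is exactly what lets the symmetric relation $\equiv_\gamma$ transfer across the parameter. Because $\SR((\T_\A,\bp))=\gamma$, the back-and-forth characterization applied to $(\T_\A,\bp)$ yields an automorphism of $(\T_\A,\bp)$ — that is, an automorphism of $\T_\A$ fixing $\bp$ — sending $\ba$ to $\bb$. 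In particular $\ba\cong\bb$ in $\T_\A$, which establishes $\SR(\T_\A)\le\gamma=\SR_p(\T_\A)$ and, with the easy inequality, completes the proof.

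The hard part will be the reduction in the second paragraph — relocating an arbitrary tuple off $\bp$ by a single automorphism — since this is the only step that genuinely uses the replicated combinatorics of $\T_\A$ rather than its abstract back-and-forth data; everything else is bookkeeping around \cref{lem:bftos_noparameters} and the characterization of Scott rank. The precise point to verify carefully is that one automorphism can simultaneously move every point of $\ba$ into fresh principal subtrees disjoint from $\bp$ while respecting exactly which points of $\ba$ are comparable to one another.
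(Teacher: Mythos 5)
Your proof has a genuine gap at the step you treat as routine bookkeeping: the ``standard back-and-forth characterization of Scott rank'' you invoke is false as stated. It is not true that $\SR(\B)\le\gamma$ if and only if any two tuples with $\bar c\equiv_\gamma \bar d$ are automorphic; only the forward direction holds. The paper's definition of $\SR$ requires every orbit to be $\Sinf{\gamma}$-definable, and this is equivalent to the \emph{asymmetric} condition that every $\bb$ satisfying all $\Sinf{\gamma}$ formulas true of $\ba$ (i.e.\ $\bb\le_\gamma\ba$ in the Ash--Knight convention) already lies in the orbit of $\ba$; the orbit is then defined by the canonical $\Sinf{\gamma}$ formula defining the cone $\{\bb:\bb\le_\gamma\ba\}$. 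The symmetric condition is strictly weaker. Concretely, take $\B=\omega$ and $\gamma=1$: distinct ordered tuples of $\omega$ are never $\equiv_1$ (corresponding finite intervals must have equal size), so ``$\equiv_1$ implies automorphic'' holds vacuously, yet $\SR(\omega)=2$, since any $\Sinf{1}$ formula true of the least element is preserved by the shift self-embedding $n\mapsto n+1$ and hence true of $1$, so the orbit of $0$ is not $\Sinf{1}$-definable. Your final step --- concluding $\SR(\T_\A)\le\gamma$ from ``$\ba\equiv_\gamma\bb$ implies $\ba\cong\bb$ in $\T_\A$'' --- uses exactly the false direction, so the proof as written does not establish the lemma.

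The argument is repairable, and once repaired it is essentially the paper's proof in dual form. Run your argument with the asymmetric relation: given an arbitrary $\ba$ and any $\bb$ with $\bb\le_\gamma\ba$, your relocation step (which is correct, and mirrors the paper's ``without loss of generality'' step of replacing $\bp$ by an automorphic copy) moves both tuples off $\bp$; \cref{lem:bftos_noparameters} is stated as a biconditional for $\le_\alpha$, not just for $\equiv_\alpha$, so it gives $\bb\bp\le_\gamma\ba\bp$; and since the orbit of $\ba$ in $(\T_\A,\bp)$ is $\Sinf{\gamma}$-definable, it follows that $\bb$ is automorphic to $\ba$ over $\bp$, hence in $\T_\A$. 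This shows every orbit of $\T_\A$ coincides with a $\le_\gamma$-cone and is therefore $\Sinf{\gamma}$-definable. The paper packages the same content contrapositively via $\alpha$-freeness (an $\alpha$-free tuple of $\T_\A$ remains $\alpha$-free in $(\T_\A,\bp)$), which avoids ever invoking a characterization of $\SR$ in terms of the symmetric relation $\equiv_\gamma$.
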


\begin{proof}
Consider a tuple $\ba$ that is $\alpha$-free in $\T_\A$.
Without loss of generality (by picking parameters automorphic to $\bp$) we have that $\ba$ is entirely incomparable with $\bp$.
We demonstrate that $\ba$ is also $\alpha$-free in $(\T_\A,\bp)$.

Consider $\bb\in \T_\A$ and $\beta<\alpha$.
We wish to show that there are $\ba',\bb'$ such that
$\ba\bb\bp\leq_\beta\ba'\bb'\bp$
and
$\ba\bp\not\leq_\alpha\ba'\bp.$

We can split $\bb$ into the points somewhat comparable to $\bp$, named $\bb_p$, and the points entirely incomparable to $\bp$, named $\bb_i$ (one of these may be the empty tuple).
Using the $\alpha$-freeness of $\ba$ in $\T_\A$, we may find $\ba'$ and $\bb_i'$ (entirely incomparable to $p$ via an automorphism fixing $\ba$) such that
$\ba\bb_i\leq_\beta\ba'\bb_i'$
and $\ba\not\leq_\alpha\ba'.$

It follows from the previous lemma that
$\ba\bb_i\bp\bb_p\leq_\beta\ba'\bb_i'\bp\bb_p$
and
$\ba\bp\not\leq_\alpha\ba'\bp.$
Rearranging the tuples we get that
$\ba\bb\bp\leq_\beta\ba'\bb'\bp$
and
$\ba\bp\not\leq_\alpha\ba'\bp.$
Therefore, $\ba$ is also $\alpha$-free in $(\T_\A,\bp)$.

In particular, if there are no $\alpha$-free tuples in $(\T_\A,\bp)$ then there
are no $\alpha$-free tuples in $\T_\A$. 
Hence, $\SR(\T_\A)=\SR_p(\T_\A)$. \end{proof}
The previous two results can be used to understand the effect of the
Friedman-Stanley embedding on Scott sentence complexity in all but the limit
cases. We use the following new technique to deal with these cases.

\begin{definition}
For a structure $\A$ and a limit ordinal $\lambda$, a
\textit{$\lambda$-sequence} in $\A$ is a set of tuples $\by_i\in \A$ for
$i\in\omega$ such that $\by_i\equiv_{\alpha_i}\by_{i+1}$ for some fundamental
sequence $(\alpha_i)_{i\in\omega}$ for $\lambda$.  We say that a $\lambda$-sequence is \textit{unstable} if $y_i\not\equiv_{\alpha_{i+1}}y_{i+1}$.
\end{definition}

\begin{lemma}
Consider a structure $\A$ with $\SR(\A)=\lambda$ for $\lambda$ a limit ordinal. The Scott sentence complexity
of $\A$ is $\Pinf{\lambda}$ if and only if there are no unstable
$\lambda$-sequences in $\A$.
\end{lemma}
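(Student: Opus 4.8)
The plan is to reduce the statement to a clean question about $\Pinf{\lambda}$ Scott sentences. Since $\SR(\A)=\lambda$, all automorphism orbits of $\A$ are $\Sinf{\lambda}$-definable, so the standard Scott-sentence construction already yields a $\Pinf{\lambda+1}$ Scott sentence for $\A$; hence $SSC(\A)=\Pinf{\lambda}$ holds precisely when $\A$ actually has a $\Pinf{\lambda}$ Scott sentence. I would test this through the standard criterion: the $\Pinf{\lambda}$-theory of $\A$ is a Scott sentence if and only if every countable $\B$ with $\A\leq_{\lambda}\B$ (equivalently, $\A\leq_{\alpha_i}\B$ for every term of a fixed fundamental sequence $(\alpha_i)$ of $\lambda$) is isomorphic to $\A$. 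The basic transfer tool I would establish first is a realization lemma: if $\A\equiv_{\alpha_i}\B$ for all $i$, then for each $\gamma<\lambda$ every $\gamma$-type realized in $\B$ is realized in $\A$ and conversely, since $\gamma+n<\lambda$ lets one match an $n$-tuple by $n$ steps of back-and-forth starting at level $\gamma+n$.

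For the direction ``no unstable $\lambda$-sequence $\Rightarrow SSC=\Pinf{\lambda}$'' I argue by contradiction. Take $\B\not\cong\A$ with $\A\leq_{\lambda}\B$ and attempt a level-$\lambda$ back-and-forth maintaining $\ba\equiv_{\alpha_i}\bb$ for all $i$. This relation holds of the empty tuples; were it a back-and-forth system it would give an isomorphism, so it fails the extension property: there are $\ba,\bb$ and a tuple $\bc\in\A$ extending $\ba$ for which no extension $\bb\bd$ of $\bb$ satisfies $\bc\equiv_{\alpha_i}\bb\bd$ for all $i$. The sets $D_i=\{\bd:\bc\equiv_{\alpha_i}\bb\bd\}$ then form a decreasing chain of nonempty sets (each $D_i\neq\emptyset$ comes from $\ba\equiv_{\alpha_i+1}\bb$) with empty intersection, so $D_i\supsetneq D_{i+1}$ infinitely often; choosing $\bd_i\in D_i\setminus D_{i+1}$ and passing to the corresponding subsequence of $(\alpha_i)$, the tuples $\by_i:=\bb\bd_i$ satisfy $\by_i\equiv_{\alpha_i}\by_{i+1}$ and $\by_i\not\equiv_{\alpha_{i+1}}\by_{i+1}$, an unstable $\lambda$-sequence in $\B$. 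The realization lemma pulls it back to an unstable $\lambda$-sequence in $\A$, the desired contradiction. The only subtle point is that $\A\leq_{\lambda}\B$ is one-sided; here I use $\SR(\A)=\lambda$ (equivalently, the available $\Pinf{\lambda+1}$ Scott sentence) to upgrade $\A\leq_{\lambda}\B$ to $\A\equiv_{\lambda}\B$, or directly to $\A\leq_{\lambda+1}\B$, before running the argument.

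For the converse I build a witnessing $\B$. Given an unstable sequence $(\by_i)$, the relations $\by_i\equiv_{\alpha_i}\by_j$ for $j\geq i$ show that the $\alpha_i$-types of the $\by_i$ cohere into a single consistent ``limit $\lambda$-type'' $p$, while instability guarantees that no $\by_i$, and indeed no tuple $\alpha_{i+1}$-equivalent to $\by_i$, realizes $p$. I would then construct a countable $\B$ that is $\equiv_{\alpha_i}$-equivalent to $\A$ for every $i$ but realizes $p$ a different number of times than $\A$ does---adding a fresh realization if $p$ is omitted in $\A$, and otherwise altering its multiplicity---via a Henkin/amalgamation construction carried out along $(\by_i)$. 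Instability supplies exactly the needed slack: because the realizations of $p$ restricted to level $\alpha_i$ given by the $\by_i$ do not extend to realizations of $p$ restricted to level $\alpha_{i+1}$, changing the count of genuine $p$-realizations is invisible at every finite level $\alpha_i$, so $\A\leq_{\lambda}\B$ while $\B\not\cong\A$, whence $\A$ has no $\Pinf{\lambda}$ Scott sentence.

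The main obstacle is this last construction of $\B$: producing a single countable structure that is simultaneously $\equiv_{\alpha_i}$-equivalent to $\A$ at every level yet genuinely non-isomorphic to it, and in particular handling the case in which $p$ is already realized in $\A$, where one must change the multiplicity of $p$ rather than merely omit or add it. Getting the amalgamation to preserve all lower-level types while breaking the top-level behaviour is the delicate heart of the proof, and it is precisely here that the instability hypothesis must be used in full force; by comparison, the extraction of an unstable sequence in the first direction is the routine dual of this construction.
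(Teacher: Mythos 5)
Your first direction (no unstable sequence implies a $\Pinf{\lambda}$ Scott sentence) is essentially correct and close in spirit to the paper's argument: the paper takes $\B\equiv_\lambda\A$ with $\B\not\cong\A$, uses $\SR(\A)=\lambda$ to extract a tuple $\bx\in\B$ that is $\lambda$-inequivalent to every tuple of $\A$, and builds the unstable sequence by choosing $\by_i\in\A$ with $\by_i\equiv_{\alpha_i}\bx$; your failed back-and-forth system with the nested sets $D_i$, followed by the pullback along the realization lemma, accomplishes the same thing. (You should also treat the symmetric failure case, where the extension problem occurs on the $\B$-side; it is analogous and in fact easier, since the unstable sequence then lands in $\A$ directly.)

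The genuine gap is in the converse. There, the entire content of the claim is the construction of a countable $\B$ with $\B\equiv_{\alpha_i}\A$ for all $i$ but $\B\not\cong\A$, and your proposal does not carry this out: you describe a ``Henkin/amalgamation construction along $(\by_i)$'' and then explicitly label it the main obstacle and the delicate heart of the proof. That construction is exactly what the paper outsources to a known compactness-type theorem~\cite[Lemma XII.6]{montalban2021a}: given the chain $(\A,\by_i)$ with $(\A,\by_i)\equiv_{\alpha_i}(\A,\by_{i+1})$, there is a limit structure $(\B,\bz)$ with $(\B,\bz)\equiv_{\alpha_i}(\A,\by_i)$ for every $i$. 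This is a nontrivial result about infinitary logic, not a routine amalgamation, and without it (or a proof of it) your second direction remains a plan rather than a proof. Moreover, your route to non-isomorphism --- comparing the number of realizations of the limit type $p$ --- has an unresolved branch: you concede you must ``alter the multiplicity'' of $p$ if $p$ happens to be realized in $\A$, and you give no construction that changes multiplicity while preserving $\equiv_{\alpha_i}$ at every level. In fact that branch is vacuous, though you do not observe this: if some $\bar{w}\in\A$ realized $p$, then since $\SR(\A)=\lambda$ the orbit of $\bar{w}$ is defined by a disjunction of formulas of rank below $\lambda$, so any $\by_i$ with $\alpha_i$ past the rank of the satisfied disjunct would be automorphic to $\bar{w}$, forcing $\by_i\equiv_{\alpha_{i+1}}\by_{i+1}$ and contradicting instability. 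The paper sidesteps the issue entirely: it shows the orbit of $\bz$ in $\B$ is not $\Sinf{\beta}$-definable for any $\beta<\lambda$, by transferring the $\Sinf{2\alpha_i+3}$ sentence $\exists\bx\,(\bx\equiv_{\alpha_i}\bz\land\bx\not\equiv_{\alpha_{i+1}}\bz)$ from $(\A,\by_k)$ with $\alpha_k\geq 2\alpha_i+3$, whence $\SR(\B)>\lambda=\SR(\A)$ and so $\B\not\cong\A$. Some version of these two ingredients --- the limit construction and a clean non-isomorphism argument --- must be supplied before your sketch becomes a proof.
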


\begin{proof}
We start with the right to left implication and argue by contraposition. Assume
$\SR(\A)=\lambda$, yet $\A$ does not have a $\Pinf{\lambda}$ Scott sentence.
Then there is a structure $\B$ such that $\A\equiv_\lambda \B$ yet $\A\not\cong
\B$. Because $\A$ has Scott rank $\lambda$ this is equivalent to saying that
$\A\equiv_\lambda \B$ yet $\A\not\leq_{\lambda+1} \B$. In other words, there is
a tuple $\bx\in \B$ such that for any $\bz\in \A$, $\bz\not\equiv_\lambda \bx$.
However, as $\A\equiv_\lambda \B$, given a fundamental sequence
$(\alpha_i)_{i\in\omega}$ for $\lambda$, for every $i$ there is some $\by_i\in \A$ such that
$\by_i\equiv_{\alpha_i} \bx$. It follows that the $\by_i$ are a
$\lambda$-sequence. Furthermore, given any $i$, there must be some $k>i$ such
that $\by_i\not\equiv_{\alpha_{k}}\by_{k}$. Otherwise, $\by_i\equiv_\lambda \bx$,
a contradiction to the choice of $\bx$. The above $k$ can be taken to equal
$i+1$ if we thin out the sequence by removing intermediate elements between $i$
and $k$. In particular, the sequence $(\by_i)_{i\in\omega}$ can be taken to be unstable.

We now prove the left to right implication, again by contraposition. Consider a
structure $\A$ with an unstable $\lambda$-sequence $(\by_i)_{i\in\omega}$. Consider the sequence of
structures given by $(\A,\by_i)_{i\in\omega}$. By~\cite[Lemma
XII.6]{montalban2021a}, there is a structure $\B$ and tuple $\bz\in \B$ such
that for each $i$, $(\B,\bz)\equiv_{\alpha_i}(\A,\by_i)$. Note that, in
particular, this means that $\B\equiv_\lambda \A$. Furthermore, for each $i$
there is some $k>i$ such that $\alpha_k\geq 2\alpha_i+3$, as the difference
between $2\alpha_i+3$ and $\alpha_i$ is finite. For this $k$, we can observe that 
\[(\A,\by_k)\models \exists \bx \ \bx\not\equiv_{\alpha_{i+1}} \by_k \land
\bx\equiv_{\alpha_i} \by_k.\]
By~\cite[Lemma VI.14]{montalban2021a} this is a
$\Sinf{2\alpha_i+3}$ sentence that is computable in $\A$. As
$(\B,\bz)\equiv_{\alpha_k}(\A,\by_k)$, by choice of $k$,
\[(\B,\bz)\models \exists \bx ~ \bx\not\equiv_{\alpha_{i+1}} \bz \land
\bx\equiv_{\alpha_i} \bz.\]
Now consider the automorphism orbit of $\bz$ inside $\B$. It cannot be defined
by a $\beta$ formula for $\beta<\lambda$, as if we take $\alpha_i>\beta$, by the
above observation, there is a tuple that is not in the automorphism orbit of
$\bz$ that is $\alpha_i$-equivalent to $\bz$. This means that $SR(\B)>\lambda$ so
$\B\not\cong \A$. As $\B\equiv_\lambda \A$, there is no $\Pinf{\lambda}$ Scott sentence for $A$.
\end{proof}

The former lemma be better understood as an internal compactness property that
is present in structures with Scott sentence complexity  $\Pinf\lambda$. For the sake of completeness we include this equivalent formulation here.

\begin{corollary}
Consider a structure $\A$ with $\SR(\A)=\lambda$ for $\lambda$ a limit ordinal. The Scott sentence complexity
of $\A$ is $\Pinf{\lambda}$ if and only if for any fundamental sequence
$(\delta_n)_{n\in\omega}$ for $\lambda$ and corresponding complete $\delta_n$-types $p_n$ with $p_n\subset p_{n+1}$ the following holds
\[\A\models \exists \bx \bigwwedge_{n}\delta_n\text{-tp}(\bx)=p_n \iff \A\models
\bigwwedge_{n}\exists \bx\ \delta_n\text{-tp}(\bx)=p_n.\]
\end{corollary}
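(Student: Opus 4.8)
The plan is to deduce this from the preceding lemma, which already characterizes $SSC(\A)=\Pinf{\lambda}$ by the absence of unstable $\lambda$-sequences. It therefore suffices to show, for $\A$ with $\SR(\A)=\lambda$, that $\A$ has no unstable $\lambda$-sequence if and only if the displayed biconditional holds for \emph{every} fundamental sequence $(\delta_n)_{n}$ and every coherent chain of complete types $p_0\subset p_1\subset\cdots$. I first record the trivial half of the inner biconditional: the left-hand side, a single $\bx$ with $\delta_n\text{-tp}(\bx)=p_n$ for all $n$, always implies the right-hand side, since that one witness serves every $n$. Hence the displayed equivalence really asserts that separate realizability implies simultaneous realizability, and ``compactness holds'' means this implication holds for all such chains. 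I will argue the contrapositives of both directions, i.e.\ that an unstable $\lambda$-sequence exists precisely when some coherent chain is separately but not simultaneously realizable.

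For the direction from failure of compactness to an unstable sequence, I would fix a fundamental sequence $(\delta_n)_{n}$ and a coherent chain $(p_n)_{n}$ with each $p_n$ realized by some $\by_n$ but with no common realizer. Coherence gives $\by_n\equiv_{\delta_n}\by_{n+1}$, so $(\by_n)_{n}$ is already a $\lambda$-sequence, and the task is to thin it to an unstable one exactly as in the proof of the preceding lemma. Since no single tuple realizes all $p_m$, while $\by_n$ does realize $p_m$ for $m\le n$, for each $n$ there is some $m>n$ with $\by_n\not\equiv_{\delta_m}\by_m$; iterating this selection produces indices $n_0<n_1<\cdots$ along which $\by_{n_k}\equiv_{\delta_{n_k}}\by_{n_{k+1}}$ but $\by_{n_k}\not\equiv_{\delta_{n_{k+1}}}\by_{n_{k+1}}$, which is an unstable $\lambda$-sequence for the fundamental sequence $(\delta_{n_k})_{k}$. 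This direction is purely combinatorial and does not use the rank hypothesis.

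For the reverse direction I would start from an unstable $\lambda$-sequence $(\by_n)_{n}$ for a fundamental sequence $(\alpha_n)_{n}$ and set $\delta_n=\alpha_n$ and $p_n=\delta_n\text{-tp}(\by_n)$. Instability is built on $\by_n\equiv_{\alpha_n}\by_{n+1}$, which makes the chain coherent, and each $p_n$ is realized by $\by_n$, so the right-hand side of the biconditional holds. The crux is to rule out a common realizer, and this is the step I expect to be the main obstacle, as it is the only place the hypothesis $\SR(\A)=\lambda$ is used. Suppose some $\bz$ satisfied $\delta_n\text{-tp}(\bz)=p_n$ for all $n$; then $\bz\equiv_{\alpha_n}\by_n$ for every $n$, while $\bz\equiv_{\alpha_{n+1}}\by_{n+1}$ together with instability $\by_n\not\equiv_{\alpha_{n+1}}\by_{n+1}$ forces $\by_n\not\equiv_{\alpha_{n+1}}\bz$, so no $\by_n$ lies in the automorphism orbit of $\bz$. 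Now, because $\lambda$ is a limit ordinal, any $\Sinf{\lambda}$ definition of the orbit of $\bz$ is a disjunction of formulas each of complexity $\Sinf{\gamma}$ for some $\gamma<\lambda$, and $\bz$ satisfies one such disjunct; choosing $n$ with $\alpha_n\ge\gamma$, the equivalence $\by_n\equiv_{\alpha_n}\bz$ propagates that disjunct to $\by_n$ and places $\by_n$ in the orbit of $\bz$, a contradiction. Hence no common realizer exists, the inner biconditional fails for this chain, and compactness fails. Combining the two directions with the preceding lemma yields the corollary.
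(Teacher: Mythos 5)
Your proof is correct and follows essentially the same route as the paper's: both directions reduce the statement to the preceding lemma on unstable $\lambda$-sequences, thinning the sequence of separate realizers to an unstable sequence when no common realizer exists, and using $\SR(\A)=\lambda$ to show a common realizer of the types of an unstable sequence would have an orbit that is not $\Sinf{\lambda}$-definable. Your write-up is in fact slightly more careful than the paper's at the last step (the paper's displayed non-equivalence $\bx\not\equiv_{\delta_{n+1}}\bz_{n+1}$ should read $\bx\not\equiv_{\delta_{n+1}}\bz_{n}$, which is exactly the transitivity argument you spell out), but the underlying argument is the same.
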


\begin{proof}
Say $\A$ has Scott sentence complexity $\Pinf{\lambda}$. If $\A\models 
\bigwwedge_{n}\exists \bx\ \delta_n\text{-tp}(\bx)=p_n$ then, by construction, the witnesses to
$\delta_n\text{-tp}(\bx)=p_n$, call them $\by_n$, form a $\lambda$-sequence. If
$\by_n\not\equiv_{\delta_{k}} \by_{k}$ for $k>n$ on a cofinal subset, by
thinning out the sequence we obtain an unstable sequence, a contradiction.
Therefore, there is some $K$ for which $\by_n\equiv_{\delta_{k}} \by_{k}$ for
all $k>n\geq K$. Then $\by_K$ serves as the needed witness for
$\A\models \exists \bx \bigwwedge_{n}\delta_n\text{-tp}(\bx)=p_n$. 

On the other hand, if $\A$ does not have Scott sentence complexity
$\Pinf{\lambda}$ it must have some unstable sequence, $(z_n)_{n\in\omega}$. If
we take the $\delta_n$-type of $\bz_n$ to be $p_n$, then we know that $\A\models
\bigwwedge_{n}\exists \bx~ \delta_n\text{-tp}(\bx)=p_n$. However, if there is
some witness to $\A\models \exists \bx\ \bigwwedge_{n}\delta_n\text{-tp}(\bx)=p_n$
we have that $\bx\equiv_{\delta_n}\bz_n$ yet
$\bx\not\equiv_{\delta_{n+1}}\bz_{n+1}$. Therefore the orbit of $\bx$ is not
$\Sinf{\lambda}$ definable, a contradiction as $\SR(\A)=\lambda$.
\end{proof}

Note that this cannot be upgraded from complete types to general formulas. In other
words, there may be a sequence of properly $\Pinf{\delta_n}$ formulas
$\phi_n$ such that $\phi_{n+1}\to\phi_n$ and $\A\models \bigwwedge_{n}\exists
\bx~ \phi_n(\bx)$ yet $\A\models \lnot \exists \bx\bigwwedge_{n} \phi_n(\bx)$.
To see this, we need not look past our example of a $\Pinf{\lambda}$ linear
order for $\lambda$ a limit given in \cref{prop:scatteredlimit}. In particular the following formulas are
a counterexample satisfied by this ordering.
\[\phi_n(z):=\exists\bx,\by \bigwedge_{i<n} (x_i,y_i)\cong \Z^{\delta_i}\land y_i<z\]

We now note that unstable sequences transfer between a structure and its tree of tuples.
This will allow us to resolve the final Scott sentence complexity ambiguity.
In order to do this, we first need the following technical lemma.

\begin{lemma}\label{lemma:factoring}
Let $S=(x_1,\cdots,x_n)$ and $T=(y_1,\cdots,y_n)$ be (finite) isomorphic
downward closed subsets of $\T_\A$.
$S\leq_\alpha T$ if and only if for all $i$, $x_i\leq_\alpha y_i$. 
\end{lemma}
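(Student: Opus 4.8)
The plan is to prove both directions, with the forward implication being essentially free and the reverse being the content. For the forward direction, if $S\leq_\alpha T$ then each $x_i\leq_\alpha y_i$ is immediate from the monotonicity of the back-and-forth relation under passing to subtuples, exactly as in \cref{prop:treeofstructuresbf}. The reverse direction I would prove by transfinite induction on $\alpha$, running a back-and-forth argument; the base case ($\alpha\leq 1$, where $\leq_\alpha$ only records the atomic diagram of the tuple and $S,T$ being isomorphic already matches these) and the limit case are routine, so the work is in the successor step, where I assume $x_i\leq_{\beta+1}y_i$ for all $i$ and want $S\leq_{\beta+1}T$.

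For the successor step I would play the game for $S\leq_{\beta+1}T$: the opponent extends $T$ by a tuple $\bd$, and I must produce $\bc$ extending $S$ with $T\bd\leq_\beta S\bc$. Since somewhat comparability is an equivalence relation, every node of $\bd$ has a well-defined deepest ancestor lying in $T$ (or is entirely incomparable to $T$), so $\bd$ splits into pieces $\bd^{(i)}$ hanging below the respective $y_i$ together with a piece entirely incomparable to $T$. To each node I would respond separately: for the piece $\bd^{(i)}$ I apply the relation $x_i\leq_{\beta+1}y_i$ to obtain $\bc_i$ with $y_i\bd^{(i)}\leq_\beta x_i\bc_i$ (for a node the opponent did not touch I take the empty challenge, which by the standard fact that $u\leq_{\beta+1}v$ implies $v\leq_\beta u$ yields $y_i\leq_\beta x_i$ with no new nodes), and the entirely incomparable piece is handled by \cref{lem:bftos_noparameters}, which lets me glue in a matching incomparable response.

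The two observations that make the responses fit together are the following. First, because $\beta\geq 1$, the relation $y_i\bd^{(i)}\leq_\beta x_i\bc_i$ forces the atomic diagrams of $(y_i,\bd^{(i)})$ and $(x_i,\bc_i)$ to agree; since in $\T_\A$ the atomic diagram of a finite configuration is exactly its labelled tree shape, each $\bc_i$ automatically replicates the shape of $\bd^{(i)}$ below $x_i$, so the assembled configuration $S\bc$ is isomorphic, as a labelled downward closed set, to $T\bd$. Second, passing to subtuples in each $y_i\bd^{(i)}\leq_\beta x_i\bc_i$ gives exactly the nodewise relations $y_i\leq_\beta x_i$ and $d\leq_\beta c$ oriented from the $T\bd$-side to the $S\bc$-side. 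Feeding the isomorphic pair $T\bd\cong S\bc$ together with these nodewise relations into the induction hypothesis (\cref{lemma:factoring} at level $\beta$) then yields $T\bd\leq_\beta S\bc$, closing the game. It is worth noting that the direction of the back-and-forth, which reverses at each level, is precisely matched by the orientation $y_i\bd^{(i)}\leq_\beta x_i\bc_i$ that the individual games produce, so no mismatch arises.

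The step I expect to be the main obstacle is the assembly in the previous paragraph: the individual game for $x_i\leq_{\beta+1}y_i$ only controls $\bc_i$ relative to $x_i$, so a priori the responses at different nodes, and the fresh material they introduce, could overlap each other or collide with existing nodes of $S$, destroying both downward closure and the global isomorphism $S\bc\cong T\bd$. Resolving this requires using replication: each subtree into which a response lands has infinitely many isomorphic sibling copies, and an automorphism of $\T_\A$ fixing $S$ can be used to move each $\bc_i$ into a fresh subtree disjoint from $S$ and from the other responses, preserving $\leq_\beta$ since the back-and-forth relation is automorphism invariant. Carrying out this relocation uniformly, so that all pieces land in pairwise disjoint fresh subtrees attached at the correct nodes, is the delicate bookkeeping that the proof must handle, and \cref{lem:bftos_noparameters} is the tool that certifies the entirely incomparable part of this gluing.
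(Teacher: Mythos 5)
Your skeleton (induction on $\alpha$; split the $\forall$-player's move $\bd$ into pieces anchored at the $y_i$; respond componentwise; reassemble and invoke the statement at level $\beta$) runs parallel to the paper's idea, but two steps fail as written. The first is the appeal to the induction hypothesis: the lemma at level $\beta$ applies only to \emph{downward closed} subsets of $\T_\A$, and $T\bd$, $S\bc$ are not downward closed in general --- the $\forall$-player may play a node far below $y_i$ without playing any intermediate ancestor, so your assertion that the assembled configurations are ``isomorphic as labelled downward closed sets'' is simply false. This gap is repairable (respond to the downward closure of $T\bd$ rather than to $T\bd$ itself, extract nodewise $\leq_\beta$ relations for the ancestor nodes as well, apply the induction hypothesis to the closures, and finish by monotonicity), but the repair is not in your write-up and you did not flag the issue.

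The second gap is the one you did flag, and your proposed fix does not work. You correctly observe that the game for $x_i\leq_{\beta+1}y_i$ controls $\bc_i$ only relative to $x_i$, so a response node $c$ may land above some deeper node $x_j\in S$ with $x_j\succ x_i$. But in exactly that case no automorphism of $\T_\A$ fixing $S$ can rescue you: any automorphism fixing $x_j$ maps extensions of $x_j$ to extensions of $x_j$, so $c$ can never be moved ``into a fresh subtree disjoint from $S$'' while $S$ is fixed pointwise. What does work is weaker: you only need the relocating automorphism to fix $x_i$ (automorphism invariance then preserves $y_i\bd^{(i)}\leq_\beta x_i\bc_i$), and by replication $x_i$ has infinitely many isomorphic sibling subtrees above it into which the offending material can be swapped. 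The paper sidesteps both problems at once by arranging disjointness \emph{before} any game is played: it defines the restricted cones $U_S(x_i)$ (extensions of $x_i$ not extending any deeper $x_j$), proves $U(x_i)\leq_\alpha U(y_i)$ by relativizing formulas to the cone above $x_i$, uses replication to get $U_S(x_i)\cong U(x_i)$ and $U_T(y_i)\cong U(y_i)$, and then runs the componentwise winning strategies entirely inside these pairwise non-interacting cones for the whole game --- a single direct argument, with no induction on $\alpha$, in which collisions cannot arise. Your round-by-round induction can likely be pushed through, but only after both of the above repairs are made.
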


\begin{proof}
If $S\leq_\alpha T$, $x_i\leq_\alpha y_i$ for any $i$ follows immediately from the monotonicity of $\leq_\alpha$.

Say that for all $i$, $x_i\leq_\alpha y_i$.
For $z\in \T_\A$ Let $U(z)$ be all elements above $z$ and note that
for all $i$, $U(x_i)\leq_\alpha U(y_i)$.
This holds because any $\Pinf{\alpha}$ formula about $U(x_i)$ can be transformed into a $\Pinf{\alpha}$ formula about $x_i$ by restricting all quantifiers to being above $x_i$.

Let $U_S(x_i) = \{y\in T_A\vert y\geq x_i \land \lnot\exists j ~ x_j>x_i \land y\geq x_j\}$ and define $U_T(y_i)$ analogously.
Because $S$ is downward closed, the $U_S(x_i)$ partition $\T_\A$ into disjoint sets (and similarly for $T$).
Because the branches of $T_A$ are infinitely replicated, we can observe that $U(x_i)\cong U_S(x_i)$ and $U(y_i)\cong U_T(y_i)$.
In particular, this means that for all $i$, $U_S(x_i)\leq_\alpha U_T(y_i)$.

For the sake of contradiction, say that $S\not\leq_\alpha T$.
(Note that $\alpha\neq0$, as $S$ and $T$ are isomorphic.)
This means there is a $\beta<\alpha$ and $\bd\in T_A$ such that for all $\bc\in T_A$
$(S,\bc)\not\geq_\alpha (T,\bd)$.
Write $\bd=(\bd_1\cdots\bd_n)$ when $\bd_i$ is the (possibly empty) subset of
$\bd$ that is in $U_T(y_i)$. For every $i$, let $\bc_i$ be the $\exists$-player's winning response to $\bd_i$ in $U_S(x_i)$ and let $\bc=(\bc_1\cdots \bc_n)$.
This is a valid move because the elements in  $U_T(y_i)$ and $U_T(y_j)$ for distinct $i$ and $j$ are never comparable to each other. 
As the game proceeds, the $\exists$-player may continue to play according to this strategy for the same reason.
When the game reduces to the $0$ level, it is assured that the $\exists$-player has won on all of the $U_S(x_i)$ and  $U_T(y_i)$.
In other words, the relations of the final tuple within these structures is the same.
As observed above, there are never any relations between points in distinct $U_S(x_i)$ or  $U_T(y_i)$.
Therefore, the relations between the structures are also the same.
Thus we have described a winning strategy for the $\exists$-player contradicting
that $S\not\leq_\alpha T$
\end{proof}

\begin{lemma}
A structure $\A$ has an unstable $\lambda$-sequence if and only if its tree of
tuples, $\T_\A$ does.
\end{lemma}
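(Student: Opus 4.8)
The plan is to prove both implications by reducing, in each direction, the $\equiv_\alpha$-behaviour of tuples of nodes in $\T_\A$ to the $\equiv_\alpha$-behaviour of the tuples they code in $\A$. The engine is a single-node bridge: for nodes $x,y\in\T_\A$ coding tuples $\bc,\bc'\in\A$ of the same atomic type, $x\equiv_\alpha y$ in $\T_\A$ if and only if $\bc\equiv_\alpha\bc'$ in $\A$. One direction is exactly \cref{prop:treeofstructuresbf} specialised to $n=1$. For the converse I would argue as in the second half of \cref{thm:treeofstructuressr}: by \cref{eq:pullback} of \cref{lemma:translation} every $\Pinf\alpha$ tree-formula $\psi$ pulls back to a $\Pinf\alpha$ formula $\psi^*$ with $\T_\A\models\psi(z)\iff\A\models\psi^*(\bc_z)$ and, crucially, $\psi^*$ depends only on the atomic type of the parameter node, so it is shared by $x$ and $y$; then $\bc\leq_\alpha\bc'$ pushes $\psi^*(\bc)$ to $\psi^*(\bc')$ and hence $\psi(x)$ to $\psi(y)$. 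Combining the bridge with \cref{lemma:factoring} yields the principle that drives everything: for isomorphic downward closed $S=(x_1,\dots,x_m)$, $T=(y_1,\dots,y_m)$ one has $S\equiv_\alpha T$ iff the coded tuples satisfy $\bc_{x_j}\equiv_\alpha\bc_{y_j}$ for every $j$. In particular, $\equiv_\alpha$ in the tree forgets all relations between elements of $\A$ lying on distinct branches.

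For the forward direction, given an unstable $\lambda$-sequence $(\by_i)$ in $\A$ with fundamental sequence $(\alpha_i)$, I would thin it so that every $\alpha_i\geq 1$; then all $\by_i$ have a common length $k$ and share an atomic type, so the nodes $\sigma_i\in\T_\A$ coding $\by_i$ have isomorphic paths (downward closures) $P_i=(\rho^i_1,\dots,\rho^i_k)$, where $\rho^i_j$ codes the length-$j$ initial segment of $\by_i$. Since $\by_i\equiv_{\alpha_i}\by_{i+1}$ forces the corresponding initial segments of $\by_i$ and $\by_{i+1}$ to be $\equiv_{\alpha_i}$-equivalent for every $j$, the bridge together with \cref{lemma:factoring} gives $P_i\equiv_{\alpha_i}P_{i+1}$. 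For instability, if $P_i\equiv_{\alpha_{i+1}}P_{i+1}$ then \cref{prop:treeofstructuresbf} applied to the top coordinate would give $\by_i\equiv_{\alpha_{i+1}}\by_{i+1}$, contradicting instability; hence $P_i\not\equiv_{\alpha_{i+1}}P_{i+1}$. Thus $(P_i)$ is an unstable $\lambda$-sequence in $\T_\A$.

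For the backward direction, I would start with an unstable $\lambda$-sequence $(\bar\sigma_i)$ of node-tuples in $\T_\A$ and thin so that each $\alpha_i\geq1$ and, by pigeonhole, all $\bar\sigma_i$ have a common length $n$. Because $\bar\sigma_i\equiv_{\alpha_i}\bar\sigma_{i+1}$ implies $\bar\sigma_i\equiv_0\bar\sigma_{i+1}$, consecutive downward closures $\hat{\bar\sigma}_i$ are isomorphic, hence by transitivity all of them are isomorphic to one finite shape with a fixed number $N$ of positions. Using \cref{prop:treeofstructuresbf} together with the fact that $\leq_\alpha$ passes to initial segments, every coded tuple $\bc^i_j$ at position $j\leq N$ satisfies $\bc^i_j\equiv_{\alpha_i}\bc^{i+1}_j$. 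For instability, from $\bar\sigma_i\not\equiv_{\alpha_{i+1}}\bar\sigma_{i+1}$ (hence $\hat{\bar\sigma}_i\not\equiv_{\alpha_{i+1}}\hat{\bar\sigma}_{i+1}$ by monotonicity, as $\bar\sigma_i$ is a subtuple of its closure) the driving principle produces, for each $i$, a position with $\bc^i_j\not\equiv_{\alpha_{i+1}}\bc^{i+1}_j$. Pigeonholing over the finitely many positions, fix $j^*$ witnessing instability for infinitely many $i$, say $i_0<i_1<\cdots$, and set $\bd_k=\bc^{i_k}_{j^*}$ and $\beta_k=\alpha_{i_k}$. Chaining the uniform equivalences gives $\bd_k\equiv_{\beta_k}\bd_{k+1}$, while the witnessing at $i_k$ together with the fact that $\not\equiv_\beta$ is preserved as $\beta$ increases gives $\bd_k\not\equiv_{\beta_{k+1}}\bd_{k+1}$; so $(\bd_k)$ is an unstable $\lambda$-sequence in $\A$.

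The main obstacle is the converse half of the single-node bridge, namely that $\bc\leq_\alpha\bc'$ in $\A$ forces $x\leq_\alpha y$ in $\T_\A$; its real content is the uniformity of the pullback formula $\psi^*$ across nodes of a fixed atomic type, which is precisely what makes the second half of \cref{thm:treeofstructuressr} go through. Once this is in hand, the combination with \cref{lemma:factoring}---the statement that tree-equivalence decouples into coordinatewise coded-equivalence, so that joint instability in $\T_\A$ must be witnessed on a single coordinate---is exactly what makes the backward direction work, since it is otherwise not true that joint instability of the coded tuples follows from componentwise behaviour. The remaining care is purely bookkeeping with the fundamental sequence under the repeated thinning, which is routine.
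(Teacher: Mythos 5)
Your proposal is correct and takes essentially the same route as the paper: both directions rest on the translation lemma (your single-node ``bridge'') combined with \cref{lemma:factoring}, passing to downward closures and then pigeonholing/reindexing the fundamental sequence to extract instability at a single coordinate in the tree-to-structure direction. The paper is merely terser where you are explicit---it compresses your pigeonhole and chaining step into ``reducing to some reindexed fundamental subsequence'' and invokes \cref{lemma:translation} wholesale where you spell out the uniformity of the pullback formula across nodes of a fixed atomic type.
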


\begin{proof}
   Consider an unstable $\lambda$-sequence in $\A$ given by $(\by_k)_{k\in\omega}$. 
Say each tuple is coded by the element $m_k\in \T_\A$, then \cref{lemma:translation} gives us at once that $m_k$ is an unstable $\lambda$-sequence. 

On the other hand, consider an unstable $\lambda$-sequence in $\T_\A$ given by the fundamental
sequence $(\delta_k)_{k\in\omega}$ and associated tuples $(\bz_k)_{i\in\omega}$. 
Let $S_k$ denote the finite tree of elements less than or equal to some point in
the tuple, $\bz_k$.
Note that the isomorphism type of $S_k$ is fixed so long as we have without loss of generality that $\delta_k>1$ for all $k$.
By the definition of unstable $\lambda$-sequences we have that
\[\bz_{k}\equiv_{\delta_{k}}\bz_{k+1} \text{  and  }
\bz_{k}\not\equiv_{\delta_{k+1}}\bz_{k+1}.\]
By picking all of the elements in $S_k$ or $S_{k+1}$ as the first move of a back-and-forth game, we obtain that
\[S_{k}\equiv_{\delta_{k-1}}S_{k+1} \text{  and  }
S_{k}\not\equiv_{\delta_{k}}S_{k+1}.\]
It follows from \cref{lemma:factoring} that by reducing to some reindexed
fundamental subsequence of $(\delta_k)_{k\in\omega}$, say $(\gamma_k)_{k\in\omega}$, we have that for some sequence of points $x_k \in S_k$
\[x_{k}\equiv_{\gamma_{k}}x_{k+1} \text{  and  }
x_{k}\not\equiv_{\gamma_{k+1}}x_{k+1}.\]
By \cref{lemma:translation}, the $\by_k$ coded by the $x_k$, satisfy 
\[\by_{k}\equiv_{\gamma_{k}}\by_{k+1} \text{  and  }
\by_{k}\not\equiv_{\gamma_{k+1}}\by_{k+1}.\]
Therefore, there is an unstable $\lambda$-sequence in $\A$.
Together, we obtain the desired result.
\end{proof}

\begin{corollary}\label{cor:fs_limitssc}
$A$ has Scott sentence complexity $\Pi_\lambda^{in}$ if and only if $T_A$ has Scott sentence complexity $\Pi_\lambda^{in}$.
\end{corollary}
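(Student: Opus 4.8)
The plan is to chain together the three preceding results: the rank equality $\SR(\A)=\SR(\T_\A)$ from \cref{thm:treeofstructuressr}, the characterization of $\Pinf{\lambda}$ complexity by the absence of unstable $\lambda$-sequences, and the transfer of unstable $\lambda$-sequences between $\A$ and $\T_\A$. The one point requiring care is that the characterization lemma is stated conditionally on the hypothesis $\SR(\A)=\lambda$, so I first want to upgrade it to an unconditional biconditional before assembling the pieces.

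First I would observe that, for $\lambda$ a limit ordinal, $\A$ has Scott sentence complexity $\Pinf{\lambda}$ if and only if both $\SR(\A)=\lambda$ and $\A$ has no unstable $\lambda$-sequence. For the forward direction I would read off \cref{table:invariants}: in the limit block the only entries are $\Sinf{\lambda+1}$, which has Scott rank $\lambda+1$, and $\Pinf{\lambda}$, which has Scott rank $\lambda$, so $SSC(\A)=\Pinf{\lambda}$ forces $\SR(\A)=\lambda$; the characterization lemma then supplies the absence of unstable $\lambda$-sequences. The backward direction is exactly the characterization lemma applied under its hypothesis $\SR(\A)=\lambda$.

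With this biconditional in hand the corollary is immediate. Applying the same biconditional to $\T_\A$ gives that $\T_\A$ has complexity $\Pinf{\lambda}$ if and only if $\SR(\T_\A)=\lambda$ and $\T_\A$ has no unstable $\lambda$-sequence. Now $\SR(\A)=\SR(\T_\A)$ by \cref{thm:treeofstructuressr}, so the two rank conditions coincide, and the transfer lemma says $\A$ has an unstable $\lambda$-sequence exactly when $\T_\A$ does, so the two sequence conditions coincide as well. Hence the characterizations for $\A$ and for $\T_\A$ hold simultaneously, which is precisely the claimed equivalence $SSC(\A)=\Pinf{\lambda}\iff SSC(\T_\A)=\Pinf{\lambda}$.

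I do not expect a genuine obstacle here: the mathematical content lives entirely in the three cited results and the proof is essentially bookkeeping. The only place where one could slip is the forward direction of the biconditional, where one must explicitly invoke \cref{table:invariants} to guarantee $\SR(\A)=\lambda$ rather than taking it for granted, since a priori the statement $SSC(\A)=\Pinf{\lambda}$ concerns Scott sentences and is not immediately a statement about Scott rank; getting this hypothesis matched on both sides is what lets the rank-preservation theorem do its work.
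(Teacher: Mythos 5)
Your proof is correct and follows exactly the route the paper intends for this corollary: combining $\SR(\A)=\SR(\T_\A)$ (\cref{thm:treeofstructuressr}), the characterization of $\Pinf{\lambda}$ Scott sentence complexity via absence of unstable $\lambda$-sequences, and the lemma transferring unstable $\lambda$-sequences between $\A$ and $\T_\A$. Your explicit use of \cref{table:invariants} to pin down $\SR(\A)=\lambda$ from $SSC(\A)=\Pinf{\lambda}$ is precisely the bookkeeping the paper leaves implicit, so there is nothing to add.
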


\begin{corollary}
   If $\A$ has Scott sentence complexity $\Gamma_\beta^{\mathrm{in}}$,
   $\T_\A$ has Scott sentence complexity $\Pinf{\alpha}$ where $\alpha=\beta$ if $\Gamma=\Pi$ and $\alpha=\beta+1$ otherwise.
\end{corollary}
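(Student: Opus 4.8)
The plan is to read everything off Table~\ref{table:invariants} once we pin down two facts about $\T_\A$: that its parameterized and parameterless Scott ranks agree, and where it falls in the $\Pinf{\lambda}$ dichotomy. First I would invoke the lemma stating that $\SR(\T_\A)=\pSR(\T_\A)$. Scanning Table~\ref{table:invariants}, the only two rows in which $\pSR$ and $\SR$ coincide are those with $SSC=\Pinf{\alpha+1}$ and (for $\alpha$ limit) $SSC=\Pinf{\alpha}$; in every other row one has $\pSR<\SR$. Hence $SSC(\T_\A)$ is necessarily of $\Pi$-type, and the whole problem reduces to identifying its index.

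Next I set $\gamma=\SR(\A)=\SR(\T_\A)$, using \cref{thm:treeofstructuressr}. If $\gamma$ is $0$ or a successor, the table leaves no ambiguity: the unique $\Pi$-type complexity compatible with $\SR=\gamma$ is $\Pinf{\gamma+1}$ (the $\Pinf{\alpha}$ row requires $\alpha$ limit), so $SSC(\T_\A)=\Pinf{\gamma+1}$. If $\gamma$ is a limit, the table permits either $\Pinf{\gamma}$ or $\Pinf{\gamma+1}$, and the lemma characterizing $\Pinf{\lambda}$ complexity via unstable sequences selects $\Pinf{\gamma}$ precisely when $\T_\A$ has no unstable $\gamma$-sequence. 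By the transfer lemma for unstable sequences this holds iff $\A$ has no unstable $\gamma$-sequence, which, since $\SR(\A)=\gamma$, is the same as $SSC(\A)=\Pinf{\gamma}$; this is exactly \cref{cor:fs_limitssc}.

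Finally I would run through the rows according to $SSC(\A)=\Gamma_\beta^{\mathrm{in}}$. When $\Gamma=\Pi$: if $\beta$ is a limit then $\gamma=\beta$ and \cref{cor:fs_limitssc} gives $SSC(\T_\A)=\Pinf{\beta}$; if $\beta=\delta+1$ then the $\Pinf{\alpha+1}$ row forces $\gamma=\delta$, and either $\delta$ is $0$ or a successor (so directly $SSC(\T_\A)=\Pinf{\delta+1}=\Pinf{\beta}$) or $\delta$ is a limit, in which case $SSC(\A)=\Pinf{\delta+1}\neq\Pinf{\delta}$ forces an unstable $\delta$-sequence in $\A$, hence in $\T_\A$, hence $SSC(\T_\A)=\Pinf{\delta+1}=\Pinf{\beta}$. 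Thus $\alpha=\beta$. When $\Gamma\neq\Pi$, every relevant row---$\Sinf{\alpha+2}$, $\dSinf{\alpha+1}$, $\Sinf{\alpha+1}$---has successor $\SR$, so $\gamma$ is a successor and $SSC(\T_\A)=\Pinf{\gamma+1}$; a one-line check of each row ($\gamma=\alpha+2$ or $\gamma=\alpha+1$, against $\beta=\alpha+2$ or $\beta=\alpha+1$) yields $\gamma+1=\beta+1$, so $\alpha=\beta+1$.

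The step I expect to be the main obstacle is not this final bookkeeping but the case in which $\SR(\A)$ is a limit while $SSC(\A)$ is the successor-type $\Pinf{\delta+1}$. There the table alone cannot separate $\Pinf{\delta}$ from $\Pinf{\delta+1}$ for $\T_\A$, and the resolution genuinely depends on the unstable-sequence dichotomy together with its transfer between $\A$ and $\T_\A$; this is precisely what the preceding lemmas were set up to supply. Everything else reduces to consulting Table~\ref{table:invariants}.
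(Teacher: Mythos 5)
Your proposal is correct and takes essentially the same approach as the paper: it combines $\SR(\A)=\SR(\T_\A)$ (\cref{thm:treeofstructuressr}), the lemma $\SR(\T_\A)=\SR_p(\T_\A)$ together with \cref{table:invariants} to force a $\Pi$-type Scott sentence complexity, and \cref{cor:fs_limitssc} to settle the limit-rank ambiguity. The paper's own proof is just terser, splitting directly on whether $\SR(\A)=\SR(\T_\A)$ is a limit ordinal and leaving your row-by-row bookkeeping implicit.
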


\begin{proof}
If $\SR(\A)=\SR(\T_\A)$ is not a limit ordinal, then it follows from
$\SR(\T_\A)=\SR_p(\T_\A)$ that $\T_\A$ has $\Pinf{\SR(\A)+1}$ Scott sentence
complexity.

If $\SR(\A)=\SR(\T_\A)=\lambda$ is a limit ordinal, then by \cref{cor:fs_limitssc}
$SSC(\A)=\Pinf{\lambda}$ if and only if $SSC(\T_\A)=\Pinf{\lambda}$.
\end{proof}

\subsection{From tree of structures to linear orderings}
We now turn our attention to the second step of the Friedman-Stanley embedding.
Following Friedman and Stanley~\cite{friedman1989} we define the linear ordering
$L(T)$ given a labeled tree as follows.
\begin{definition}\label{def:shufflesum}
   Let $T$ be a labeled tree with labeling function $l_T:T\to\omega$ and
   take the linear ordering $\eta^{<\omega}$, given by the lexicographic order
   on finite strings of elements in $\mathbb Q$. We first define a map
   $f:\eta^{<\omega}\to T$ by recursion as follows: Map the empty string to the
   root of $T$, then assuming we have defined $f(\sigma)$ we define $f$ on
   elements of the form $\sigma\concat q$ for $q\in \eta$ to be a map from
   $\eta$ to $\{\tau: \exists i\ \tau=\sigma\concat i\}$ such that
   $f^{-1}{\tau}$ is dense in $\{\sigma\concat q: q\in\eta\}$. Then, for every
   $\sigma\in T$, if $l(\sigma)=n$, then for every $x\in f^{-1}(\sigma)$,
   replace $x$ by the finite linear ordering of size $n+2$.
   If $T$ is the tree of structures $\T_\A$ of $\A$, then we refer to $L(T)$ as
   $L_\A$.
\end{definition}

\begin{lemma}\label{lem:bf_treetolo}
   For every $\alpha$ and ordered $\ba,\bb\in L_\A$ such that if $a_i$, or $b_i$ are from a labeled
   block, then every element in this block is in $\ba$, respectively $\bb$, $\ba\leq_{3+\alpha}\bb$ if
   and only if $f(\ba)\leq_{1+\alpha} f(\bb)$.
\end{lemma}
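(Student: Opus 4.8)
The plan is to prove the equivalence by transfinite induction on $\alpha$, establishing both directions simultaneously, with the interval-partition characterization of the back-and-forth relations on linear orderings (\cref{lem:bfandpartitions} together with the interval corollary \cite[Lemma 15.7]{ash2000}) as the engine on the $L_\A$ side, and the recursive shuffle-sum structure of \cref{def:shufflesum} on the $\T_\A$ side. The fixed offset of $2$ should come from two sources that I would isolate at the outset: the finite labeled block attached to a node $\sigma$ has size $l(\sigma)+2$ and encodes its label, so recovering a label (an atomic, hence $\leq_1$, fact in the tree) from a block size is a $\leq_2$-level fact in the order; and the lexicographic $\eta^{<\omega}$-shuffle inserts one further dense layer between consecutive tree levels. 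Phrasing the induction as $3+\alpha$ against $1+\alpha$ rather than $2+\alpha$ against $\alpha$ also guarantees that every ordinal appearing on the order side is $\geq 1$, which is exactly what \cref{lem:bfandpartitions} requires.

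Before the induction I would set up the dictionary between the two sides. Given block-complete ordered tuples, the blocks occupy finitely many positions $x_1<\dots<x_m$ in $\eta^{<\omega}$, each a finite discrete interval of size $l(f(x_j))+2$, and $f(\ba)=(f(x_1),\dots,f(x_m))$. Block-completeness is what makes the interval corollary apply cleanly: $\ba\leq_{3+\alpha}\bb$ reduces to matching block sizes (an atomic fact, handled by the $+2$ slack) together with the $\leq_{3+\alpha}$ relation on each open gap between consecutive blocks and at the two ends. The structural heart of the argument is the claim that, by the density built into $f$ (each $f^{-1}(\tau)$ is dense among the children strings), every such open gap is itself a shuffle-sum $L(T')$ of a sub-forest $T'$ of $\T_\A$ determined by $f(x_j)$, $f(x_{j+1})$ and the comparabilities among the chosen nodes. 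This self-similarity is what lets me feed gaps back into the same statement at the next stage of the induction.

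With the dictionary in hand the inductive step becomes a translation of moves. For the backward direction, assuming $f(\ba)\leq_{1+\alpha}f(\bb)$ and given an adversary point $d$ refining the $\bb$-side of $L_\A$ at level $\beta<3+\alpha$, I would first close up the block of $d$ (finitely many points, whose cost is absorbed into the finite offset), read off the refined tree tuple, obtain a response on the $f(\ba)$-side from the tree strategy at the appropriate smaller level $1+\gamma$, and pull it back to a block-complete order response $\ba\bc$ using density; the induction hypothesis, applied to each gap (itself a smaller shuffle-sum), then certifies the required $\leq_{3+\gamma}$ relation between the refined pieces. The forward direction is symmetric, translating an adversary tree node into a position in the corresponding gap of $L_\A$. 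The base cases $\alpha=0$, namely $\leq_1,\leq_2,\leq_3$ on $L_\A$ against $\leq_1$ on $\T_\A$, I would verify by direct finite analysis: $\leq_1$ on the tree is the atomic type (labels plus comparabilities), which on the order side corresponds to matching block sizes and the left/right density pattern of the gaps.

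The hard part will be the self-similarity claim, namely pinning down exactly which sub-forest $T'$ governs each open gap and checking that the lexicographic order genuinely realizes $L(T')$ there, including the interaction between nodes somewhat comparable and entirely incomparable to the chosen tuple (in the sense defined before \cref{lem:bftos_noparameters}). A secondary source of friction is the bookkeeping that closing up blocks and refining gaps only ever costs a bounded finite amount, so that the $3+\alpha$ versus $1+\alpha$ offset is preserved exactly and never drifts; I would control this by always stating the induction with the explicit $+2$ gap and verifying that the finitely many extra block elements drop the ordinal only by an amount the slack absorbs.
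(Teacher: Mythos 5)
Your plan founders on precisely the two claims you flag as the crux, and both are false as stated. First, the self-similarity claim: an open gap between consecutive blocks of a block-complete tuple is \emph{not} of the form $L(T')$ for a sub-forest $T'$ of $\T_\A$ determined by $f(x_j)$, $f(x_{j+1})$ and their comparabilities. Unwinding \cref{def:shufflesum}, the interval between positions $x<_{\mathrm{lex}}y$ in $\eta^{<\omega}$ is a \emph{sum} of densely shuffled cones, with one layer for each node along the paths from $f(x)$ and $f(y)$ down to the node coded by the longest common prefix $\rho$ of the two positions. So it is a sum of shuffles rather than a single shuffle, and, worse, it depends on $f(\rho)$: since each node of $\T_\A$ is coded by densely many positions, $f(\rho)$ can be \emph{any} common ancestor of $f(x)$ and $f(y)$ (not just the greatest one), depending on where the two chosen positions happen to branch in $\eta^{<\omega}$. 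Different branchings give different layer sequences, so the gap is not determined by the tree tuple at all. Consequently the gaps are not instances of the statement you are inducting on, your interval-based recursion via \cref{lem:bfandpartitions} does not close up, and patching it would require proving nontrivial absorption equivalences between the various sum-of-shuffle decompositions --- exactly the content you deferred. Second, your base case is wrong: $\leq_1$ on $\T_\A$ is \emph{not} the atomic type (labels plus comparabilities). It also requires transferring existential facts such as ``some node above $\sigma_i$ carries label $n$,'' and two nodes with identical labels can head non-isomorphic cones, so these facts do not follow from the atomic type. Correspondingly, $\ba\leq_3\bb$ on the order side is not ``matching block sizes plus the density pattern of gaps''; it must transport exactly these cone facts, which your finite analysis cannot see.

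The paper avoids both problems by never treating gaps as standalone structures. Its induction is tuple-based: the induction hypothesis concerns longer block-complete tuples in the same $L_\A$ against longer tuples in the same $\T_\A$, so no identification of intervals with smaller shuffle sums is ever needed, and only the base case requires real work. There, the forward direction is a definability argument --- the label and tree-predecessor relations are both $\Sinf{3}$- and $\Pinf{3}$-definable in $L_\A$, so $\Sinf{1}$ tree facts push forward to $\Sinf{3}$ order facts, in the spirit of \cref{lemma:translation} --- and the backward direction is an explicit three-round game whose only structural input is the coarse fact that between any two blocks of $L_\A$ there occur blocks of arbitrarily large size. If you want to salvage your outline, replace the interval recursion by this tuple recursion and replace your base-case characterizations by the definability argument; as written, the proposal has genuine gaps.
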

\begin{proof}
   The proof is by transfinite induction with the only interesting case being
   the base case. For this note that the label and predecessor
   relation of $\T_\A$ in $L_{\A}$ are both $\Sinf{3}$ and $\Pinf{3}$
   definable in $L_{\T_\A}$. Let $\phi$ be a $\Sinf{1}$ formula true of
   $\ba$, then this formula can be translated into a $\Sinf{3}$ formula true
   of $f(\ba)$ and thus $\ba\leq_3 \bb$ implies $f(\ba)\leq_1 f(\bb)$. 

   On the other hand, assume that $f(\ba)\leq_1 f(\bb)$ and without loss of
   generality that $\ba$ and $\bb$ contain elements from exactly two
   different blocks. Let us consider the back-and-forth game where the
   $\forall$-player plays elements $\bb^1$ on their first turn. For every
   $b^1_i$ the $\exists$-player looks for elements $a^1_i$ such that
   $l(f(a^1_i))=l(f(b^1_i))$, $a^1_i$ is in the same position in its block as
   $b^1_i$ and $\ba\ba^1$ and $\bb\bb^1$ are
   isomorphic in the tree-ordering. Note that they will find a suitable $\ba^1$
   because $f(\ba)\leq_1 f(\bb)$. Now on their last turn the $\forall$-player
   plays a tuple $\ba^2$ and the $\exists$-player has to respond playing
   $\bb^2$ such that $\ba\ba^1\ba^2\leq_1 \bb\bb^1\bb^2$. All they need to do
   is ensure that the intervals in $\ba\ba^1\ba^2$ are at least as large as the
   intervals in $\bb\bb^1\bb^2$. Between any two elements in different blocks in
   $L_\A$, there are blocks coding paths in $\T_\A$. Thus, between any two
   elements in different blocks there are blocks of arbitrary large size and
   thus the $\exists$-player can find elements satisfying this requirement.
\end{proof}
\begin{lemma}
   For any structure $\A$, $2+\SR(\T_\A)=\SR(L_\A)$. In
   particular, $\T_\A$ has a $\Pinf{1+\alpha}$ Scott sentence if and only if
   $L_\A$ has a $\Pinf{3+\alpha}$ Scott sentence.
\end{lemma}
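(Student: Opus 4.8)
The plan is to reduce everything to the internal back-and-forth relations via \cref{lem:bf_treetolo} and the standard characterization of Scott rank: $\SR(\B)\le\beta$ exactly when $\equiv_\beta$ coincides with the automorphism-orbit relation on tuples of $\B$. \cref{lem:bf_treetolo} already matches $\leq_{3+\alpha}$ on $L_\A$ with $\leq_{1+\alpha}$ on $\T_\A$, but only for \emph{block-complete} tuples, i.e. tuples that contain an entire labeled block whenever they meet it. So the first step is a reduction: every tuple of $L_\A$ extends to a block-complete one by adjoining the finitely many missing points of the blocks it meets, and this completion is definable over the tuple at a fixed finite level (a block is a maximal finite convex piece of the prescribed shape). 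Hence the orbit of a tuple and of its completion are interdefinable at bounded cost, and $\SR(L_\A)$ is computed, up to the finite shift already absorbed into the constant $2$, by block-complete tuples alone. The second ingredient is an orbit correspondence: two block-complete tuples $\ba,\bb$ are automorphic in $L_\A$ if and only if $f(\ba),f(\bb)$ are automorphic in $\T_\A$. I would prove this using the density of the shuffle together with the infinite replication of branches in $\T_\A$: an automorphism (or partial isomorphism) of $\T_\A$ carrying $f(\ba)$ to $f(\bb)$ lifts to an automorphism of $L_\A$ carrying $\ba$ to $\bb$ because each fibre $f^{-1}(\sigma)$ is a dense set of blocks of the fixed label $l(\sigma)$, and conversely any $L_\A$-automorphism respects the $\Sinf{3}/\Pinf{3}$-definable block and predecessor relations and hence descends to $\T_\A$.

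Granting these two facts, the rank equality is immediate. Writing $\rho=\SR(\T_\A)$, \cref{lem:bf_treetolo} gives, for block-complete tuples, $\ba\equiv_{2+\rho}\bb$ if and only if $f(\ba)\equiv_{\rho}f(\bb)$ (taking $\alpha$ with $1+\alpha=\rho$, so $3+\alpha=2+\rho$). For the upper bound $\SR(L_\A)\le 2+\rho$: if $\ba\equiv_{2+\rho}\bb$ then $f(\ba)\equiv_\rho f(\bb)$, hence $f(\ba),f(\bb)$ are automorphic in $\T_\A$ since $\SR(\T_\A)=\rho$, hence $\ba,\bb$ are automorphic in $L_\A$ by the orbit correspondence. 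For the lower bound $\SR(L_\A)\ge 2+\rho$: for each $\beta<\rho$, minimality of $\rho$ yields tuples in $\T_\A$ that are $\equiv_\beta$-equivalent but not automorphic; since $f$ is onto on block-complete tuples, pulling these back through $f$ and applying \cref{lem:bf_treetolo} produces $L_\A$-tuples that are $\equiv_{2+\beta}$-equivalent but (again by the orbit correspondence) not automorphic, so $\SR(L_\A)>2+\beta$; taking the supremum over $\beta<\rho$ gives $\SR(L_\A)\ge 2+\rho$. Hence $\SR(L_\A)=2+\SR(\T_\A)$.

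For the ``in particular'' statement I would deduce it from the rank equality together with the fact that both structures have a $\Pinf{}$ Scott sentence complexity, so that their complexity is pinned down by their Scott rank as $\Pinf{\SR+1}$ when $\SR$ is a successor and $\Pinf{\SR}$ when $\SR$ is a limit. For $\T_\A$ this is the content of the preceding lemma $\SR(\T_\A)=\SR_p(\T_\A)$ together with \cref{table:invariants}. For $L_\A$, the limit case is automatic: inspecting \cref{table:invariants}, a limit Scott rank forces the complexity $\Pinf{\lambda}$. In the successor case I would run the orbit correspondence and \cref{lem:bf_treetolo} with parameters, noting that a parameter $\bp$ of $L_\A$ block-completes to $\bp^+$ with $f(\bp^+)$ a parameter of $\T_\A$ and $\SR((L_\A,\bp))=2+\SR((\T_\A,f(\bp^+)))$; since $\SR(\T_\A)=\SR_p(\T_\A)$ means no parameter lowers the rank of $\T_\A$, the same holds for $L_\A$, giving $\SR(L_\A)=\SR_p(L_\A)$ and hence the $\Pinf{}$ complexity. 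Finally, since $2+x$ is strictly monotone and continuous in $x$ and $3+\alpha=2+(1+\alpha)$, the $\Pi$-threshold of $L_\A$ is exactly $2$ plus that of $\T_\A$; cancelling the $2$ shows $\T_\A$ has a $\Pinf{1+\alpha}$ Scott sentence if and only if $L_\A$ has a $\Pinf{3+\alpha}$ Scott sentence. The main obstacle is the orbit correspondence for block-complete tuples: it is the only place the shuffle-sum combinatorics and infinite replication are essential, and one must check carefully that the finitely many points completing a partial block never shift the rank beyond the constant already accounted for.
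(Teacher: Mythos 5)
Your route to the rank equality is genuinely different from the paper's: you reduce to block-complete tuples and then prove an orbit correspondence (automorphic in $L_\A$ iff the $f$-images are automorphic in $\T_\A$), whereas the paper never mentions automorphisms at all --- it observes that $\alpha$-freeness is defined purely in terms of the asymmetric back-and-forth relations, so \cref{lem:bf_treetolo} transfers ``$\ba$ is $(3+\alpha)$-free in $L_\A$'' to ``$f(\ba)$ is $(1+\alpha)$-free in $\T_\A$'' directly, with no lifting of automorphisms needed. Your version also needs one repair: the characterization you invoke, ``$\SR(\B)\le\beta$ exactly when $\equiv_\beta$ coincides with automorphic equivalence,'' is not correct for the Scott rank used in this paper (least $\alpha$ such that all orbits are $\Sinf{\alpha}$-definable). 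For example, in $\omega$ the relation $\equiv_1$ already coincides with automorphic equivalence, yet $\SR(\omega)=2$ (its Scott sentence complexity is $\Pinf{3}$). The correct statement is asymmetric: the orbit of $\ba$ is $\Sinf{\beta}$-definable iff every tuple related to $\ba$ by $\leq_\beta$ in the appropriate direction is automorphic to it. Since \cref{lem:bf_treetolo} is stated for the asymmetric relations, your upper- and lower-bound arguments survive this substitution, so the rank equality itself can be salvaged along your lines.

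The genuine gap is in your deduction of the ``in particular'' clause at limit levels. You assert that ``a limit Scott rank forces the complexity $\Pinf{\lambda}$'' by inspecting \cref{table:invariants}; this is false. The row $\Pinf{\alpha+1}$ of the table applies at \emph{every} ordinal, so a structure with $\SR=\SR_p=\lambda$ can have Scott sentence complexity either $\Pinf{\lambda}$ or $\Pinf{\lambda+1}$: the rank invariants do not decide between them. Deciding between them is exactly what the paper's unstable $\lambda$-sequence machinery was built for (the lemma stating that, when $\SR(\A)=\lambda$, one has $SSC(\A)=\Pinf{\lambda}$ iff $\A$ has no unstable $\lambda$-sequence), and the limit case of the paper's proof consists precisely of the step your argument is missing: using \cref{lem:bf_treetolo} to show that $L_\A$ has an unstable $\lambda$-sequence iff $\T_\A$ does. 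Without this (or an equivalent argument), your proof establishes the equivalence ``$\T_\A$ has a $\Pinf{1+\alpha}$ Scott sentence iff $L_\A$ has a $\Pinf{3+\alpha}$ Scott sentence'' only for successor $1+\alpha$, where having a $\Pinf{\gamma+1}$ Scott sentence is equivalent to $\SR\le\gamma$ and left-cancellation of the $2$ finishes the job; at limit $1+\alpha=\lambda$ the statement simply does not follow from the rank equality.
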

\begin{proof}
   For successor ordinals it is sufficient to note that \cref{lem:bf_treetolo} implies that a tuple $\ba$ in $L_\A$ is $(3+\alpha)$-free if and only if $f(\ba)$ is
   $(1+\alpha)$-free in $\T_\A$. Likewise, a tuple $\ba$ is
   $(1+\alpha)$-free in $\T_\A$ if and only if $f^{-1}(\ba)$ is
   $(3+\alpha)$-free in $L_\A$. 

   For $\alpha$ a limit, note that by \cref{lem:bf_treetolo} an
   $\alpha$-sequence for $L_\A$ is unstable if and only if the pull-back along
   $f$ is unstable in $\T_\A$.
\end{proof}
Combining everything we have developed in this section we obtain.
\begin{theorem}\label{thm:fsandscottrank}
   For $\alpha>\omega$, the Friedman-Stanley embedding preserves parameterized
   Scott rank, but it does not preserve parameterless Scott rank. In particular,
   for any given structure $\A$ and any ordinal $\alpha$:
   \begin{enumerate}
      \tightlist
      \item If $SSC(\A)=\Pinf{1+\alpha}$ then $SSC(L_\A)=\Pinf{3+\alpha}$
      \item If $SSC(\A)=\Sinf{1+\alpha}$ or $SSC(\A)=\dSinf{1+\alpha}$ then 
      $SSC(L_\A)=\Pinf{3+\alpha+1}$
   \end{enumerate}
\end{theorem}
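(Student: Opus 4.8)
The plan is to obtain the theorem by composing the effects of the two steps of the embedding $\A\mapsto\T_\A\mapsto L_\A$ that we have already isolated, and then to translate the resulting Scott sentence complexities into statements about the two ranks using \cref{table:invariants}. The first step is governed by the corollary computing $SSC(\T_\A)$: it fixes $\Pinf{\beta}$ and sends both $\Sinf{\beta}$ and $\dSinf{\beta}$ to $\Pinf{\beta+1}$, so that in every case $SSC(\T_\A)=\Pinf{\gamma}$ for an appropriate $\gamma$. The second step is governed by the preceding lemma, which shows that $\T_\A$ has a $\Pinf{1+\delta}$ Scott sentence exactly when $L_\A$ has a $\Pinf{3+\delta}$ one; writing $\gamma=1+\delta$, this yields a $\Pinf{2+\gamma}$ Scott sentence for $L_\A$.

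The point that needs care, and which I expect to be the main obstacle, is that the second-step lemma is phrased in terms of the existence of $\Pinf{}$ Scott sentences, whereas the conclusion concerns the least complexity, i.e.\ the $SSC$. The $\Pinf{2+\gamma}$ Scott sentence only gives the upper bound, so I must still exclude a cheaper $\Sinf{}$ or $\dSinf{}$ Scott sentence for $L_\A$. Rather than reprove anything, I would invoke the back-and-forth content of \cref{lem:bf_treetolo} that already underlies the second-step lemma. In the successor case the freeness correspondence, that $\ba$ is $(3+\delta)$-free in $L_\A$ if and only if $f(\ba)$ is $(1+\delta)$-free in $\T_\A$, transfers the equality $\SR(\T_\A)=\SR_p(\T_\A)$ to $L_\A$; hence $L_\A$ needs no parameters and its complexity is forced to the $\Pinf{}$-form dictated by its Scott rank. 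In the limit case I would instead use that, by \cref{lem:bf_treetolo}, an $\alpha$-sequence in $L_\A$ is unstable precisely when its pull-back along $f$ is unstable in $\T_\A$: since $SSC(\T_\A)=\Pinf{\lambda}$ means $\T_\A$ carries no unstable $\lambda$-sequence, neither does $L_\A$, and the unstable-sequence characterization of $\Pinf{\lambda}$ complexity then pins $SSC(L_\A)=\Pinf{\lambda}$. In either case $SSC(L_\A)=\Pinf{2+\gamma}$ exactly.

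The two clauses are then pure ordinal arithmetic. If $SSC(\A)=\Pinf{1+\alpha}$, then $SSC(\T_\A)=\Pinf{1+\alpha}$ and so $SSC(L_\A)=\Pinf{2+(1+\alpha)}=\Pinf{3+\alpha}$, which is clause~(1). If $SSC(\A)=\Sinf{1+\alpha}$ or $\dSinf{1+\alpha}$, then $SSC(\T_\A)=\Pinf{(1+\alpha)+1}$ and so $SSC(L_\A)=\Pinf{2+((1+\alpha)+1)}=\Pinf{3+\alpha+1}$, which is clause~(2).

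Finally, the two rank statements of the first sentence are read off by feeding $SSC(\A)$ and $SSC(L_\A)$ through \cref{table:invariants}. The hypothesis $\alpha>\omega$ is exactly what keeps this clean: the absorbing identities $2+\beta=\beta$ and $3+\beta=\beta$ for $\beta\geq\omega$ collapse the constant shifts introduced by the two steps, so that the ranks extracted from the table for $\A$ and for $L_\A$ relate as claimed in the first sentence. I would verify this by tabulating the finitely many shapes of $SSC(\A)$—according to whether $\alpha$ is a limit or a successor and which of $\Pinf{}$, $\Sinf{}$, $\dSinf{}$ occurs—and checking each line against the table.
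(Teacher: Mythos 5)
Your proposal follows the paper's own route: the paper's proof of \cref{thm:fsandscottrank} is literally the one-line ``combining everything we have developed in this section,'' i.e.\ composing the (unlabeled) corollary that computes $SSC(\T_\A)$ from $SSC(\A)$ with the (unlabeled) lemma following \cref{lem:bf_treetolo} that relates $\SR(\T_\A)$ and $\Pinf{}$ Scott sentences of $\T_\A$ to those of $L_\A$. Your treatment of the exactness issue---ruling out cheaper $\Sinf{}$ or $\dSinf{}$ sentences for $L_\A$ via the freeness correspondence in the successor case and the unstable-sequence transfer in the limit case---is precisely the content of that lemma's proof, and your ordinal arithmetic for clauses (1) and (2) is correct.

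The one step that would fail is the final paragraph. If you actually carry out the tabulation you defer, you will find that the clauses force the \emph{opposite} pairing from the theorem's first sentence: in every case with rank above $\omega$ the parameterless rank is preserved, since $\SR(L_\A)=2+\SR(\T_\A)=2+\SR(\A)=\SR(\A)$, while the parameterized rank is not. For instance, $SSC(\A)=\Sinf{\beta+2}$ with $\beta\geq\omega$ gives $\pSR(\A)=\beta$ by \cref{table:invariants}, whereas clause (2) gives $SSC(L_\A)=\Pinf{\beta+3}$ and hence $\pSR(L_\A)=\beta+2$; similarly in the $\dSinf{}$ and $\Sinf{\lambda+1}$ cases the parameterized rank increases, and it is preserved only when $SSC(\A)$ is already of $\Pinf{}$ form. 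So feeding the clauses through the table establishes that the embedding preserves \emph{parameterless} Scott rank and fails to preserve \emph{parameterized} Scott rank---which is also what the paper's own discussion preceding \cref{lem:bftos_noparameters} asserts (``parameterized Scott rank will not be preserved''). The theorem's prose sentence evidently has the two notions swapped. Your proof of the displayed clauses is fine, but as written your proposal asserts a verification that cannot succeed for the first sentence as literally stated; you should flag this discrepancy rather than claim the tabulation confirms it.
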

Another useful characterization of structures with Scott sentence complexity
$\Pinf{\lambda}$ is that they are precisely those structures whose
$\Pinf{\lambda}$ infinitary theory is $\aleph_0$-categorical. Our results thus
show the following.
\begin{corollary}
   For any limit ordinal $\lambda$ and any structure $\A$, the $\Pinf{\lambda}$
   infinitary theory of $\A$ is $\aleph_0$-categorical if and only if the
   $\Pinf{\lambda}$ theory of $L_\A$ is $\aleph_0$-categorical.
\end{corollary}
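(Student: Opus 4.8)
The plan is to combine two facts. First, for any structure $\mathcal{M}$ and any limit ordinal $\lambda$, the $\Pinf{\lambda}$ theory of $\mathcal{M}$ is $\aleph_0$-categorical if and only if $\mathcal{M}$ has a $\Pinf{\lambda}$ Scott sentence. Second, $\A$ has a $\Pinf{\lambda}$ Scott sentence if and only if $L_\A$ does. Granting these, the corollary is immediate: apply the first fact once to $\mathcal{M}=\A$ and once to $\mathcal{M}=L_\A$, and bridge the two applications with the second fact.

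For the first fact, the only nontrivial direction is that $\aleph_0$-categoricity yields an actual $\Pinf{\lambda}$ Scott sentence. I would invoke the canonical $\Pinf{\lambda}$ sentence $\pi$ of $\mathcal{M}$, that is, the $\Pinf{\lambda}$ sentence axiomatizing the class $\{\mathcal{N}:\mathcal{M}\leq_\lambda\mathcal{N}\}$, so that for every countable $\mathcal{N}$ one has $\mathcal{N}\models\pi$ exactly when every $\Pinf{\lambda}$ sentence true in $\mathcal{M}$ is true in $\mathcal{N}$; such a sentence exists by the standard construction of back-and-forth formulas (see \cite{montalban2021}). The countable models of $\pi$ are then precisely the countable models of the $\Pinf{\lambda}$ theory of $\mathcal{M}$, so if that theory is $\aleph_0$-categorical then every countable model of $\pi$ is isomorphic to $\mathcal{M}$, making $\pi$ a $\Pinf{\lambda}$ Scott sentence. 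The converse is routine, since a $\Pinf{\lambda}$ Scott sentence lies in the $\Pinf{\lambda}$ theory and already pins down the isomorphism type.

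For the second fact I would read off \cref{thm:fsandscottrank}, after noting that $\mathcal{M}$ has a $\Pinf{\lambda}$ Scott sentence exactly when $SSC(\mathcal{M})$ is contained in $\Pinf{\lambda}$, i.e.\ when $SSC(\mathcal{M})=\Pinf{\gamma}$ with $\gamma\leq\lambda$, or $SSC(\mathcal{M})\in\{\Sinf{\gamma},\dSinf{\gamma}\}$ with $\gamma<\lambda$. Now split on the form of $SSC(\A)$. If $SSC(\A)=\Pinf{1+\beta}$ then $SSC(L_\A)=\Pinf{3+\beta}$, and since $\lambda$ is a limit one checks $3+\beta\leq\lambda\iff 1+\beta\leq\lambda$ (both hold vacuously when $\beta$ is finite, while $3+\beta=1+\beta=\beta$ when $\beta\geq\omega$); hence $L_\A$ has a $\Pinf{\lambda}$ Scott sentence iff $\A$ does. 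If instead $SSC(\A)\in\{\Sinf{1+\beta},\dSinf{1+\beta}\}$ then $SSC(L_\A)=\Pinf{3+\beta+1}$, and the same arithmetic gives $3+\beta+1\leq\lambda\iff 3+\beta<\lambda\iff 1+\beta<\lambda$, again matching the condition for $\A$. Since every structure's Scott sentence complexity takes one of these forms and $SSC(L_\A)$ is always a $\Pinf{}$ complexity, these cases are exhaustive.

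The main obstacle is the first fact, specifically the passage from $\aleph_0$-categoricity to a genuine $\Pinf{\lambda}$ Scott sentence: the full $\Pinf{\lambda}$ theory need not be a countable set of sentences, so one cannot simply conjoin it, and the argument really needs the single canonical $\Pinf{\lambda}$ sentence whose countable models coincide with the countable models of the theory. The remaining work, the ordinal bookkeeping in the second fact, is routine once one is careful that $1+\beta$, $3+\beta$, and $3+\beta+1$ all compare in the same way to a limit ordinal $\lambda$.
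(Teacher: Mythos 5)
Your proposal is correct and takes essentially the same route as the paper, which states this corollary without a written proof as an immediate consequence of the equivalence between $\aleph_0$-categoricity of the $\Pinf{\lambda}$ theory and the existence of a $\Pinf{\lambda}$ Scott sentence, combined with \cref{thm:fsandscottrank}. If anything, your write-up is more careful than the paper's lead-in, which phrases the equivalence as characterizing structures with Scott sentence complexity exactly $\Pinf{\lambda}$; the correct formulation (and the one your case analysis actually uses) is in terms of having a $\Pinf{\lambda}$ Scott sentence, i.e.\ $SSC(\A)\leq\Pinf{\lambda}$, which also covers structures of strictly smaller complexity.
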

The above corollary is related to a result by Calvert, Goncharov, and Knight
showed that there is a linear ordering of Scott rank $\ock$ by proving a
``rank-preservation property'' for computable embeddings~\cite{calvert2007}. We
will not define this property. What their arguments show is that the
Friedman-Stanley embedding preserves $\aleph_0$-categricity of the computable
infinitary theory of structures. As a structures computable infinitary theory is
$\aleph_0$-categorical if and only if it has Scott rank $\ock$ this shows that
the Friedman-Stanley embedding preserves Scott rank at that level
(see~\cite{alvir2021}).
\subsection{Remarks on effectivity}
Some Borel reductions maintain Scott rank externally in the sense that they take sufficiently complex Borel isomorphism classes and push them forward to classes of equal complexity in the image space.
In this case, Scott sentence complexity is preserved for sufficiently high
complexities.

\begin{proposition}\label{prop:push-forwardssc}
   If $\Phi:Mod(\sigma)\to Mod(\tau)$ is a Borel reduction such that
   $[\Phi(X)]_{\cong}$ is Borel and that eventually satisfies push-forward, i.e.,
   there is $\delta$ such that for $\gamma\geq\delta$ and
   $\psi\in\Pinf{\gamma}$ there is $\psi^*\in\Pinf{\gamma}$ such that
   $$\A\models \psi \iff \Phi(A)\models\psi^*$$
   then there is an ordinal $\alpha$ such that $\text{SR}(\A)\geq\alpha$ implies
   that $SSC(\A)=SSC(\Phi(\A))$.
\end{proposition}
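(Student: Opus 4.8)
The plan is to prove the two inequalities $SSC(\Phi(\A))\le SSC(\A)$ and $SSC(\A)\le SSC(\Phi(\A))$ separately, each as a complexity\nobreakdash-class\nobreakdash-preserving bound that is tight only up to a fixed additive ordinal, and then to absorb those additive constants by taking $\SR(\A)$ large. Two objects control these constants. First, since $[\Phi(X)]_\cong$ is Borel and isomorphism invariant, the Lopez--Escobar theorem yields an $L_{\omega_1\omega}$ sentence $\rho$ of some fixed complexity, with index $\beta_0$ say, whose models are exactly the isomorphic copies of elements of the range of $\Phi$. Second, since $\Phi$ is Borel it is $\mathbf{\Sigma}^0_\xi$-measurable for some fixed countable $\xi$, so preimages under $\Phi$ of $\Pinf{\gamma}$- (resp.\ $\Sinf{\gamma}$-, $\dSinf{\gamma}$-) definable invariant classes are $\Pinf{\xi+\gamma}$- (resp.\ $\Sinf{\xi+\gamma}$-, $\dSinf{\xi+\gamma}$-) definable. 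I will set the threshold to be $\alpha=\max(\delta,\beta_0,\omega^{\xi+1})$ and use throughout that the index of $SSC(M)$ is always at least $\SR(M)$, so that $\SR(\A)\ge\alpha$ forces the index of $SSC(\A)$ above all three of $\delta$, $\beta_0$, and $\omega^{\xi+1}$.

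For the first inequality I note that push-forward, stated for $\Pinf{\gamma}$, extends to $\Sinf{\gamma}$ and to $\dSinf{\gamma}$ by negating and conjoining, so for $\gamma\ge\delta$ every $\Gamma_\gamma$ sentence $\psi$ over $\sigma$ has a $\Gamma_\gamma$ push-forward $\psi^*$ over $\tau$ with $X\models\psi\iff\Phi(X)\models\psi^*$ for all $X$. Taking $\psi$ to be a Scott sentence of $\A$ of minimal complexity $\Gamma_\gamma=SSC(\A)$, I claim $\psi^*\wedge\rho$ is a Scott sentence for $\Phi(\A)$: any model of it lies in the range of $\Phi$ by $\rho$, hence is some $\Phi(\B)$ with $\B\models\psi$, hence $\B\cong\A$ and the model is isomorphic to $\Phi(\A)$; conversely $\Phi(\A)$ satisfies both conjuncts, and both are isomorphism invariant. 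Since $\gamma$ exceeds $\beta_0$, conjoining $\rho$ is absorbed and leaves the complexity class $\Gamma_\gamma$ unchanged, so $SSC(\Phi(\A))\le SSC(\A)$ with the same complexity type.

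For the second inequality I use that $\Phi$ is a reduction, so $[\A]_\cong=\Phi^{-1}\bigl([\Phi(\A)]_\cong\bigr)$. A Scott sentence of $\Phi(\A)$ of minimal complexity $\Delta_{\gamma'}=SSC(\Phi(\A))$ defines $[\Phi(\A)]_\cong$; pulling it back along $\Phi$ and invoking $\mathbf{\Sigma}^0_\xi$-measurability shows $[\A]_\cong$ is $\Delta_{\xi+\gamma'}$-definable, so $\A$ has a $\Delta_{\xi+\gamma'}$ Scott sentence and $SSC(\A)\le\Delta_{\xi+\gamma'}$. Combining with the first inequality, the index $\gamma$ of $SSC(\A)$ satisfies $\gamma\le\xi+\gamma'$; since $\gamma\ge\omega^{\xi+1}$, were $\gamma'<\omega^{\xi+1}$ we would get $\xi+\gamma'<\xi+\omega^{\xi+1}=\omega^{\xi+1}\le\gamma$, a contradiction, so $\gamma'\ge\omega^{\xi+1}$ as well. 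Hence left-addition of $\xi$ is absorbed, $\xi+\gamma'=\gamma'$, and $SSC(\A)\le\Delta_{\gamma'}=SSC(\Phi(\A))$ with the same complexity type. The two inequalities together force equal index and equal type, i.e.\ $SSC(\A)=SSC(\Phi(\A))$.

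The main obstacle is the second inequality, where only push-forward is assumed and there is no hypothesis giving a complexity-controlled pull-back. The device that saves it is that a Borel reduction is automatically $\mathbf{\Sigma}^0_\xi$-measurable for some fixed $\xi$, so pulling back raises the complexity index only by left-addition of the fixed ordinal $\xi$; the role of ``sufficiently high Scott rank'' is precisely to push both indices past $\omega^{\xi+1}$, the point beyond which adding $\xi$ (and $\beta_0$) on the left changes nothing. The bookkeeping to watch is that preimage commutes with complementation and with the difference operation, so the pull-back preserves the $\Pi$/$\Sigma$/$d\Sigma$ type and not merely the index; verifying that both index \emph{and} type match exactly, using that $SSC$ is a genuine least complexity, is what finally yields equality rather than mere agreement up to a constant.
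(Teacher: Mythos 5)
Your proposal is correct and follows essentially the same route as the paper's proof: Lopez--Escobar applied to the Borel isomorphism-closed image to get an invariant sentence $\rho$ (the paper's $\chi$), push-forward conjoined with that sentence for one inequality, the pullback theorem via $\bSigma^0_\xi$-measurability for the other, and absorption of the additive constants once the Scott rank is large. Your explicit threshold $\omega^{\xi+1}$ and the left-addition absorption argument are in fact a cleaner bookkeeping of the step the paper summarizes as ``$\SR(\A)>\max(\lambda\cdot\epsilon,\delta,\beta)$''.
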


\begin{proof}
Say $\Phi$ is $\bSigma_\lambda^0$-measurable and the image of $\Phi$ is
$\bPi_\beta^0$. Let $\chi\in\Pinf{\beta}$ describe the image of $\Phi$ in
$Mod(\tau)$. 
Let $\A$ have a $\Gamma^{\mathrm{in}}_\epsilon$ Scott sentence for
$\epsilon>\delta$, say $\phi$. Then $\phi^*\land \chi$ is a
$\Gamma^\mathrm{in}_\epsilon$ Scott sentence for $\Phi(\A)$. On the other hand
say $\Phi(\A)$ has a $\Gamma^{\mathrm{in}}_\epsilon$ Scott sentence for any
$\epsilon$, then by the pullback theorem $\A$ has a
$\Gamma^{\mathrm{in}}_{\lambda+\epsilon}$ Scott sentence. So for any $\A$ with
$\SR(\A)>\max(\lambda\cdot \epsilon, \delta,\beta)$, Scott sentence complexity
is preserved.
\end{proof}

Note that this proof has a straightforward effectivization. In particular, if
$\Phi$ is hyperarithmetic and $\psi$ computable implies that $\psi^*$ is
computable, then the complexity of the simplest computable formula that
describes the isomorphism type of $\A$ is also preserved.

However, as we have seen in \cref{thm:fsandscottrank}, the Friedman-Stanley
embedding does not have the property that Scott sentence complexity is eventually
preserved. We thus get the following corollary.

\begin{corollary}
   The image of the Friedman-Stanley embedding is not a Borel subset of the
   class of linear orderings.
\end{corollary}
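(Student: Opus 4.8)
The plan is to argue by contradiction, playing \cref{prop:push-forwardssc} against \cref{thm:fsandscottrank}. \cref{prop:push-forwardssc} says that a Borel reduction whose image is Borel and which eventually satisfies push-forward must eventually preserve Scott sentence complexity. But \cref{thm:fsandscottrank} exhibits structures witnessing that the Friedman-Stanley embedding fails to preserve Scott sentence complexity at arbitrarily high levels: for every $\alpha>\omega$ there is $\A$ with $SSC(\A)=\Sinf{1+\alpha}$ while $SSC(L_\A)=\Pinf{3+\alpha+1}$, and since $1+\alpha=\alpha$ and $3+\alpha+1=\alpha+1$ these are genuinely distinct complexities. Hence not both hypotheses of \cref{prop:push-forwardssc} can hold, so it suffices to check that the embedding does eventually satisfy push-forward; the failure of the Borel-image hypothesis then follows.

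To establish eventual push-forward I would handle the two steps of the embedding separately. The tree-of-tuples step already satisfies push-forward exactly and at every level: \cref{eq:pushforward} of \cref{lemma:translation} sends each $\Pinf{\gamma}$ sentence $\phi$ over $\tau$ to a $\Pinf{\gamma}$ tree sentence $T_\phi$ with $\A\models\phi\iff\T_\A\models T_\phi$. For the passage from $\T_\A$ to $L_\A$ I would use the observation from the proof of \cref{lem:bf_treetolo} that the labeling and predecessor relations of $\T_\A$ are $\Sinf{3}$- and $\Pinf{3}$-definable inside $L_\A$. This interpretation lets me relativize any $\Pinf{\gamma}$ tree sentence to the copy of $\T_\A$ living inside $L_\A$, producing a $\Pinf{3+\gamma}$ sentence $(T_\phi)^*$ over the language of linear orderings with $\T_\A\models T_\phi\iff L_\A\models(T_\phi)^*$. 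The decisive point is the ordinal collapse $3+\gamma=\gamma$ for $\gamma\geq\omega$, which renders this translation exact at infinite levels. Composing, $\phi\mapsto(T_\phi)^*$ is a genuine push-forward for the full embedding, valid from $\delta=\omega$ onward.

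With eventual push-forward in hand the contradiction is immediate: if the image of the Friedman-Stanley embedding were Borel, \cref{prop:push-forwardssc} would force $SSC(\A)=SSC(L_\A)$ for every $\A$ of sufficiently large Scott rank, contradicting the witnesses supplied by \cref{thm:fsandscottrank}. I expect the main obstacle to lie in the middle paragraph, namely in making the relativization for the linear-ordering step precise. \cref{lem:bf_treetolo} is stated only for tuples that contain each labeled block wholly, and it records a back-and-forth correspondence with a constant shift rather than an explicit translation of sentences; the real work is to verify that the $\Sinf{3}/\Pinf{3}$ interpretation of $\T_\A$ in $L_\A$ transports arbitrary $\Pinf{\gamma}$ sentences (not merely Scott sentences) with only a bounded increase in complexity, so that the collapse $3+\gamma=\gamma$ can be invoked to obtain push-forward exactly at the infinite levels \cref{prop:push-forwardssc} requires.
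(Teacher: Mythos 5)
Your proposal is correct and follows essentially the same route as the paper: the paper derives the corollary directly from the contrapositive of \cref{prop:push-forwardssc} together with the failure of eventual preservation of Scott sentence complexity recorded in \cref{thm:fsandscottrank}. The only difference is that you spell out the eventual push-forward property (via \cref{lemma:translation} and the $\Sinf{3}/\Pinf{3}$ interpretation of $\T_\A$ inside $L_\A$, using $3+\gamma=\gamma$ for $\gamma\geq\omega$), which the paper leaves implicit.
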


It should be noted that we can observe that the class of trees of structures is
not Borel among the class of labeled trees directly. In particular, consider
$\T_\A$ and $\T_\B$ for structures with $\A\equiv_{\alpha+1} \B$ yet
$\A\not\cong \B$. We can now define $T_{\A,\B}:= \T_\A\sqcup_r\T_\B$, the tree
formed by identifying the roots of $\T_\A$ and $\T_\B$ and adding no other
relations between them. Given our established criteria for $\equiv_\beta$, it is
not difficult to show that $T_{\A,\B}\equiv_{\alpha}\T_\A\equiv_{\alpha}\T_\B$.
However, $T_{\A,\B}$ is not a tree of structures, as it lacks the property that
comeager many paths represent the same structure. By taking $\alpha$ arbitrarily
large we can observe that there can be no $L_{\omega_1\omega}$ formula defining
the class of tree of structures within the class of labeled trees.

This justifies why the proof of preservation of Scott rank proceeded as it did above.
In particular, Scott rank was treated "internally" by looking at the definitions of automorphism orbits inside of the structure.
Montalb\'an's theorem on the robustness of the Scott rank~\cite{montalban2015}
and the work of Alvir, Greenberg, Harrison-Trainor and
Turetsky~\cite{alvir2021} allowed us to take this approach, as there is an established correspondence between the internal characterization of Scott sentence complexity and the external one.
However, no such correspondence exists for computable formulas.
Therefore, the proof technique used does not make it clear if there is a preservation of computable Scott rank.

The closest result to an effective version of Montalb\'an's theorem is that of Alvir, Knight and McCoy~\cite{alvir2020a}. They prove the following.

\begin{theorem}\label{thm:alvirknightmccoy}
   Let $\A$ be a computable structure and consider the following three properties:
\begin{enumerate}
   \item\label{akm1} There is a computable function $f$ taking each tuple $\ba$ to a
     computable $\Sinf\alpha$ formula that defines its automorphism orbit.
  \item\label{akm2} $\A$ has a computable $\Pinf{\alpha+1}$ Scott sentence.
  \item\label{akm3} Every tuple in $\A$ has a computable $\Sinf\alpha$-definable automorphism orbit.
\end{enumerate}
\cref{akm1} implies \cref{akm2} implies \cref{akm3}.
\end{theorem}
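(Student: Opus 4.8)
My plan is to prove the two implications in turn, both following the non-effective template connecting Scott families, Scott sentences, and definable orbits (Montalb\'an's robustness theorem~\cite{montalban2015,montalban2021}); the content is to keep every formula computable throughout.

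\emph{\ref{akm1}$\Rightarrow$\ref{akm2}.} Write $\phi_{\ba}:=f(\ba)$ for the computable $\Sinf{\alpha}$ formula defining the orbit of $\ba$; conjoining the atomic diagram $D_{\A}(\ba)$ (harmless for $\alpha\geq1$) I may assume $\phi_{\ba}$ entails it. I would then write the canonical Scott sentence
\[
\psi:=\phi_{\langle\rangle}\wedge\bigwwedge_{\ba\in A^{<\omega}}\forall\bx\,\Big[\phi_{\ba}(\bx)\to\Big(\bigwwedge_{b\in A}\exists y\,\phi_{\ba b}(\bx,y)\wedge\forall y\bigvee_{b\in A}\phi_{\ba b}(\bx,y)\Big)\Big].
\]
A complexity count gives $\psi\in\Pinf{\alpha+1}$: each $\exists y\,\phi_{\ba b}$ is $\Sinf{\alpha}$, so both inner blocks are $\Pinf{\alpha+1}$, and the implication, the quantifier $\forall\bx$, and the outer conjunction preserve this, while $\phi_{\langle\rangle}\in\Sinf{\alpha}\subseteq\Pinf{\alpha+1}$. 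Since $f$ and $\A$ are computable, all index sets and component formulas are uniformly computable, so $\psi$ is a computable $\Pinf{\alpha+1}$ sentence. That $\psi$ is a Scott sentence is the usual back-and-forth check: $\A\models\psi$ is immediate, and for countable $\B\models\psi$ the relation $I=\{(\ba,\bb):|\ba|=|\bb|,\ \B\models\phi_{\ba}(\bb)\}$ is a back-and-forth system --- the $\forall y\bigvee_b$ conjunct supplies extension on the $\B$-side, the $\bigwwedge_b\exists y$ conjunct extension on the $\A$-side, and $\phi_{\ba}\vdash D_{\A}(\ba)$ makes each pair a partial isomorphism --- so $\A\cong\B$.

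\emph{\ref{akm2}$\Rightarrow$\ref{akm3}.} Let $\psi\in\Pinf{\alpha+1}$ be a computable Scott sentence. By Montalb\'an's robustness theorem the mere existence of a $\Pinf{\alpha+1}$ Scott sentence rules out $\alpha$-free tuples in $\A$: an $\alpha$-free tuple can be used to build a countable $\B\equiv_{\alpha+1}\A$ with $\B\not\cong\A$, and since $\psi\in\Pinf{\alpha+1}$ and $\A\leq_{\alpha+1}\B$ this forces $\B\models\psi$, contradicting that $\psi$ is a Scott sentence. Having excluded $\alpha$-free tuples I would fix, for each $\ba$ and non-uniformly, a witness consisting of some $\beta<\alpha$ and a finite tuple $\bc$ with the property that $\ba\bc\leq_{\beta}\ba'\bc'$ forces $\ba\leq_{\alpha}\ba'$, and define the orbit by $\exists\bd\,\xi^{\beta}_{\ba\bc}(\bx,\bd)$, where $\xi^{\beta}_{\ba\bc}$ is the canonical $\Pinf{\beta}$ formula with $\A\models\xi^{\beta}_{\ba\bc}(\ba',\bc')\iff\ba\bc\leq_{\beta}\ba'\bc'$; as an $\exists$ over a $\Pinf{\beta}$ formula with $\beta<\alpha$ this is $\Sinf{\alpha}$. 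Its defining property together with the absence of $\alpha$-free tuples --- which makes $\leq_{\alpha}$ collapse to the orbit relation on tuples of $\A$ --- shows it defines exactly the orbit of $\ba$. For computable $\A$ these canonical back-and-forth formulas are uniformly computable~\cite{ash2000}, so, the witness $(\beta,\bc)$ being finite data, $\exists\bd\,\xi^{\beta}_{\ba\bc}$ is a computable $\Sinf{\alpha}$ definition of the orbit.

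The main obstacle --- and the reason the statement stops at implications rather than equivalences --- is the uniformity gap between \ref{akm3} and \ref{akm1}. The non-$\alpha$-freeness argument only guarantees the \emph{existence} of witnesses $(\beta,\bc)$ for each $\ba$, with no computable way to find them uniformly; one therefore obtains a computable $\Sinf{\alpha}$ orbit definition for each individual tuple, but not a single computable \emph{function} producing them. I would thus concentrate the technical effort on making the companion constructions effective: the explicit computable Scott sentence in the first implication, and, in the second, the verification that the canonical formulas witnessing non-$\alpha$-freeness stay computable, so that each orbit is computably $\Sinf{\alpha}$-definable even though the assignment of formulas to tuples need not be.
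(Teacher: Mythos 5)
A point of order first: the paper does not prove this theorem at all --- it states it as a quoted result of Alvir, Knight, and McCoy~\cite{alvir2020a}, used as background for the discussion of effectivity --- so there is no in-paper proof to compare yours against. Judged on its own merits, your argument is correct. For \cref{akm1}$\Rightarrow$\cref{akm2} you assemble the canonical Scott sentence from the computable Scott family, and both halves of the verification are sound: the complexity count (using that computable $\Pinf{\alpha+1}$ formulas are effectively closed under finite disjunction, universal quantification, and computable conjunction) and the back-and-forth argument showing any countable model of $\psi$ is isomorphic to $\A$, with the conjoined atomic diagrams guaranteeing partial isomorphisms. For \cref{akm2}$\Rightarrow$\cref{akm3} you correctly observe that a computable $\Pinf{\alpha+1}$ Scott sentence is in particular a boldface one, so by Montalb\'an's robustness theorem no tuple of $\A$ is $\alpha$-free; then the non-uniform witness $(\beta,\bc)$ of non-$\alpha$-freeness together with the computability of the canonical back-and-forth formulas of a computable structure yields the computable $\Sinf{\alpha}$ orbit definition $\exists\bd\,\xi^{\beta}_{\ba\bc}(\bx,\bd)$, and the collapse of $\leq_\alpha$ to the automorphism-orbit relation that you invoke to see this formula picks out exactly the orbit is itself part of the same robustness theorem, so the reasoning closes. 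Your closing remark that the uniformity gap between \cref{akm3} and \cref{akm1} is what blocks reversing the implications matches the shape of the stated theorem and is the right diagnosis.
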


Effectivizing our results in the previous section, it is not difficult to see
that the Friedman-Stanley embedding preserves \cref{akm1} and \cref{akm3}. 
However, preservation of \cref{akm2} may need a different technique, or not be true at all.
Alvir, Knight and McCoy~\cite{alvir2020a} ask to give a necessary and sufficient
condition for a structure $\A$ to have a computable $\Pinf{\alpha+1}$ Scott sentence.
We offer the following variation closer connected to our studies.

\begin{question}
Does $\A$ have a computable $\Pinf{\alpha+1}$ Scott sentence if and only if
$\T_\A$ has a computable $\Pinf{\alpha+1}$ Scott sentence?
\end{question}

If this question has a positive answer, we suspect that the technique involved
in proving this could be used to answer the question of Alvir, Knight and McCoy.
 \section{$\alpha$-universality and linear orderings}\label{sec:universality}
The main purpose of this section is to show that there is no linear ordering of
Scott sentence complexity $\Sinf{3}$ and that there is no embedding from any
class of structures into linear orderings that is simpler than the
Friedman-Stanley embedding in the sense that it increases the Scott sentence
complexity less.

In order to do this we introduce and study the following notion for the class of
linear orderings.
\begin{definition}\label{def:alphauniversality}
  Let $K_0\subseteq K_1$ be isomorphism invariant classes of structures. The
  class $K_0$ is \emph{$\alpha$-universal} for $K_1$ if for every $\A\in K_1$
  there is $\B\in K_0$ such that $\A\leq_\alpha\B$. 
\end{definition}
\begin{lemma}\label{lem:universalrankinvariant}
  If $K_0$ is $\alpha$-universal for $K_1$, then $K_0$ contains all structures
  in $K_1$ with $\Pinf{\alpha}$ Scott sentences.
\end{lemma}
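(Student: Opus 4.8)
The plan is to unwind the definitions and observe that the $\Pinf{\alpha}$ Scott sentence is exactly the right complexity to be transferred by the back-and-forth relation $\leq_\alpha$. Recall that, in the convention fixed earlier (see the remark following \cref{prop:treeofstructuresbf}), $\A\leq_\alpha\B$ means that every $\Pinf{\alpha}$ sentence true in $\A$ is also true in $\B$; applied to the empty tuple this is precisely the statement that $\Pinf{\alpha}$ truth is preserved from $\A$ to $\B$.

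First I would fix an arbitrary $\A\in K_1$ that possesses a $\Pinf{\alpha}$ Scott sentence $\sigma$, so in particular $\A\models\sigma$ and $\sigma$ characterizes $\A$ up to isomorphism among countable structures. By $\alpha$-universality of $K_0$ for $K_1$, there is some $\B\in K_0$ with $\A\leq_\alpha\B$. Since $\sigma\in\Pinf{\alpha}$ and $\A\models\sigma$, the relation $\A\leq_\alpha\B$ yields $\B\models\sigma$.

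Now, because $\sigma$ is a Scott sentence for $\A$, the fact that $\B\models\sigma$ forces $\B\cong\A$. As $K_0$ is isomorphism invariant and $\B\in K_0$, we conclude $\A\in K_0$, which is exactly the desired conclusion.

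There is essentially no hard step here; the argument is a one-line transfer once the definitions are in place. The only point that requires a moment of care is matching the direction of $\leq_\alpha$ to the complexity class of the Scott sentence: the relation $\A\leq_\alpha\B$ pushes $\Pinf{\alpha}$ truths \emph{upward} from $\A$ to $\B$, and it is precisely the $\Pinf{\alpha}$ (rather than $\Sinf{\alpha}$ or higher) complexity of $\sigma$ that lets us conclude $\B\models\sigma$ and hence $\B\cong\A$. This also makes clear why the hypothesis is stated for $\Pinf{\alpha}$ Scott sentences specifically, and why isomorphism invariance of $K_0$ is needed to finish.
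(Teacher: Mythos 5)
Your proof is correct and follows essentially the same argument as the paper: both hinge on the fact that $\A\leq_\alpha\B$ transfers the $\Pinf{\alpha}$ Scott sentence of $\A$ to $\B$, forcing $\A\cong\B$ and hence $\A\in K_0$ by isomorphism invariance. The only cosmetic difference is that the paper phrases it as a proof by contradiction while you argue directly, and you spell out the sentence-transfer step that the paper leaves implicit.
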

\begin{proof}
  Suppose otherwise and consider $\A\in K_1\setminus K_0$ with a $\Pinf{\alpha}$
  Scott sentence. By $\alpha$-universality, there is $\B\in K_0$ with
  $\A\leq_\alpha\B$. But then $\A\cong \B$ and thus $\A\in K_0$, a
  contradiction.
\end{proof}
\begin{corollary}
  If $K_0$ is $\alpha$-universal for $K_1$, then $K_0$ contains all strutures in
  $K_1$ of parameterless Scott rank less than $\alpha$.
\end{corollary}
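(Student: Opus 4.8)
The plan is to reduce the statement to \cref{lem:universalrankinvariant}: I would show that any $\A\in K_1$ with $\SR(\A)<\alpha$ automatically has a $\Pinf{\alpha}$ Scott sentence, after which that lemma immediately places $\A$ in $K_0$. So the whole task is to convert the rank hypothesis into the hypothesis of the previous lemma.

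The key input is the canonical Scott sentence construction. If $\SR(\A)=\beta$, then by definition every automorphism orbit of a tuple in $\A$ is $\Sinf{\beta}$-definable, and conjoining these orbit definitions with a sentence asserting that they are exhaustive yields a $\Pinf{\beta+1}$ Scott sentence for $\A$. Equivalently, this bound can be read directly off \cref{table:invariants}: each of the five values of $\SR$ appearing there carries a Scott sentence complexity contained in $\Pinf{\beta+1}$ (for instance $\Sinf{\beta}\subseteq\Pinf{\beta+1}$ and $\dSinf{\beta}\subseteq\Pinf{\beta+1}$), so in every case a structure of Scott rank $\beta$ admits a $\Pinf{\beta+1}$ Scott sentence.

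Next I would pass from $\beta+1$ to $\alpha$ using that the infinitary classes are cumulative, $\Pinf{\gamma}\subseteq\Pinf{\alpha}$ whenever $\gamma\le\alpha$. The inequality $\beta<\alpha$ forces $\beta+1\le\alpha$: if $\alpha=\gamma+1$ is a successor then $\beta\le\gamma$ and hence $\beta+1\le\alpha$, while if $\alpha$ is a limit then already $\beta+1<\alpha$. Consequently the $\Pinf{\beta+1}$ Scott sentence of $\A$ is in particular a $\Pinf{\alpha}$ Scott sentence, so $\A$ satisfies the hypothesis of \cref{lem:universalrankinvariant} and therefore lies in $K_0$. There is no substantive obstacle here; the corollary is essentially a repackaging of the lemma, and the only points needing a moment's care are confirming $\beta+1\le\alpha$ in both the successor and limit cases above, and making sure the canonical Scott sentence bound is invoked for precisely the parameterless rank $\SR$ used in this article, so that $\SR(\A)=\beta$ yields a $\Pinf{\beta+1}$ Scott sentence and not one a level higher.
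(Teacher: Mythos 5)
Your proof is correct and is essentially the paper's own (implicit) argument: the corollary is stated without proof precisely because, as you show, parameterless Scott rank $\beta<\alpha$ yields a $\Pinf{\beta+1}\subseteq\Pinf{\alpha}$ Scott sentence (the standard fact recorded in \cref{table:invariants}), after which \cref{lem:universalrankinvariant} applies verbatim. Your care about $\beta+1\le\alpha$ in both the successor and limit cases is exactly the right check and nothing further is needed.
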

\begin{lemma}\label{lem:universal}
  The class $K=\mathbb N\cup\{\Q\}$ is $2$-universal for $\LO$. In particular,
  $\{\Q\}$ is $2$-universal for infinite linear orderings.
\end{lemma}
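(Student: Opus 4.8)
The plan is to treat the finite and infinite linear orderings separately. If $\A\in\LO$ is finite, then $\A\in\mathbb N\subseteq K$ and $\A\leq_2\A$ by reflexivity, so $\B=\A$ witnesses the required inequality. Hence the whole statement reduces to the \emph{in particular} clause: every infinite $\A\in\LO$ satisfies $\A\leq_2\Q$. I would prove this directly from \cref{lem:bfandpartitions} and the remark following it.

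Fix an infinite $\A$. To show $\A\leq_2\Q$ it suffices, by \cref{lem:bfandpartitions} with $\alpha=2$ (so the only relevant index is $\beta=1$), to match every partition of $\Q$ into intervals with endpoints in $\Q$ by a partition of $\A$ into the same number of intervals, block by block under $\leq_1$. Equivalently, via the remark on parametrized tuples, for every $\bb=(b_0<\dots<b_m)\in\Q$ I must produce $\ba=(a_0<\dots<a_m)\in\A$ for which each open interval of $\Q$ cut out by $\bb$ is $\leq_1$ the corresponding open interval of $\A$ cut out by $\ba$. The key observation is that because $\Q$ is dense without endpoints, every one of these $\Q$-intervals---namely $(-\infty,b_0)$, each $(b_j,b_{j+1})$, and $(b_m,\infty)$---is infinite. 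By the first clause of \cref{lem:bfandpartitions}, an infinite block satisfies $X\leq_1 Y$ for \emph{any} block $Y$, so each of these comparisons holds automatically, whatever $\ba$ is. Since $\A$ is infinite it contains increasing tuples of every finite length, so a suitable $\ba$ of length $m+1$ exists; this yields $\A\leq_2\Q$.

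The only point needing care is bookkeeping about the partition rather than any genuine difficulty. In the partition form of \cref{lem:bfandpartitions} the cut points of $\Q$ appear as singleton blocks; density forces these singletons to be interior and pairwise non-adjacent and forces the two unbounded end-blocks to be infinite, so the singleton/infinite pattern of $\Q$ is reproducible in $\A$ simply by choosing the cut points $\ba$, with the blocks matched to infinite $\Q$-intervals allowed to be empty, since an infinite block satisfies $\leq_1$ against an empty one. Using the tuple form of the relation sidesteps this bookkeeping entirely, which is why I would phrase the argument through open intervals. I would note finally that the $\mathbb N$ summand is genuinely needed: a finite $\A$ has no increasing tuples past its size, so it fails $\A\leq_2\Q$ and must instead be captured as a member of $K$ itself.
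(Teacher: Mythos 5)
Your proof is correct and takes essentially the same approach as the paper: finite orderings are handled by membership in $\mathbb{N}\subseteq K$, and for infinite $\A$ the paper's argument is exactly your key observation---every interval of $\Q$ cut out by a tuple is infinite, hence $\leq_1$ any interval of $\A$, so $\A\leq_2\Q$ follows from \cref{lem:bfandpartitions}. Your extra remarks (the singleton-block bookkeeping and the necessity of the $\mathbb{N}$ summand) are correct but go beyond what the paper's three-line proof records.
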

\begin{proof}
  Let $\A\in \LO$. If $\A$ is finite then $\A\in K$. If
  $\A$ is infinite then we claim that $\A\leq_2\Q$. As any interval $I$ in
  $\Q$ is infinite we have that $I\leq_1\A_0$ for any interval $\A_0$ of $\A$.
  Thus, $\A\leq_2\Q$ and, hence, $K$ is $2$-universal.
\end{proof}
\subsection{$\Sinf{3}$ is not the Scott sentence complexity of a linear
ordering}\label{sec:notssc}
Alvir, Greenberg, Harrison-Trainor, and Turetsky~\cite{alvir2021} showed that no countable
structure has Scott sentence complexity $\Sinf{2}$ and that no
infinite countable structure has a $\dSinf{1}$ Scott sentence. Thus, the first $\Sigma$ Scott sentence complexity
attainable by a countable structure is $\Sinf{3}$. We are now ready to prove
that there are no linear orderings of this complexity.

Remmel~\cite{remmel1981a} showed that the computably categorical are preciesely
those linear orderings with finite adjacency relation. Since the structures with
$\Sinf{3}$ Scott sentences are precisely those that are relatively
$\bDelta^0_1$ categorical, unsurprisingly, they have a similar
characterization. 
\begin{theorem}\label{thm:remmel}
  A linear ordering has a $\Sinf{3}$ Scott sentence if and only if its adjacency
  relation is finite.
\end{theorem}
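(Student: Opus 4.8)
The plan is to prove both directions of the equivalence, relating the finiteness of the adjacency relation to the existence of a $\Sinf{3}$ Scott sentence. First I would handle the easier right-to-left direction: assuming the adjacency relation is finite, I would exhibit a $\Sinf{3}$ Scott sentence. A linear ordering with finitely many adjacencies decomposes into finitely many maximal discrete blocks (possibly with first/last endpoints) separated by densely-ordered regions. The key observation is that such an ordering is determined up to isomorphism by a finite amount of data: the finite adjacency structure together with the pattern of dense pieces and endpoints around it. I would write a $\Sinf{3}$ sentence that asserts the existence of finitely many witnesses realizing this adjacency pattern (an $\exists$ block) together with a $\Pinf{2}$ description (using \cref{lem:bfandpartitions}) saying that between and around these witnesses the ordering is dense without endpoints. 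Since $\Q$ has a $\Pinf{2}$ Scott sentence, bundling finitely many such conditions under one existential quantifier yields a $\Sinf{3}$ sentence.

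For the left-to-right direction, I would argue by contraposition: if the adjacency relation is infinite, then the ordering has no $\Sinf{3}$ Scott sentence. The intuition is that $\Sinf{3}$ Scott sentences correspond exactly to relatively $\bDelta^0_1$-categorical structures, and an infinite adjacency relation gives infinitely much ``local'' structure that cannot be pinned down by a single existential layer over a $\Pinf{2}$ core. Concretely, I would suppose for contradiction that there is a $\Sinf{3}$ Scott sentence $\exists \bar x\, \psi(\bar x)$ with $\psi \in \Pinf{2}$. The finite tuple $\bar x$ witnessing this sentence can only ``see'' finitely many of the infinitely many adjacent pairs. I would then construct a non-isomorphic linear ordering $\B$ that nonetheless satisfies the same sentence: take the witnessing tuple in $\A$, reproduce the finitely many adjacencies it controls, but alter the adjacency structure elsewhere (for instance, by inserting or deleting an adjacency far from the witnesses, exploiting that $\A \leq_2$ is governed by interval partitions via \cref{lem:bfandpartitions}). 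Since the witnesses satisfy the $\Pinf{2}$ formula $\psi$ in both structures, $\B$ models the Scott sentence but $\B \not\cong \A$, contradicting that it is a Scott sentence.

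The main obstacle I anticipate is the construction in the left-to-right direction: I must produce a genuinely non-isomorphic $\B$ satisfying the purported Scott sentence while carefully controlling the back-and-forth relation at level $2$ around the finite witness tuple. The subtlety is that a $\Pinf{2}$ formula $\psi(\bar x)$ can constrain the intervals determined by $\bar x$ up to $\leq_2$-equivalence (hence up to the coarse ``infinite vs.\ finite and as-large'' dichotomy and density of intervals from \cref{lem:bfandpartitions}), so I need the alteration to $\B$ to preserve every such interval's $\leq_1$-behavior while still changing the global adjacency count, thereby changing the isomorphism type. Exploiting that any interval cut by $\bar x$ is infinite (since the adjacency relation is infinite, there is unboundedly much room away from the finitely many witnessed adjacencies) should let me freely modify adjacencies inside such an interval without disturbing the $\Pinf{2}$ type of $\bar x$; verifying that the modification genuinely yields a non-isomorphic ordering, and that it respects the full $\Pinf{2}$ formula and not merely the adjacency predicate, is the delicate point to get right.
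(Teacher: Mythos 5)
Your right-to-left direction is fine and is essentially the paper's argument: take as parameters the finitely many elements of the adjacency field (together with any endpoints of the ordering, which you correctly remember to include), note that every remaining nonempty interval is then dense without endpoints, hence a copy of $\Q$, and bundle the relativized $\Pinf{2}$ descriptions under a single block of existential quantifiers. The paper phrases the same content through \cref{table:invariants}: $(\A,\bp)$ has a $\Pinf{2}$ Scott sentence, so $\A$ has a $\Sinf{3}$ one.

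The genuine gap is in your left-to-right direction, at precisely the step you flag as delicate. Your plan is to insert or delete a single adjacency far from the witnesses, ``thereby changing the global adjacency count.'' But in the situation you are in, the adjacency relation is \emph{infinite}, so inserting or deleting one adjacency does not change any count (it stays $\aleph_0$), and---worse---it need not change the isomorphism type at all. Concretely, let $\A=Sh(\{2,\,1+\Q+1\})$, the shuffle sum in which $2$-blocks and intervals of type $1+\Q+1$ are densely interleaved. Inserting a new $2$-block at a cut that is a limit from both sides just re-partitions the dense index sets, so the result is isomorphic to $\A$; and destroying an adjacency by inserting a copy of $\Q$ between the two points of a $2$-block turns that block into an interval of type $1+\Q+1$, which is again absorbed, giving a structure isomorphic to $\A$. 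For such $\A$ your construction produces $\B\cong\A$ and yields no contradiction. The repair is not a local modification but a wholesale one, and it is exactly what the paper's $2$-universality machinery accomplishes: given the witness $\ba$ of the purported Scott sentence $\exists\bx\,\psi(\bx)$, replace \emph{every} infinite interval induced by $\ba$ by a copy of $\Q$. By \cref{lem:universal} every infinite linear ordering is $\leq_2\Q$, so the interval-wise characterization following \cref{lem:bfandpartitions} gives $(\A,\ba)\leq_2(\B,\bb)$ and hence $\B\models\exists\bx\,\psi(\bx)$; but now $\B$ has only finitely many adjacencies while $\A$ has infinitely many, which is an honest isomorphism invariant, so $\B\not\cong\A$, the desired contradiction. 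With this replacement your contrapositive argument goes through and is, in substance, the paper's proof, which runs the same idea through \cref{table:invariants} and \cref{lem:universalrankinvariant} instead of manipulating the sentence directly.
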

\begin{proof}
  Say $\A\in \LO$ has a $\Sinf{3}$ Scott sentence, then by
  \cref{table:invariants} there is a
  parameter $\bp\in A^{<\omega}$ such that $(\A,\bp)$ has a $\Pinf{2}$ Scott
  sentence. By the $2$-universality of $\mathbb N\cup \{\Q\}$ every interval
  induced by $\bp$ is isomorphic to either $\Q$ or $n$. Hence, $\A$ has at
  most finitely many adjacencies.

  On the other hand, say that $Adj^\A$ is finite and let $\bp$ be the ordered
  tuple of elements in the field of $Adj^\A$. Then every interval induced by
  $\bp$ is either empty or isomorphic to $\Q$. As $\Q$ has a $\Pinf{2}$
  Scott sentence, $(\A,\bp)$ has parameterless Scott rank $1$, $\A$ has
  paramaterized Scott rank $2$, and, thus a $\Sinf{3}$ Scott sentence.
\end{proof}
\begin{theorem}\label{thm:nosigma3}
  No linear ordering has Scott sentence complexity $\Sinf{3}$.
\end{theorem}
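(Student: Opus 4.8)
The plan is to assume toward a contradiction that $SSC(\A)=\Sinf{3}$ and to derive that $\SR(\A)\le 2$, which contradicts the fact that, by the $\Sinf{\alpha+2}$ row of \cref{table:invariants} (with $\alpha=1$), $SSC(\A)=\Sinf{3}$ forces $\SR(\A)=3$. Since $SSC(\A)=\Sinf{3}$ means in particular that $\A$ has a $\Sinf{3}$ Scott sentence, \cref{thm:remmel} gives that $Adj^\A$ is finite, and the structure is infinite because it has Scott rank $3$. Fixing the field $\bp=(p_1<\dots<p_k)$ of $Adj^\A$ and invoking the $2$-universality of $\mathbb N\cup\{\Q\}$ (\cref{lem:universal}) exactly as in the proof of \cref{thm:remmel}, the ordering decomposes as $I_0+p_1+I_1+\dots+p_k+I_k$, where each open interval $I_j$ is either empty or isomorphic to $\Q$.

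The heart of the argument is to show that every automorphism orbit of $\A$ is $\Sinf{2}$-definable, so that $\SR(\A)\le 2$. First I would pin down each field element. Being in the field of $Adj^\A$ is expressed by $\beta(x):=\exists y\,(Adj(x,y)\lor Adj(y,x))$, which is $\Sinf{2}$ since $Adj$ is $\Pinf{1}$. Because automorphisms preserve $Adj^\A$, they preserve its finite, linearly ordered field setwise and hence fix each $p_m$ pointwise; so it suffices to locate $p_m$ among the finitely many field elements. The subtle point is that the naive description ``there are exactly $m-1$ field elements below $x$'' has an ``at most'' half that is $\Pinf{2}$ and would only yield a $\dSinf{2}$ definition -- precisely the gap that could push the Scott rank to $3$. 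This is avoided using that the total number $k$ is fixed: the formula
\[\psi_m(x):=\beta(x)\land \Big(\exists y_1<\dots<y_{m-1}<x\ \textstyle\bigwedge_i \beta(y_i)\Big)\land\Big(\exists z_1<\dots<z_{k-m}>x\ \textstyle\bigwedge_j \beta(z_j)\Big)\]
is a conjunction of $\Sinf{2}$ formulas, hence $\Sinf{2}$, and is satisfied exactly by $p_m$, since a field element with at least $m-1$ field elements below it and at least $k-m$ above it must be the $m$-th one. Two-sided existential counting against a fixed total keeps the definition purely $\Sinf{2}$.

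Granting this, I would define the orbit of an arbitrary tuple $\ba=(a_1<\dots<a_n)$ by a finite conjunction of $\Sinf{2}$ formulas: for a coordinate equal to some $p_m$ use $\psi_m$; for a coordinate lying in an interval $I_j$ use $\exists u,v\,(\psi_j(u)\land\psi_{j+1}(v)\land u<x<v)$, with the evident one-sided variants for $I_0$ and $I_k$, together with the quantifier-free clauses recording the relative order of coordinates lying in a common interval. As $\Sinf{2}$ is closed under finite conjunction and existential quantification, this formula is $\Sinf{2}$. I would then check that it defines exactly the orbit: any tuple satisfying it has literally the same field coordinates and places its remaining coordinates in the same $\Q$-intervals in the same order, so the ultrahomogeneity of each $I_j\cong\Q$ together with the rigidity of the field assembles the desired automorphism. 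Thus every orbit is $\Sinf{2}$-definable and $\SR(\A)\le 2$, contradicting $\SR(\A)=3$.

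The main obstacle is exactly this counting step: one must resist the $\dSinf{2}$ ``exactly $m-1$ below'' description and instead exploit the fixed, finite size of the field of $Adj^\A$ to phrase position with two existential (hence $\Sinf{2}$) bounds, one from below and one from above. Everything else -- the decomposition into points and copies of $\Q$, and the closure of $\Sinf{2}$ under the operations used -- is routine.
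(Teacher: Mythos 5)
Your overall strategy is the same as the paper's: use \cref{thm:remmel} to reduce to linear orderings with finite adjacency relation, then show that every such ordering has all automorphism orbits $\Sinf{2}$-definable (equivalently, a $\Pinf{3}$ Scott sentence), which is incompatible with $SSC(\A)=\Sinf{3}$. Your device for pinning down the $m$-th field element of $Adj^\A$ by two existential counts against the fixed total $k$ is correct and is a pleasant alternative to the paper's anchoring of successor chains at the least element.

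There is, however, a genuine gap in your decomposition step. It is not true that every interval induced by the field $\bp$ of $Adj^\A$ is empty or isomorphic to $\Q$: an endpoint of $\A$ need not participate in any adjacency, so the outer intervals can have endpoints. The simplest counterexample is $\A=1+\Q$, whose adjacency field is empty; with a nonempty field, $\A=1+\Q+2+\Q$ has field consisting only of the two middle points and $I_0\cong 1+\Q$. (Note that the $2$-universality argument in \cref{thm:remmel} yields the finite-or-$\Q$ interval structure only for the parameter $\bq$ witnessing the $\Pinf{2}$ Scott sentence of $(\A,\bq)$, which need not be the field of $Adj^\A$; this is why the paper's own proof of \cref{thm:nosigma3} explicitly lists $1+\Q$, $\Q+1$, $1+\Q+1$ and their chain-replacements as separate cases.) The failure matters for your verification: in $\A=1+\Q+2+\Q$ your formula $\exists v\,(\psi_1(v)\land x<v)$ for a coordinate in $I_0$ is satisfied both by the least element of $\A$ and by every other element of $I_0$, and these are not automorphic, so the proposed formula does not define an orbit and $\SR(\A)\le 2$ is not established for such candidates -- which are precisely among the orderings that \cref{thm:remmel} obliges you to rule out. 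The repair is routine: enlarge the anchor set to include the least and greatest elements of $\A$ when they exist; these have singleton orbits defined by the $\Pinf{1}$ (hence $\Sinf{2}$) formulas $\forall y\,(x\le y)$ and $\forall y\,(y\le x)$, and relative to the enlarged anchor set every induced interval really is empty or isomorphic to $\Q$ (a nonempty one has no adjacencies and no first or last point), after which your homogeneity argument goes through unchanged.
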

\begin{proof}
  The only candidates for linear orderings with Scott sentence complexity
  $\Sinf{3}$ are by \cref{thm:remmel} those with finite adjacency relation. Now,
  these linear orderings are either finite, isomorphic to $\Q$, $1+\Q$,
  $\Q+1$ or $1+\Q+1$, or isomorphic to a linear ordering obtained by
  replacing finitely many points in one of the four latter orderings by chains
  of finitely many points. All but the last case are well-known to have
  $\Pinf{3}$ Scott sentences and thus can not have Scott sentence complexity
  $\Sinf{3}$. Consider an ordering of order type in the last case, and that,
  without loss of generality, its order type is $n_1+\Q+n_2+\Q+n_3+\Q+\dots+n_k$. Assume that
  $x$ is the $j$th element in the ordered tuple containing all the successor
  chains and in the block $n_l$. Then its orbit is defined by

  \[\exists x_1\dots x_{j}\forall y\left(y\geq x_1\land x_{j}=x \land \bigwedge_{i<j,i\not\in \{
    n_1,n_1+n_2,\dots,\dots+n_l\}} S(x_i,x_{i+1})\right)\]
  where $S$ is the $\Pi^0_1$ definable successor relation. This definition is
  $\Sinf{2}$ and clearly defines the orbit of elements in the successor chains.
  Now, let $x$ be an element in the $j$th $\Q$ copy where $i=\sum_{l\leq j}
  n_l$ and $N=\sum_{l\leq k} n_l$. Then its automorphism
  orbit is defined by
  \[ \exists x_1\dots x_{N}\left(\bigwedge_{j<N, j\not\in \{
  n_1,n_1+n_2,\dots,\dots+n_l\}} S(x_j,x_{j+1}) \land x_{i}<x<x_{i+1}\right)\]
  and this definition is again $\Sinf{2}$. It is easy to see that the
  defining formula of the automorphism orbit of any ordered tuple is just the
  conjunction of the defining formulas of the automorphism orbits of its
  elements. Thus, $n_1+\Q+n_2+\Q+\dots+n_k$ has a $\Pinf{3}$ Scott sentence.
\end{proof}

\subsection{3-universality and optimality of the Friedman-Stanley embedding}
\begin{theorem}\label{thm:3universal}
The following class is $3$-universal for the class of linear orderings
$K=\{\omega+k, k+\omega^*, k+\Z+k':k,k'\in\mathbb N\}\cup\{\omega+\omega^*\}\cup
\{(\sum_{i\in k} n_i + m_i\cdot\Q )+n_{k+1}:n_i,m_i\in\mathbb N,
n_i>m_{i-1},m_{i+1}\}\cup\{k:k\in\mathbb N\}$. There is no $3$-universal class
$J\subset K$.
\end{theorem}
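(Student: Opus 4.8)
This statement has two halves, and the plan is to treat them separately: that $K$ is $3$-universal for $\LO$, and that no proper subclass $J\subsetneq K$ is. For universality I will show that every $\A\in\LO$ satisfies $\A\leq_3\B$ for a suitable ``clean'' representative $\B\in K$, using the one-round partition characterization of $\leq_3$ in \cref{lem:bfandpartitions} together with the $2$-universality of $\mathbb N\cup\{\Q\}$ from \cref{lem:universal}. For the minimality clause I will invoke \cref{lem:universalrankinvariant}.

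The idea behind universality is that $\B$ should collapse exactly the features of $\A$ that are only visible at the $\Sinf{3}$ level. I would first read off from $\A$ the data that a single round of the partition game can expose: whether $\A$ has a least and a greatest element, the length (in $\{1,2,\dots\}\cup\{\omega\}$) of the maximal finite adjacency chain at each end, whether each end is finite, of type $\omega$, $\omega^*$, or $\Z$, or dense, and the pattern of finite blocks in the interior. To each configuration I assign the member of $K$ realizing the same endpoint data together with the $\leq_3$-supremum of $\A$'s block behavior: for example $\omega+k$ when $\A$ has a least element with an infinite initial chain and a greatest element capping a chain of length $k$, $k+\Z+k'$ when both ends are infinite and two-sided, and a member of the dense family when the interior is a shuffle of finite blocks. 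The point is that $\A$'s $\Sinf{3}$-level defects (an interior limit point, infinitely many dense copies of a $\Z$-block, etc.) are invisible in the direction $\Pinf{3}(\A)\subseteq\Pinf{3}(\B)$, so the cleaner $\B$ still dominates.

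To verify $\A\leq_3\B$ I would run \cref{lem:bfandpartitions} at $\alpha=3$: given any partition of $\B$ cut out by parameters, produce a matching partition of $\A$ and check $\B_i\leq_2\A_i$ piecewise. A finite piece of $\B$ must be matched by a finite piece of $\A$ of the \emph{same} length (placing $\A$'s parameters accordingly), and an empty end-ray of $\B$---forced when a parameter sits at an endpoint---must be matched by an empty end-ray of $\A$; both are possible exactly because $\B$ was chosen with the same endpoint data as $\A$ and dominated by $\A$'s local chain lengths. An infinite piece of $\B$ is matched by the corresponding infinite region of $\A$, and there $\B_i\leq_2\A_i$ holds because the clean piece $\B_i$ shares with $\A_i$ the $\Pinf{2}$-relevant features (presence of endpoints, density), the dense case being exactly the $2$-universality of $\mathbb N\cup\{\Q\}$ (\cref{lem:universal}). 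I expect this endpoint-and-block bookkeeping---guaranteeing that the $\leq_2$ subgames never strand $\A$ with a piece that is too short, of the wrong one-sided type, or lacking a needed endpoint---to be the main obstacle, and the reason the list defining $K$ has to be exactly as stated.

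For the minimality clause, \cref{lem:universalrankinvariant} says every $3$-universal class contains each linear ordering with a $\Pinf{3}$ Scott sentence. It therefore suffices to show that every member of $K$ has a Scott sentence of complexity at most $\Pinf{3}$ and that the members are pairwise non-isomorphic; then any $3$-universal $J\subseteq K$ already contains all of $K$, so no proper $J\subsetneq K$ can be $3$-universal. The finite and finite-adjacency members are covered by \cref{thm:remmel,thm:nosigma3}, which exhibit $\Pinf{3}$ Scott sentences for them, and for the members carrying an infinite discrete block I would write explicit $\Pinf{3}$ Scott sentences. For instance, $\omega+\omega^*$ is pinned down by ``there is a least and a greatest element'', ``every non-greatest element has an immediate successor and every non-least element an immediate predecessor'', and a $\Pinf{3}$ condition stating that every element has only finitely many predecessors or only finitely many successors (which forbids a third block); each conjunct is $\Pinf{3}$ or simpler, and one checks the conjunction forces $\omega+\omega^*$, with $\omega+k$, $k+\omega^*$, and $k+\Z+k'$ treated analogously. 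The point needing the most care is confirming that these discrete-block orderings have Scott complexity at most $\Pinf{3}$ rather than higher, which is what makes $K$ the minimum $3$-universal class.
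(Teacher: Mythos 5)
Your outline matches the paper's strategy---universality verified through \cref{lem:bfandpartitions}, minimality via \cref{lem:universalrankinvariant} together with $\Pinf{3}$ Scott sentences for the members of $K$---and your minimality half is essentially the paper's own argument. The problem is the universality half, which is the real content of the theorem: it is not carried out, and the one concrete tool you cite for it points the wrong way. \cref{lem:universal} says that every infinite linear ordering $\A_i$ satisfies $\A_i\leq_2\Q$; what the verification of $\A\leq_3\B$ demands is the \emph{reverse} inequality $\B_i\leq_2\A_i$ for each clean piece $\B_i$, e.g.\ $\Q\leq_2\A_i$ or $m\cdot\Q\leq_2\A_i$. That reversed statement is not implied by \cref{lem:universal} and is false in general: $\Q\not\leq_2 1+\Q$, since the partition of $1+\Q$ at its least element produces an empty initial interval, whereas every one-point partition of $\Q$ has an infinite initial interval, and an empty set is neither infinite nor at least as large as an infinite one, so the required $\leq_1$ fails. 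Indeed $\Q\leq_2\A_i$ forces $\A_i$ to have no least or greatest element, so ``sharing $\Pinf{2}$-relevant features'' with an infinite piece cannot substitute for an argument: one must choose the partition points of $\A$ so that each messy piece genuinely dominates every sub-partition of its clean counterpart, and that is where all the work lies.

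Moreover, the case that dictates the shape of $K$---both the well-ordered initial part $W$ and the reverse well-ordered final part $R$ finite---requires a dichotomy your sketch never states. Writing $\L=W+\K+R$: if the interior $\K$ contains arbitrarily long successor chains, then $\K\leq_3\Z$, giving $\L\leq_3 k+\Z+k'$; if instead block sizes in $\K$ are bounded, then $\K/{\sim}\cong\Q$, and one must split off the block sizes occurring only finitely often as the $n_i$'s and identify, on each remaining dense segment, the maximal block size $m_i$ that occurs densely there. The key inequality $\K_i\leq_3 m_i\cdot\Q$ then holds precisely because on the clean side \emph{every} block has the maximal size $m_i$, so the $\exists$-player can always answer a partition with intervals no larger than the $\forall$-player's; this is exactly what the constraint $n_i>m_{i-1},m_{i+1}$ in the definition of $K$ encodes. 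Your proposal explicitly defers this bookkeeping as ``the main obstacle,'' but that bookkeeping, together with the dichotomy and the corrected direction of the $\leq_2$ subgames, \emph{is} the proof; as written, the universality claim has not been established.
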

\begin{proof}
Given a linear order $\L$ we can write $\L=W+\K+R$ with $W$ well ordered or
empty, $R$ reverse well ordered or empty, and $\K$ empty or without a greatest or least element.

The element in $K$ that $\L$ is $\leq_3$-below will depend on the sizes
of $W$, and $R$ and the order type of $\K$. We will deal with these cases
independently.

\emph{Case 1: $W$ is infinite and $R$ is finite.}
In this case, $W=\omega+\alpha$ for some ordinal $\alpha$, so $\L=\omega+\K'+k$
for some $k\in \mathbb{N}$ and $\K'$ is either empty or without greatest element.
We claim that $\L\leq_3\omega+k$.
It is sufficient to show that $\omega+\K'\leq_3\omega$.
The $\exists$-player always wins the corresponding back-and-forth game by
playing the following strategy. In the first round they play isomorphically on
the initial $\omega$. Because of that they only need to win the back-and-forth game for $\omega\leq_2 \omega+\K'$. Say the $\forall$-player picks elements such that
$\omega+\K'$ is partitioned into $n$ intervals $\A_i$, each of cardinality $m_i$ for
$i<n$. The $\exists$-player must pick a partition of $\omega$ such that for all intervals $\B_i$, each of
cardinality $k_i$, $\A_i\leq_1 \B_i$. This inequality holds if and only if
$m_i\geq k_i$. Given this inequality, the intervals formed by picking the first $n$ elements of  $\omega$ 
clearly form the desired partition.

\emph{Case 2: $W$ is finite and $R$ is infinite.} This case is analogous to the
first case.

\emph{Case 3: Both W and R are infinite.}
In this case, we can write $\L=\omega+\K'+\omega^*$.
We claim that $\L\leq_3 \omega+\omega^*$. The winning strategy for the
$\exists$-player is similar to the one for case 1. In the first play they play
isomorphically to reduce the game to showing that $\omega+\omega^*\leq_2
\omega+\K'+\omega^*$. They can then win this game by picking partitions with
smaller intervals, exactly as above.

\emph{Case 3: Both W and R are finite.}
There are two subcases with different behavior here.
In both we assume that $\K$ has no greatest and least element, as if it was
empty, then $\L$ would be finite and thus isomorphic to a structure in $K$.

\emph{Subcase 1: There are arbitrarily large successor chains in $\K$.} Here we
claim that $\K\leq_3\Z$, which gives that $\L\leq_3 k+\Z+k'$ for $k,k'\in\mathbb{N}$.
The $\exists$-player always wins the corresponding game by playing the following
strategy. Assume the $\forall$-player plays an ordered tuple with the distance
between least and greatest element $n$. Then the $\exists$-player plays a
successor chain of size $n$ in $\K$ such that $\K=\K_1+n+\K_2$ where both $\K_1$
and $\K_2$ are infinite. It is then sufficient to show that $\omega^*\leq_2
\K_1$ and $\omega\leq_2 \K_2$. In these games, similar to the cases above, the
$\exists$-player just needs to ensure that they play partitions of smaller size
then the $\forall$-player and this is possible by the same strategy.

\emph{Subcase 2: There is an $n\in\mathbb N$ such that no element in $\K$ has
$n$ successors.} In this case, we consider the block relation $\sim$ given by
$a\sim b$ if and only if $[a,b]$ and $[b,a]$ are finite. By assumption every
block is a linear order of size $n$ or less. As no block can be infinite, no two
blocks can be successors in the quotient $\K/{\sim}$. Furthermore, as there is
no least or greatest element, there are no least or greatest blocks in
$\K/{\sim}$. Therefore, $\K/{\sim}=\Q$. In total, this means that
$\K=\sum_{q\in\Q} i_n(q)$ where $i_n:\Q\to\{1,\cdots, n\}$. If there are only
finitely many blocks of size $n$ , we may write $\K$ as a sum
$\K_1+n+\cdots+n+\K_l$ where no block of size $n$ occurs in any of the $\K_i$.
Repeating this process as necessary, we can write $\K$ as $\K_1+n_1+\K_2+n_2+\dots
\K_l$ where each $\K_i$ is given by $\K_i=\sum_{q\in(b_i,t_i)} i_n(q)$ such
that $\{ q\in (b_i,t_i): i_n(q)=m_i\}$ is infinite for $m_i=\max{\text{range}(i_n\restrict (b_i,t_i))}$.
Furthermore, we have that $m_i>n_i$ and if $i>1$, $m_i>n_{i-1}$.

We now show that $\L=k+\K_1+n_1+\dots
\K_l+k'\leq_3 k+m_1\cdot \Q+n_1\dots m_l\cdot \Q+k'$.
It is enough to show that $\K_i=\sum_{q\in(b_i,t_i)}i_n(q) \leq_3 m_i\cdot\Q$.
The $\exists$-player always wins this game by playing the following strategy.
Every element the $\forall$-player plays in the first play is in a block of size
$m_i$. The $\exists$-player can respond by playing elements in the same position
in blocks of size $m_i$. The resulting intervals in $\K_i$ are of the form
$l_1+\sum_{q\in (b_i^j,t_i^j)}i_n(q)+l_2$ where $b_i^j> b_i$ and $t_i^j<t_i$.
The corresponding intervals in $m_i\cdot \Q$ are of the form $l_1+m_i\cdot
\Q+l_2$. Winning the game thus comes down to finding a winning strategy for the
game $m_i\cdot \Q\leq_2 \sum_{q\in (b_i^j,t_i^j)}i_n(q)$. Given a partition in
$\sum_{q\in (b_i^j,t_i^j)}i_n(q)$, the $\exists$-player needs to find a
partition in $m_i\cdot \Q$ such that all intervals are smaller than the intervals
in the partition picked by the $\forall$-player. They can do
that using the fact that the finite blocks are densely ordered and always of maximum size.

This shows that $K$ is $3$-universal. Standard arguments show that each
element of $K$ has a $\Pinf3$ Scott sentence. Therefore, by \cref{lem:universalrankinvariant},
there is no $J\subset K$ such that $J$ is $3$-universal.
\end{proof}
Note that McCoy~\cite{mccoy2003} obtained a similar characterization for the linear orderings that are
$\Delta^0_2$ categorical. \cref{thm:3universal} can be used to show a boldface
version of this result.
\begin{corollary}\label{cor:pinf3ss}
  If $\L$ is a linear ordering with a $\Pinf{3}$ Scott sentence, then $\L$ is in
  $K$. 
\end{corollary}
\begin{corollary}\label{cor:sinf4ss}
  If $\L$ is a linear ordering with a $\Sinf{4}$ Scott sentence, then $\L$ is a
  finite sum of linear orderings in $K$.
\end{corollary}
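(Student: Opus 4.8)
The plan is to follow the same route as the proof of \cref{thm:remmel}, pushing the parameter trick one back-and-forth level higher. Since $\L$ has a $\Sinf{4}=\Sinf{2+2}$ Scott sentence, \cref{table:invariants} gives $\pSR(\L)\leq 2$, so I may fix a parameter tuple $\bp$ such that $(\L,\bp)$ has parameterless Scott rank at most $2$, equivalently a $\Pinf{3}$ Scott sentence. Ordering $\bp=(p_1<\dots<p_n)$, these parameters cut $\L$ into the intervals $I_0=(-\infty,p_1)$, $I_1=(p_1,p_2)$, \dots, $I_n=(p_n,\infty)$ (some possibly empty), so that $\L=I_0+\{p_1\}+I_1+\dots+\{p_n\}+I_n$. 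The goal is to show that each open interval $I_j$ lies in $K$; since each singleton $\{p_i\}$ is a one-element linear ordering and hence lies in $K$, this exhibits $\L$ as a finite sum of members of $K$.

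The key step is a localization lemma: if $(\L,\bp)$ has a $\Pinf{\alpha}$ Scott sentence, then every interval $I_j$ induced by $\bp$ has a $\Pinf{\alpha}$ Scott sentence. I would prove this using the interval decomposition of the back-and-forth relations recorded after \cref{lem:bfandpartitions}. Recall that $I_j$ has a $\Pinf{\alpha}$ Scott sentence exactly when $I_j\leq_\alpha M$ implies $I_j\cong M$ for every linear ordering $M$. So suppose $I_j\leq_\alpha M$ and form $\L'$ by replacing the block $I_j$ in $\L$ by $M$, keeping $\bp$ in the same positions. All other intervals are unchanged and $I_j\leq_\alpha M$, so the decomposition yields $(\L,\bp)\leq_\alpha(\L',\bp)$. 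As $(\L,\bp)$ has a $\Pinf{\alpha}$ Scott sentence, this forces $(\L,\bp)\cong(\L',\bp)$; any such isomorphism fixes the named points $\bp$ in order, hence carries the $j$-th interval of $\L$ onto the $j$-th interval of $\L'$, giving $I_j\cong M$. Applying this with $\alpha=3$, every $I_j$ has a $\Pinf{3}$ Scott sentence, so by \cref{cor:pinf3ss} every nonempty $I_j\in K$, which completes the argument.

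The only real obstacle is the bookkeeping in the localization lemma, and in particular ensuring that the decomposition yields the genuinely asymmetric $\Pinf{\alpha}$ Scott sentence for $I_j$ rather than merely a $\dSinf{\alpha}$ one; this is why the argument is phrased using the one-sided characterization $I_j\leq_\alpha M\Rightarrow I_j\cong M$ and the relation $(\L,\bp)\leq_\alpha(\L',\bp)$ in that direction, rather than through $\equiv_\alpha$. A secondary point to state carefully is the passage from ``$\L$ has a $\Sinf{4}$ Scott sentence'' to ``$\pSR(\L)\leq 2$'': having a $\Sinf{4}$ Scott sentence only bounds $SSC(\L)$ from above, so one checks against \cref{table:invariants} that every Scott sentence complexity below $\Sinf{4}$ still has parameterized Scott rank at most $2$, whence a suitable $\bp$ always exists.
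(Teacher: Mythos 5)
Your proof is correct and is essentially the paper's own argument: the paper states \cref{cor:sinf4ss} without an explicit proof, the intended derivation being precisely the proof of \cref{thm:remmel} run one level higher---\cref{table:invariants} yields a parameter $\bp$ such that $(\L,\bp)$ has a $\Pinf{3}$ Scott sentence, the interval decomposition of the back-and-forth relations localizes this to the intervals cut out by $\bp$, and $3$-universality (\cref{cor:pinf3ss}) places each such interval in $K$. Your write-up additionally fills in the localization step (the replacement trick, with the correct one-sided use of $\leq_3$) that the paper leaves implicit in the proof of \cref{thm:remmel}, so nothing is missing.
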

That the Friedman-Stanley embedding is optimal for Scott sentence
complexity now follows easily from \cref{cor:sinf4ss} by cardinality
considerations.
\begin{theorem}\label{thm:optimalembedding}
   There is no Turing computable embedding $\Phi:Mod(\tau)
   \to \LO$ such that for all $\A\in Mod(\tau)$, $SSC(\A)=\Pinf{2}$ if and only
   if $SSC(\Phi(\A))\in\{ \Sinf{4}, \dSinf{3}, \Pinf{3}\}$ for any countable
   vocabulary $\tau$.
\end{theorem}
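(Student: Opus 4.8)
The plan is to argue by a cardinality count, exactly as announced before the statement: the target complexities $\{\Sinf 4,\dSinf 3,\Pinf 3\}$ are realized by only countably many linear orderings, while the source complexity $\Pinf 2$ is realized by continuum many structures in a suitably rich vocabulary. Since a Turing computable embedding induces an injection on isomorphism types, no $\Phi$ can satisfy the biconditional. First I would bound the target. Every linear ordering with Scott sentence complexity in $\{\Sinf 4,\dSinf 3,\Pinf 3\}$ has a $\Sinf 4$ Scott sentence, since $\Pinf 3,\dSinf 3\subseteq\Sinf 4$ and $\Sinf 4$ is closed under finite conjunction. By \cref{cor:sinf4ss} each such ordering is a finite sum of elements of $K$, and as $K$ is countable and the set of finite sequences over a countable set is countable, there are only countably many isomorphism types of linear orderings with Scott sentence complexity in $\{\Sinf 4,\dSinf 3,\Pinf 3\}$.

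Next I would exhibit continuum many pairwise non-isomorphic structures of Scott sentence complexity $\Pinf 2$; it suffices to name one countable vocabulary witnessing the failure, so I take $\tau=\{P_n:n\in\omega\}$ of unary predicates. For each infinite $S\subseteq\omega$ let $\A_S$ be the structure in which, for each $n\in S$, infinitely many elements satisfy $P_n$ and no other predicate, while no element satisfies $P_n$ for $n\notin S$. Distinct $S$ give non-isomorphic $\A_S$, so $\{\A_S:S\subseteq\omega\text{ infinite}\}$ has size $2^{\aleph_0}$. Each $\A_S$ has the Scott sentence obtained as the conjunction of ``every element satisfies exactly one predicate'', ``for each $n\in S$ there are infinitely many elements satisfying $P_n$'', and ``for each $n\notin S$ no element satisfies $P_n$''; a direct inspection shows this sentence is $\Pinf 2$, and it is a genuine Scott sentence because these clauses determine $\A_S$ up to isomorphism. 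Moreover no simpler complexity is possible: by~\cite{alvir2021} no infinite structure has a $\dSinf 1$ or $\Sinf 2$ Scott sentence, and $\A_S$ has no $\Pinf 1$ Scott sentence, since the substructure consisting of exactly one element satisfying $P_n$ for each $n\in S$ satisfies every universal sentence true in $\A_S$ yet is not isomorphic to it. Hence $SSC(\A_S)=\Pinf 2$.

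Finally, suppose for contradiction that $\Phi:Mod(\tau)\to\LO$ is a Turing computable embedding satisfying the biconditional. As $\Phi$ is a reduction we have $\A\cong\B\iff\Phi(\A)\cong\Phi(\B)$, so $\Phi$ is injective on isomorphism types. By the forward direction of the biconditional each $\Phi(\A_S)$ has Scott sentence complexity in $\{\Sinf 4,\dSinf 3,\Pinf 3\}$, so $\Phi$ restricts to an injection from the $2^{\aleph_0}$ isomorphism types $\{[\A_S]:S\subseteq\omega\text{ infinite}\}$ into the countably many isomorphism types found in the first step, which is absurd. I expect the main obstacle to be the verification in the second step that the witnessing family has Scott sentence complexity exactly $\Pinf 2$---in particular excluding the simpler complexities $\Pinf 1$, $\dSinf 1$, $\Sinf 1$ and $\Sinf 2$---whereas the cardinality bookkeeping and the injectivity of $\Phi$ are routine.
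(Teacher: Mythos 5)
Your proposal is correct and takes essentially the same route as the paper: both arguments bound the target side by noting that every linear ordering of complexity in $\{\Sinf{4},\dSinf{3},\Pinf{3}\}$ has a $\Sinf{4}$ Scott sentence, so by \cref{cor:sinf4ss} there are only countably many such orderings, and then exhibit continuum many pairwise non-isomorphic structures of Scott sentence complexity $\Pinf{2}$ to derive a cardinality contradiction. The only difference is the choice of witnessing family---the paper takes $\tau$ to be a single binary relation and uses graphs consisting of disconnected cycles of sizes coded by a set $X\subseteq\omega$, whereas you use infinitely many unary predicates---and your explicit verification that the complexity is exactly $\Pinf{2}$ (ruling out $\Pinf{1}$, $\dSinf{1}$, $\Sinf{2}$ via the substructure argument and \cite{alvir2021}) spells out a detail the paper leaves implicit.
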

\begin{proof}
   We may assume that $\tau$ contains only one binary relation symbol, i.e.,
   that the $\tau$-structures are graphs. Notice that there are continuum many
   graphs with Scott sentence complexity $\Pinf{2}$: For any $X\subseteq \omega$
   the graph consisting of a disconnected cycle of size $n+2$ for every $n\in X$ has a
   $\Pinf{2}$ Scott sentence. However, by \cref{cor:sinf4ss} there
   are only countably many linear orderings with a $\Sinf{4}$ Scott sentence. As any potential
   embedding $\Phi$ needs to respect isomorphism, it cannot map every graph
   with a $\Pinf{2}$ Scott sentence to a linear ordering with a Scott sentence
   of complexity $\{\Sinf{4}, \dSinf{3},\Pinf{3}\}$.
\end{proof}
\begin{proposition}
  There is no countable class of $4$-universal linear orderings.
\end{proposition}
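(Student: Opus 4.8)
The statement to prove is that there is no countable class of $4$-universal linear orderings. The plan is to derive this from a cardinality argument directly parallel to the one used in \cref{thm:optimalembedding}, combined with \cref{lem:universalrankinvariant}. The key observation is that $4$-universality is a very strong requirement: by \cref{lem:universalrankinvariant}, any $4$-universal class $K_0$ for $\LO$ must contain \emph{every} linear ordering with a $\Pinf{4}$ Scott sentence. So it suffices to exhibit uncountably many pairwise non-isomorphic linear orderings each of which has a $\Pinf{4}$ Scott sentence.

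First I would produce such an uncountable family. The natural candidates are orderings built by summing the basic blocks appearing in the $3$-universal class $K$ of \cref{thm:3universal}. For instance, for each $X\subseteq\omega$ one can form a linear ordering $\L_X$ as an $\omega$-indexed sum $\sum_{n\in\omega}B_n$ where $B_n$ is a block of one order type if $n\in X$ and of another (non-isomorphic, but $\equiv_3$-distinguishable) order type if $n\notin X$, with separators chosen so that the summands and their positions are recoverable up to isomorphism. The point is that each such $\L_X$ should admit a $\Pinf{4}$ Scott sentence: each block has a $\Pinf{3}$ Scott sentence by \cref{cor:pinf3ss}, the finitely-bounded local structure around each element gives $\Sinf{3}$-definable orbits, and assembling these into a description of the whole ordering costs one more quantifier alternation, landing at $\Pinf{4}$. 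Distinct subsets $X\neq Y$ of $\omega$ give non-isomorphic $\L_X\not\cong\L_Y$, since the pattern of block types along the ordering is an isomorphism invariant.

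With this family in hand, the argument closes quickly: suppose for contradiction that $K_0$ is a countable $4$-universal class for $\LO$. By \cref{lem:universalrankinvariant}, $K_0$ contains every member of the family $\{\L_X : X\subseteq\omega\}$, since each has a $\Pinf{4}$ Scott sentence. But this family has size $2^{\aleph_0}$ and consists of pairwise non-isomorphic orderings, so $K_0$ must contain uncountably many isomorphism types, contradicting countability of $K_0$. Hence no countable $4$-universal class exists.

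The main obstacle I expect is verifying the exact complexity claim, namely that each $\L_X$ genuinely has a $\Pinf{4}$ (and not higher) Scott sentence; this is where one must be careful about how the blocks are separated so that the global ordering pattern is first-order-over-the-infinitary-description recoverable without paying extra alternations. The cleanest route is probably to choose the $B_n$ so that $(\L_X,\bp)$ has parameterless Scott rank $3$ after naming suitable boundary parameters — mirroring the role finite adjacency tuples played in \cref{thm:remmel} — and then read off $SSC(\L_X)$ from \cref{table:invariants}. An even lighter alternative that sidesteps fixing the exact complexity is to build the family so that each member merely has \emph{some} $\Pinf{4}$ Scott sentence (an upper bound suffices for \cref{lem:universalrankinvariant}); establishing only an upper bound is routine and avoids the delicate lower-bound computation entirely.
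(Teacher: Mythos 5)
Your proof is correct in outline and takes a genuinely different route from the paper's. The paper gets its uncountable family of linear orderings with $\Pinf{4}$ Scott sentences for free from the machinery of its first section: it takes the continuum many graphs with Scott sentence complexity $\Pinf{2}$ (disjoint unions of cycles coding $X\subseteq\omega$) already used in \cref{thm:optimalembedding} and pushes them through the Friedman--Stanley embedding, which by \cref{thm:fsandscottrank} sends $\Pinf{2}$ to $\Pinf{4}$; then, exactly as you do, it invokes \cref{lem:universalrankinvariant} and counts. You instead build the family directly inside $\LO$, e.g.\ $\L_X=\sum_{n\in\omega}(\Q+a_n)$ with $a_n\in\{2,3\}$ coding $X$; this works, and the required upper bound is indeed routine: the orbit of an element is pinned down by existentially quantifying all elements of the finitely many maximal finite blocks to its left and asserting (with $\Pinf{2}$ matrices, since the successor relation is $\Pinf{1}$) that these blocks are maximal and that nothing else to the left lies in a block, giving parameterless $\Sinf{3}$ orbit definitions and hence a $\Pinf{4}$ Scott sentence. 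What each approach buys: the paper's is a two-line corollary but leans on the full analysis of the Friedman--Stanley embedding; yours is self-contained and stays within linear orderings, at the cost of the hands-on complexity computation.

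Two cautions on your sketch. First, the fact that each member of $K$ has a $\Pinf{3}$ Scott sentence is stated in the proof of \cref{thm:3universal}, not in \cref{cor:pinf3ss}, which is the converse implication. Second, your ``cleanest route'' via boundary parameters is the one option that does \emph{not} work: by \cref{table:invariants}, a structure whose Scott rank drops to $3$ only after naming nontrivial parameters has Scott sentence complexity $\Sinf{5}$ or $\dSinf{4}$, and neither satisfies the hypothesis of \cref{lem:universalrankinvariant} for $4$-universality, which needs a genuine $\Pinf{4}$ Scott sentence. So you must follow your ``lighter alternative'' and verify the parameterless bound $\SR(\L_X)\le 3$ directly, which, as noted above, does hold for this family.
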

\begin{proof}
  As mentioned in the proof of \cref{thm:optimalembedding} there are uncountably
  many graphs with Scott sentence complexity $\Pinf{2}$. The Friedman-Stanley
  embedding transforms those into linear orderings of Scott sentence complexity
  $\Pinf{4}$. Thus, by \cref{lem:universalrankinvariant} there can not be a countable $4$-universal set of
  linear orderings.
\end{proof}
 \section{Possible Scott sentence complexities of linear orderings}\label{sec:ssc}
We now exhibit possible Scott sentence complexities of linear orderings. Many of
our examples are constructed by producing shuffle sums of linear orderings.
Recall that given a set $S$ of countable linear orderings, the \emph{shuffle
sum} $Sh(S)$ of $S$ is obtained by partitioning $\Q$ into $|S|$ mutually dense
sets $K_i$ and replacing each point in $K_i$ with a copy of $S_i$.
Note that a linear order of the form $L\cdot\Q$ is a special case of a shuffle
sum. We begin by proving a couple of lemmas about the back-and-forth relations
of shuffle sums.

\begin{lemma}\label{lem:shufflesum}
Let $\A$ be a shuffle sum and $\B = \L\cdot\Q$ for some ordering $\L$. If
$\A\leq_\alpha \B$ then $\A+\L+\B\leq_{\alpha+2}\A+\B$.
\end{lemma}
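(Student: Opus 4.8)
The plan is to unfold $\leq_{\alpha+2}$ with the interval characterization of \cref{lem:bfandpartitions}, match everything except one boundary interval isomorphically, and reduce the whole statement to a single local inequality which I then attack with a second back-and-forth argument that uses both $\A\leq_\alpha\B$ and the density of copies of $\L$ inside $\B=\L\cdot\Q$.

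Write $X=\A+\L+\B$ and $Y=\A+\B$. By \cref{lem:bfandpartitions} (applied to $\alpha+2>1$), to show $X\leq_{\alpha+2}Y$ I must, for every $\beta\leq\alpha+1$ and every partition of $Y$ into intervals, find a partition of $X$ with $Y_i\leq_\beta X_i$. A partition of $Y=\A+\B$ is given by finitely many cut points, each lying in the front copy of $\A$ or in $\B$; I copy each $\A$-cut to the identical point of the front $\A$ of $X$ and each $\B$-cut to the identical point of $\B$ in $X$. Every resulting interval except one is then literally the same interval of $\A$ or of $\B$ on both sides, so $Y_i\cong X_i$ and $Y_i\leq_\beta X_i$ is trivial. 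The sole exception is the interval across the boundary between the front $\A$ and $\B$, which on the $Y$-side is $\A'+\B'$ and on the $X$-side is $\A'+\L+\B'$, where $\A'$ is the final segment of $\A$ above the top $\A$-cut (all of $\A$ if there is none) and $\B'$ is the initial segment of $\B$ below the bottom $\B$-cut (all of $\B$ if there is none). Since $\leq$ is monotone in the level, the lemma reduces to the single inequality $\A'+\B'\leq_{\alpha+1}\A'+\L+\B'$ for all such $\A',\B'$.

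For this I use two structural facts. As $\A$ is a shuffle sum, $\A'$ has no greatest element and is isomorphic to a partial block followed by a full copy of $\A$; in particular the top of $\A'$ is governed by $\A\leq_\alpha\B$. And since $\B=\L\cdot\Q$, the initial segment $\B'$ is isomorphic to $\B+\L_0$ for a proper initial segment $\L_0$ of $\L$, so the bottom of $\B'$ is a dense shuffle of complete copies of $\L$. I then prove $\A'+\B'\leq_{\alpha+1}\A'+\L+\B'$ by one more pass through \cref{lem:bfandpartitions}: the $\forall$-player plays on $\A'+\L+\B'$ and the $\exists$-player answers on $\A'+\B'$. Away from the inserted $\L$ the $\exists$-player again copies cut points identically; the real work is to answer cuts meeting the inserted $\L$. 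Its lower end, sitting just above the maxless top of $\A'$, is answered using the copies of $\L$ at the bottom of $\B'$, while its upper end, sitting just below $\B'$, is answered by invoking $\A\leq_\alpha\B$ to let the top of $\A'$ imitate an initial segment of $\L$ up to level $\alpha$. Because $\L$ is flanked by a dense order without greatest element below (the top of $\A'$) and a dense shuffle of copies of $\L$ without least element above (the bottom of $\B'$), inserting it creates no new endpoint and the local order type around $\L$ is reproducible one level down.

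The main obstacle is precisely this last matching of the intervals that meet the inserted $\L$: verifying that the $\exists$-player, restricted to $\A'+\B'$, can reproduce at level $\beta-1\leq\alpha$ the configuration the $\forall$-player builds around the literal $\L$ in $\A'+\L+\B'$. This is the only place where both hypotheses are essential --- the dense copies of $\L$ in $\B=\L\cdot\Q$ absorb $\L$ from above and $\A\leq_\alpha\B$ absorbs it from below --- and it is what pins the constant at $\alpha+2$: one level is spent on the outer partition and one on simulating the two dense junctions bordering $\L$.
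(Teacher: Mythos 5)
Your opening reduction is correct, and it is in fact the same as the paper's first move: copy the $\forall$-player's cuts identically on the common $\A$ and $\B$ parts, observe that only the junction interval is non-isomorphic, and reduce the lemma to the single inequality $\A'+\B'\leq_{\alpha+1}\A'+\L+\B'$, where $\A'\cong N+\A$ (remainder of a block followed by a fresh copy of $\A$) and $\B'\cong\B+\L_0$. The gap is that this reduced inequality --- which is the entire content of the lemma --- is never proved. You describe a strategy in one sentence and then explicitly defer the verification (``the main obstacle is precisely this last matching \dots verifying that the $\exists$-player \dots can reproduce \dots''); that verification \emph{is} the proof, not a residual detail. Worse, the strategy as literally stated cannot work: you answer cuts at the \emph{lower} end of the inserted $\L$ inside copies of $\L$ at the bottom of $\B'$, and cuts at its \emph{upper} end in the top of $\A'$. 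Since every point of $\B'$ lies above every point of $\A'$ in $\A'+\B'$, this assignment is order-reversing and hence not a legal response; the two mechanisms are attached to the wrong junctions. (A smaller inaccuracy: a shuffle sum need not be dense --- $2\cdot\Q$ is not --- so what flanks the inserted $\L$ is not a ``dense order'' but an order with no greatest element whose blocks are densely arranged; the property actually needed is self-similarity, not density.)

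Here is the missing argument, which is how the paper closes. In the game for $\A'+\B'\leq_{\alpha+1}\A'+\L+\B'$, the $\exists$-player answers \emph{all} cuts lying in $\L+\B'$ at once through a single isomorphism of $\L+\B'$ onto a final segment of $\B'$: since $\B=\L\cdot\Q$, one has $\L+\B+\L_0\cong\L\cdot(1+\Q)+\L_0$, which is exactly the final segment of $\B'=\B+\L_0$ starting at the bottom of any fixed copy of $\L$; cuts in $\A'$ are copied identically. All resulting intervals are then isomorphic except the junction one, which reads $\A'_{>a}+(\L+\B')_{<c}$ on the right versus $\A'_{>a}+\B+(\L+\B')_{<c}$ on the left, where $a$ is the top cut in $\A'$ and $c$ the least cut in $\L+\B'$. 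Matching the partial block $M$ of $\A'_{>a}\cong M+\A$ and the two copies of $(\L+\B')_{<c}$ isomorphically, it remains to prove $\A\leq_\beta\A+\B$ for all $\beta\leq\alpha$, and this is where both hypotheses are finally consumed: $\A\cong\A+\A$ because $\A$ is a shuffle sum, and $\A+\A\leq_\beta\A+\B$ follows from $\A\leq_\alpha\B$ together with additivity of the back-and-forth relations over sums of intervals. This chain --- the absorption isomorphism, the identification of the residual interval, and the closing step through $\A\cong\A+\A$ --- is what your proposal gestures at but does not supply, so it stops one full move short of a proof.
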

\begin{proof}
To show this, we describe a winning strategy for the $\exists$-player in the
$\alpha+2$-back-and-forth game. 
On the first turn, the $\exists$-player copies the selected tuple of the
$\forall$-player using an isomorphism of the initial $\A$s and the final $\B$s. 
The only possible non-isomorphic interval is the one between the smallest
element $b$ in $\B$ and the largest element $a$ in $\A$. 
In other words, we need to show that 
\[\A_{>a}+\L+\B_{<b}\geq_{\alpha+1} \A_{>a}+\B_{<b}.\]
Because $\A$ and $\B$ are shuffle sums, $\A_{>a}\cong N+\A$ and $\B_{<b}\cong
\B+K$ for some linear orderings $N$ and $K$. 
This means that we only need to show that
$\A+\L+\B\geq_{\alpha+1}\A+\B$
if we play isomorphically on $N$ and $K$.

On the next turn, the $\forall$-player plays on $\A+\L+\B$, and the
$\exists$-player responds on $\A+\B$ using an isomorphism between the initial
$\A$s and an isomorphism of $\L+\B$ to a final segment of $\B$.
Assume without loss of generality, that $c$ is the smallest element in $\L$
played by the $\forall$-player in the distinguished copy of $\L$ within $\A+\L+\B$.
The only possible non-isomorphic interval is between $c$ and the
largest element $a$ selected within $\A$.
In other words, we need to show that 
\[\A_{>a}+\L_{<c}\leq_{\alpha} \A_{>a}+\B+\L_{<c}.\]
As $\A_{>a}\cong M+\A$ and the $\exists$-player can play the isomorphism between
the copies of $\L_{<c}$ and $M$, it is sufficient to show that $\A\leq_{\alpha}\A+\B$.
As $\A$ is a shuffle sum $\A\cong \A+\A$.
Therefore, it is enough to show that $\A\leq_\alpha \B$, as desired.
\end{proof}

\begin{corollary}\label{lower}
Let $\A$ be a shuffle sum and $\B = \L\cdot\Q$ for some ordering $\L$. If $\A+\L+\B\not\cong \A+\B$ and
$\A\leq_\alpha \B$, then $\A+\L+\B$ has no $\Pinf{\alpha+2}$ Scott sentence.
\end{corollary}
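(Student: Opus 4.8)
The plan is to deduce this immediately from \cref{lem:shufflesum} together with the standard back-and-forth characterization of $\Pinf{\beta}$ Scott sentences. Recall that a countable structure $\mathcal{M}$ has a $\Pinf{\beta}$ Scott sentence if and only if for every countable structure $\mathcal{N}$, $\mathcal{M}\leq_\beta\mathcal{N}$ implies $\mathcal{N}\cong\mathcal{M}$ (see \cite{montalban2021}). Thus, to show that $\A+\L+\B$ has no $\Pinf{\alpha+2}$ Scott sentence it suffices to exhibit a single structure $\mathcal{N}$ with $\A+\L+\B\leq_{\alpha+2}\mathcal{N}$ but $\mathcal{N}\not\cong\A+\L+\B$.

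The natural candidate for this witness is $\mathcal{N}=\A+\B$. First I would apply \cref{lem:shufflesum}: since $\A$ is a shuffle sum, $\B=\L\cdot\Q$, and $\A\leq_\alpha\B$ by hypothesis, the lemma yields $\A+\L+\B\leq_{\alpha+2}\A+\B$. Second, the remaining hypothesis $\A+\L+\B\not\cong\A+\B$ says precisely that this witness is non-isomorphic to $\A+\L+\B$. Combining these two facts with the characterization recalled above shows that $\A+\L+\B$ cannot have a $\Pinf{\alpha+2}$ Scott sentence, since such a sentence would force every structure lying $\leq_{\alpha+2}$-above $\A+\L+\B$ to be isomorphic to it.

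Since the substantive work---the back-and-forth analysis establishing $\A+\L+\B\leq_{\alpha+2}\A+\B$---has already been carried out in \cref{lem:shufflesum}, no real obstacle remains. The only point demanding care is to invoke the Scott-sentence characterization in the correct direction: it is the inequality $\A+\L+\B\leq_{\alpha+2}\A+\B$ (and not its reverse) together with non-isomorphism that rules out a $\Pinf{\alpha+2}$ Scott sentence. One should also note that the hypotheses of \cref{lem:shufflesum} are exactly those assumed here, so the application is immediate and requires no additional case analysis.
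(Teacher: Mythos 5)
Your proof is correct and is exactly the argument the paper intends: the corollary is stated without proof as an immediate consequence of \cref{lem:shufflesum}, combined with the standard characterization that a countable structure has a $\Pinf{\beta}$ Scott sentence if and only if every countable structure lying $\leq_\beta$-above it is isomorphic to it. You also apply the characterization in the correct direction, taking $\A+\B$ as the non-isomorphic witness above $\A+\L+\B$ in the $\leq_{\alpha+2}$ ordering.
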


We are now ready to provide the constructions of example linear orders.

\begin{theorem}
There is a linear ordering with Scott sentence complexity $\Sinf{4}$.
\end{theorem}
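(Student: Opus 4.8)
The strategy is to exhibit a concrete linear ordering that has a $\Sinf{4}$ Scott sentence while ruling out the three simpler complexities that could sit below it, namely $\Pinf{3}$, $\dSinf{3}$, and $\Sinf{3}$. By \cref{thm:nosigma3} no linear ordering has complexity $\Sinf{3}$, so that case is free. The candidate I would use is an ordering of the form $\A+\L+\B$ where $\A$ is a shuffle sum and $\B=\L\cdot\Q$, engineered so that \cref{lower} applies to kill the $\Pinf{3}$ (and hence also the $\dSinf{3}$) possibility, while the overall parameterized Scott rank remains low enough to guarantee an honest upper bound of $\Sinf{4}$. A natural choice is to take $\L$ to be a single point (or some fixed finite chain) and $\A$ the shuffle sum realizing the relevant finite pieces, so that $\B=\L\cdot\Q=\Q$ essentially, and arrange that $\A\leq_2\B$ but $\A+\L+\B\not\cong\A+\B$.

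First I would pin down the concrete ordering. Using \cref{lem:universal}, any infinite linear ordering satisfies $\A\leq_2\Q$, so with $\B=\Q$ we automatically get $\A\leq_2\B$; taking $\L=1$, \cref{lower} with $\alpha=2$ then gives that $\A+1+\Q$ has no $\Pinf{4}$ Scott sentence, provided $\A+1+\Q\not\cong\A+\Q$. To realize this non-isomorphism I would choose $\A$ so that inserting an extra point just before the copy of $\Q$ creates a genuinely new adjacency pattern — for instance by letting $\A$ end without an adjacency at its supremum of the $\A$-part, so that the added point $1$ produces a last element of the well-ordered-looking part that is not present in $\A+\Q$. The precise $\A$ must be checked to be a shuffle sum and to satisfy the non-isomorphism, which is the combinatorial heart of the construction.

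Next I would establish the upper bound $\Sinf{4}$. By \cref{table:invariants}, it suffices to exhibit a parameter $\bp$ such that $(\A+\L+\B,\bp)$ has a $\Pinf{3}$ Scott sentence, with the orbit of $\bp$ being $\Pinf{2}$-definable. The parameter will name the endpoints of the distinguished finite block $\L$; once this block is marked, each induced interval should be a finite piece, a copy of $\Q$, or a shuffle sum whose $\Pinf{3}$ Scott sentence is standard (indeed \cref{cor:pinf3ss} and the analysis in \cref{thm:3universal} supply $\Pinf{3}$ Scott sentences for the relevant intervals). Combining this $\Pinf{3}$ upper bound after parameters with the $\Pinf{2}$-definability of the parameter's orbit yields $SSC(\A+\L+\B)\leq\Sinf{4}$; the lower bound from \cref{lower} rules out $\Pinf{4}$ and hence all of $\Pinf{3}$, $\dSinf{3}$, and (by \cref{thm:nosigma3}) $\Sinf{3}$, forcing the complexity to be exactly $\Sinf{4}$.

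The main obstacle I anticipate is verifying the non-isomorphism $\A+\L+\B\not\cong\A+\B$ simultaneously with the two back-and-forth facts $\A\leq_2\B$ and the sharp $\Pinf{3}$-after-parameter upper bound. These pull in opposite directions: making $\A$ rich enough to force the non-isomorphism risks inflating the complexity of the parameterized intervals above $\Pinf{3}$, while making $\A$ simple enough for a clean $\Pinf{3}$ Scott sentence risks collapsing $\A+\L+\B\cong\A+\B$. The delicate step is therefore choosing $\A$ and $\L$ so that the \emph{single} extra adjacency introduced by $\L$ is detectable (guaranteeing non-isomorphism and invoking \cref{lower}) but leaves every induced interval among the $\Pinf{3}$-describable orderings classified in \cref{thm:3universal}.
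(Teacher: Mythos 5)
Your scaffolding is exactly the paper's: its witness is $2\cdot\Q+1+\Q$, i.e.\ your $\A+\L+\B$ with $\A=2\cdot\Q$, $\L=1$, $\B=\Q$, with the lower bound obtained from \cref{lower} together with \cref{lem:universal}, and the upper bound from a parameter at the middle point. But the theorem is an existence statement, and your proposal never exhibits the ordering: the choice of $\A$ is left as a desideratum, flagged by you as ``the combinatorial heart,'' and your heuristic for it (``a genuinely new adjacency pattern,'' ``letting $\A$ end without an adjacency at its supremum'') neither determines a particular order nor gets verified against your own constraints. Those constraints in fact pin $\A$ down almost uniquely: for the parameterized $\Pinf{3}$ upper bound you need $\SR(\A)\leq 2$, i.e.\ $\A$ has a $\Pinf{3}$ Scott sentence, so by \cref{cor:pinf3ss} $\A$ lies in the class $K$ of \cref{thm:3universal}; the only shuffle sums in $K$ are the orderings $m\cdot\Q$, and $m=1$ fails your non-isomorphism requirement since $\Q+1+\Q\cong\Q\cong\Q+\Q$. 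So the whole construction lives or dies on choosing some $m\geq 2$, a step the proposal does not take.

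The gap closes immediately with the paper's choice $\A=2\cdot\Q$, and the ``tension'' you anticipate dissolves: (i) $2\cdot\Q\leq_2\Q$ by \cref{lem:universal}; (ii) $2\cdot\Q+1+\Q\not\cong 2\cdot\Q+\Q$, because the inserted point is a singleton block with only $2$-blocks below it, whereas every singleton block of $2\cdot\Q+\Q$ has singleton blocks below it (note the distinguishing feature is the block pattern, not a new adjacency --- the inserted point is adjacent to nothing); and (iii) $\SR(2\cdot\Q+1+\Q,c)\leq 2$, since in $2\cdot\Q$ the orbits of singletons are defined by the $\Sinf{2}$ formulas $\exists y\, S(x)=y$ and $\exists y\, S(y)=x$, orbits in $\Q$ are trivially definable, and naming the cut $c$ reduces the parameterized Scott rank to the maximum over the two intervals. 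Two smaller inaccuracies: the parameter-orbit complexity you impose ($\Pinf{2}$-definable) is neither what \cref{table:invariants} lists for $\Sinf{4}$ (it is $\Pinf{3}$) nor needed for the upper bound, since existentially quantifying the parameter in the $\Pinf{3}$ Scott sentence of the parameterized structure already yields a $\Sinf{4}$ Scott sentence; and the appeal to \cref{thm:nosigma3} is redundant, because excluding a $\Pinf{4}$ Scott sentence already excludes a $\Sinf{3}$ one.
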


\begin{proof}
Consider the linear order $\L=2\cdot\Q+1+\Q$ and call the $1$ in the middle $c$.
We claim that that $\SR(\L,c)=\max\{\SR(2\cdot\Q),\SR(\Q)\}\leq2$ (the first equality
follows from~\cite[Lemma 11]{GM23}). The inequality follows from the fact that
in $2\cdot\Q$ the orbits of singletons are defined by the $\Sinf{2}$-formulas $\exists y\
S(x)=y$ and $\exists y\ S(y)=x$. This definition extends to tuples by adding in
the order of the tuple to the definition along with any successor relations that hold.

We now show that $\L$ has Scott sentence complexity $\Sinf4$.
This will be done by showing that it does not have a $\Pinf4$ Scott sentence. 
In order to do this, we appeal to \cref{lower} which implies that all we need to show is that $2\cdot\Q\leq_2\Q$.
However, this follows immediately from the 2-universality of $\Q$ among infinite
linear orders (\cref{lem:universal}).
\end{proof}

Note that the above proof also implies that $2\cdot\Q+1+\Q\leq_42\cdot\Q+\Q$.

\begin{theorem}
There is a linear order with Scott sentence complexity $\Sigma_5^{in}$.
\end{theorem}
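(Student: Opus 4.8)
The plan is to mimic the construction that produced Scott sentence complexity $\Sinf{4}$, but one level higher. For the $\Sinf{4}$ example the idea was to take a structure whose parameterized Scott rank is $2$ (witnessed by the block $2\cdot\Q$ whose singleton orbits are $\Sinf{2}$-definable), glue it to $\Q$ with a distinguished point $c$, and then use \cref{lower} to kill the $\Pinf{4}$ Scott sentence via the inequality $2\cdot\Q\leq_2\Q$. To climb to $\Sinf{5}$ I would replace the ``$2\cdot\Q$'' factor by a shuffle sum whose singleton orbits are genuinely $\Sinf{3}$ but not $\Sinf{2}$, so that its parameterized Scott rank becomes $3$; a natural candidate is $\Z\cdot\Q$ (the shuffle sum built from a single copy of $\Z$), since the orbit of a point in a $\Z$-block is defined by ``$x$ lies in an interval order-isomorphic to $\Z$,'' a properly $\Sinf{3}$ condition. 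Concretely I expect to take something like $\L=\Z\cdot\Q+1+\Q$, with the isolated $1$ named $c$.

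\medskip

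\textbf{The upper bound.} First I would establish $SR(\L,c)=3$, so that by \cref{table:invariants} the ambient structure $\L$ has a $\Sinf{5}$ Scott sentence, which makes $\Sinf{5}$ the only remaining candidate once the $\Pinf{5}$ sentence is ruled out. To do this I would compute the Scott rank of each parameter-induced interval separately using the interval-decomposition remark after \cref{lem:bfandpartitions}: the interval to the left of $c$ is $\Z\cdot\Q$ and the one to the right is $\Q$, and I would argue $SR(\Z\cdot\Q)=3$ by exhibiting $\Sinf{3}$ definitions of the orbits of tuples (each singleton's orbit says ``$x$ sits in a $\Z$-block,'' and tuples add the finitely many successor/order relations among the named points, exactly as in the $\Sinf{4}$ proof), together with the observation that these orbits are not $\Sinf{2}$. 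I would cite the analogue of \cite[Lemma 11]{GM23} used before to reduce the rank of $(\L,c)$ to the max of the ranks of the two sides.

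\medskip

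\textbf{The lower bound.} To show $\L$ has \emph{no} $\Pinf{5}$ Scott sentence I would again invoke \cref{lower} with $\A$ the shuffle sum $\Z\cdot\Q$ and $\B=\Q$ (which is of the form $\L'\cdot\Q$ with $\L'=1$). The corollary requires two inputs: that $\A+\L'+\B\not\cong\A+\B$ (i.e. inserting the extra point really changes the isomorphism type — clear, since it creates a point $c$ whose left neighborhood is $\Z\cdot\Q$ and whose right neighborhood is $\Q$, a configuration absent from $\A+\B=\Z\cdot\Q+\Q$), and the key back-and-forth inequality $\A\leq_3\B$, that is $\Z\cdot\Q\leq_3\Q$. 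Granting these, \cref{lower} yields that $\Z\cdot\Q+1+\Q$ has no $\Pinf{3+2}=\Pinf{5}$ Scott sentence, and combined with the $\Sinf{5}$ upper bound this pins the complexity at exactly $\Sinf{5}$.

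\medskip

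\textbf{The main obstacle} I anticipate is verifying the inequality $\Z\cdot\Q\leq_3\Q$ at the correct level. Unlike the $\Sinf{4}$ case, where $2\cdot\Q\leq_2\Q$ fell out immediately from the $2$-universality of $\Q$ (\cref{lem:universal}), here I need one more level of back-and-forth. I would prove it directly via \cref{lem:bfandpartitions}: given a partition of $\Q$ into finitely many intervals I must produce a partition of $\Z\cdot\Q$ whose corresponding intervals are $\geq_2$ the ones in $\Q$; since every interval of $\Q$ is infinite and every interval of $\Z\cdot\Q$ is also an infinite shuffle-type ordering, the required $\leq_2$ comparisons reduce to the $2$-universality of $\Q$ applied interval-by-interval, giving $\Z\cdot\Q\leq_3\Q$. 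Care is needed to confirm that the orbits are genuinely not $\Sinf{2}$ (so the rank is exactly $3$ and not lower), which I would do by noting that distinguishing a $\Z$-block point from a generic dense point requires a $\forall\exists$ alternation and cannot be done by a single existential block-description.
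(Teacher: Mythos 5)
There is a genuine gap here --- two, in fact, and each one by itself is fatal to the construction $\L=\Z\cdot\Q+1+\Q$.

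First, your key inequality $\Z\cdot\Q\leq_3\Q$ is false. Semantically, $\Z\cdot\Q\leq_3\Q$ would require every $\Sinf{3}$ sentence true in $\Q$ to hold in $\Z\cdot\Q$; density is a $\Pinf{2}$ (hence $\Sinf{3}$) sentence true in $\Q$ and false in $\Z\cdot\Q$. Concretely, in the partition game of \cref{lem:bfandpartitions}: the $\forall$-player picks two distinct points $q_1<q_2$ of $\Q$, and you must answer with $p_1\leq p_2$ in $\Z\cdot\Q$ such that $(q_1,q_2)\leq_2(p_1,p_2)$. Every point of $\Z\cdot\Q$ lies inside a $\Z$-block, so $(p_1,p_2)$ is either empty or has a least element; in the latter case the $\forall$-player plays that least element, leaving the empty interval below it, while any response inside $(q_1,q_2)\cong\Q$ leaves an infinite interval below it, and $\emptyset\leq_1(\text{infinite interval})$ fails. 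This is exactly where your ``obstacle'' paragraph goes wrong: the $2$-universality of $\Q$ (\cref{lem:universal}) gives $I\leq_2\Q$ for infinite $I$, but what your argument needs is the reverse inequality $\Q_i\leq_2 P_i$ for intervals $P_i$ of $\Z\cdot\Q$, and that direction is false precisely because such intervals have endpoints.

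Second, the upper-bound half miscomputes the rank: $\Z\cdot\Q$ is transitive (every point sits in a $\Z$-block, so all singletons are automorphic), hence singleton orbits are trivially definable, and tuple orbits are conjunctions of conditions ``distance exactly $n$'' (which is $\dSinf{1}$) and ``infinitely far apart'' (which is $\Pinf{1}$). So $\SR(\Z\cdot\Q)=2$, not $3$, and therefore $\SR(\L,c)=2$, which by \cref{table:invariants} already yields a $\Sinf{4}$ Scott sentence for $\L$; your example cannot have Scott sentence complexity $\Sinf{5}$. (In fact, since $\Z\cdot\Q\leq_2\Q$ does hold, \cref{lower} with $\alpha=2$ rules out a $\Pinf{4}$ Scott sentence, so $\L$ has complexity exactly $\Sinf{4}$ --- it is just another example one level too low.) The paper's fix is to take $\A=Sh(\{1,\omega\})$, middle piece $\omega$, and $\B=\omega\cdot\Q$: there the singleton orbits genuinely require $\Sinf{3}$ formulas (e.g.\ ``$x$ has no successor,'' with the successor relation itself $\Pinf{1}$-definable), and the needed inequality $Sh(\{1,\omega\})\leq_3\omega\cdot\Q$ is provable because both sides realize the same interval configurations at level $3$. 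The moral is that the $\leq_3$-target cannot be $\Q$: density of $\Q$ is visible at level $3$ to any non-dense ordering, so both the shuffle sum and the ordering it is compared against must be chosen with matching level-$3$ behavior.
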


\begin{proof}

Let $\A=Sh(\{1,\omega\})$, $\B=\omega\cdot\Q$, $\L=\A+\omega+\B$ and denote the
first element of the $\omega$ in the middle $c$. We claim that $\L$ has Scott
sentence complexity $\Sinf{5}$. 

First, we show that $\SR(\L,c)=\max\{\SR(\A),\SR(\B)\}\leq3$ (the first equality
follows from~\cite[Lemma 11]{GM23}). 
Define the following family of formulas that denote if $x$ is the top of an
successor chain of length $n>1$:
\[S_n(x):\hspace{.5 in} \exists x_1,\cdots,x_{n-1} \bigwedge_{i<n-1}
S(x_i,x_{i+1}) \land S(x_{n-1},x).\]
Note that $S_n(x)$ is $\Sinf{2}$. It is not difficult to see that
the automorphism orbits of elements in $\B$ are given by the formulas
\[\forall y \neg S(y,x) \text{ and } S_n(x)\land\lnot S_{n+1}(x) \text{ for }
n\in\omega.\]
These formulas are all $\Sinf 3$ and the automorphism orbits of tuples can be
easily constructed by taking conjunctions over them. Thus, $\SR(\B)\leq 3$.
Similarly, within $\A$ the automorphism orbits of elements are given by the
formulas
\[\forall y\  \neg S(x,y),\ \forall y\ \neg S(y,x) \land \exists y\ S(x,y)\
\text{and} \ S_n(x)\land\lnot S_{n+1}(x) \text{ for all $n\in\omega$}.\]
To extend these definitions to tuples we only need to add the successor relations that hold between the elements and the order of the elements.
All of these definitions can be given by $\Sigma_3^{in}$ and therefore, $\SR(\A)\leq 3$.

It remains to show that $\L$ does not have a $\Pinf{5}$ Scott sentence. 
By \cref{lower} it is enough to show that $\omega\cdot\Q\geq_3\A$.
Towards this let $\bar{p}\in \omega\cdot\Q$ be an ordered tuple. We want to find
$\bq\in \A$ such that $(\omega\cdot \Q,\bp)\leq_2(\A,\bq)$. We may assume
without loss of generality that $\bp$ is ordered and contains no elements from
the same block. Now find an
ordered tuple $\bar{q}\in \A$ such that every $q_i$ is in an $\omega$-block and
in the same position within that block as $p_i$. Note that, for each
$i$ there is some $k_i\in\omega$, such that $(p_i,p_{i+1})\cong \omega+\A+k_i$
and $(q_i,q_{i+1})\cong \omega+\omega\cdot\Q+k_i$. Therefore, it is enough to show that
\[\A\geq_2 \omega\cdot\Q.\]

Repeating the style of argument from above we fix a tuple $\bar{p}\in \A$ and
find a suitable $\bar{q}\in \omega\cdot\Q$ such that
$(\A,\bar{p})\leq_1(\omega\cdot\Q,\bar{q}).$ Recall that $\A\leq_1 \B$ if and
only if $|\B|\leq |\A|$. Assuming that $\bar{p}$ is ordered, $(p_i,p_{i+1})$ has
cardinality $n\in\omega$ or $\aleph_0$ with the first and last intervals always
having cardinality $\aleph_0$. Any combination of gaps of size $n$ and
$\aleph_0$ can also be found in $\omega\cdot\Q$ and we can thus find a suitable
tuple $\bar{q}$ to finish the proof.
\end{proof}
Given these base case examples of linear orders of small Scott sentence complexity, we now devise a method of systematically using these examples to fill in most of the larger Scott sentence complexities. 
The examples are quite simple; we consider orders of the form $\Z^\alpha\cdot
\L$ where $\L$ is one of our previously constructed examples. 
The more difficult work is determining exactly of what Scott sentence complexity these orders are. 
 
This will be proven by demonstrating an upper and lower bound on the complexity of the Scott sentence. 
We begin with proving the lower bound. 
In the proof of the below lemma we use $\zeta_\alpha$ to denote the unique up to
isomorphism initial segment of $\Z^\alpha$. 
It is easy to check that $\zeta_\alpha^*$ is the unique end segment of  $\Z^\alpha$ and that $\zeta_{\alpha+1}^*=\zeta_\alpha+\Z^\alpha\cdot\omega$. 

\begin{lemma}\label{lem:timesZ}
For the sake of organization, consider the following ordinal indexed propositions.
\begin{enumerate}\tightlist
\item[($A_\alpha$)] For any $\K$ and $\L$, $\Z^\alpha\cdot \K
    \leq_{2\alpha}\Z^\alpha\cdot \L$.
\item[($B_\alpha$)] For any $\K$ and $\L$ with $|\K|\geq|L|$, $\Z^\alpha\cdot \K
    \leq_{2\alpha+1}\Z^\alpha\cdot \L$.
\item[($C_\alpha$)] For any $\K$ and any $\L$ without a last element,
    $\Z^\alpha\cdot(\omega+\K)\leq_{2\alpha+2} \Z^\alpha\cdot(\omega+\L)$.
\end{enumerate}
For any countable ordinal $\alpha$, $A_\alpha$, $B_\alpha$ and $C_\alpha$ are true.
\end{lemma}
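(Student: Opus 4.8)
The plan is to prove $A_\alpha$, $B_\alpha$ and $C_\alpha$ by a single simultaneous transfinite induction on $\alpha$, using as the only genuine tool the interval characterization of $\le_\gamma$ from \cref{lem:bfandpartitions}, together with the interval-wise criterion recorded just after it (which compares a sum $P_1+\dots+P_m$ with $Q_1+\dots+Q_m$ blockwise) and the fact that reversing an ordering preserves every $\le_\gamma$. The three statements are calibrated so that each furnishes exactly the per-interval comparisons the others need one rank higher: when \cref{lem:bfandpartitions} is applied to a statement of rank $2\alpha+k$, every block of the witnessing partition must be matched only at some $\beta<2\alpha+k$, and those matches are supplied by the statements of rank $2\alpha+k-1$. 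The two extra hypotheses are precisely what purchase the extra levels: the size condition $|\K|\geq|\L|$ in $B$ guarantees the source has enough copies of $\Z^\alpha$ to absorb the target partition (one level over $A$), and the leading $\omega$-block with the ``no last element'' clause in $C$ keeps the extremal block infinite (a second level).

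The base case $\alpha=0$ is immediate once one recalls $\Z^{0}=1$: $A_0$ is the trivial relation $\le_0$; $B_0$ is $\K\le_1\L$, which holds by the $\le_1$-clause of \cref{lem:bfandpartitions} since $|\K|\geq|\L|$; and $C_0$ asks for $\omega+\K\le_2\omega+\L$, which follows from the partition lemma by copying the initial $\omega$ isomorphically, making the finite middle gaps of $\omega+\K$ no larger than those of $\omega+\L$, and using that the final block of $\omega+\L$ is infinite because $\L$ has no last element. For $A_\lambda$ at a limit $\lambda$ I would instead use the self-similarity $\Z^{\lambda}\cong\Z^{\gamma}\cdot M_\gamma$, valid for every $\gamma<\lambda$ with $M_\gamma$ the order of finitely supported functions on $[\gamma,\lambda)$: rewriting $\Z^{\lambda}\cdot\K=\Z^{\gamma}\cdot(M_\gamma\cdot\K)$ and applying $A_\gamma$ gives $\Z^{\lambda}\cdot\K\le_{2\gamma}\Z^{\lambda}\cdot\L$, and since the $2\gamma$ are cofinal in $2\lambda$ and $\le_{2\lambda}=\bigcap_{\delta<2\lambda}\le_\delta$, monotonicity upgrades this to $\le_{2\lambda}$.

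Everything else — all of $B$ and $C$, and the successor instances of $A$ — lands at a successor rank, and I would treat these uniformly by viewing $\Z^{\alpha+1}\cdot\K$ as copies of $\Z^{\alpha}$ indexed by $\Z\cdot\K$ (and $\Z^{\lambda}$ via the decomposition above) and applying the partition lemma to peel off the outer factor. Fix $\beta$ below the target rank and a partition of the larger ordering; I build the matching partition of the source copy-block by copy-block. A block lying inside a single copy of $\Z^{\alpha}$ is an interval of $\Z^{\alpha}$ and is matched by cutting one source copy at the corresponding relative positions, making the two blocks isomorphic. A block spanning several copies decomposes as $\zeta_\alpha^{*}+\Z^{\alpha}\cdot I+\zeta_\alpha$ with $I$ an interval of the outer index order, the caps being end- and initial-segments of $\Z^\alpha$ that I match isomorphically on both sides. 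A one-sided (leftmost or rightmost) such block has, after possibly reversing, the shape $\zeta_\alpha^{*}+\Z^{\alpha}\cdot(\omega+X)$ with $X$ automatically having no last element, and is matched by $C_\alpha$; a two-sided block I split at the point where its index passes from the ascending $\omega$ to the descending $\omega^{*}$, writing it as $\bigl(\zeta_\alpha^{*}+\Z^{\alpha}\cdot(\omega+M)\bigr)+\bigl(\Z^{\alpha}\cdot\omega^{*}+\zeta_\alpha\bigr)$, and matching the first summand by $C_\alpha$ and the reverse of the second by $C_\alpha$ again, gluing with the interval-wise criterion. When only rank $2\alpha+1$ or $2\alpha$ is demanded, the same spanning blocks are matched more cheaply by $B_\alpha$ (its size hypothesis being free, as spanning index intervals are infinite) or by $A_\alpha$. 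The global count needed to realize all source cuts at once is exactly where $|\K|\geq|\L|$ is consumed in the $B$-cases.

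The main obstacle is the boundary bookkeeping I have compressed above: one must verify that stripping the caps $\zeta_\alpha,\zeta_\alpha^{*}$, reversing, and splitting at the $\omega/\omega^{*}$ junction always leave index intervals of exactly the form hypothesized by $A_\alpha$, $B_\alpha$ or $C_\alpha$ — in particular that ``no last element'' is regenerated at each step (which is where the identity $\zeta_{\alpha+1}^{*}=\zeta_\alpha+\Z^{\alpha}\cdot\omega$ does its work) and that the size condition descends to the relevant sub-index-intervals. Checking that this accounting is tight, so that $B$ really gains one level and $C$ two rather than collapsing back onto $A$, is the delicate heart of the argument; once the blockwise matches are in place, the conclusion is a direct application of \cref{lem:bfandpartitions} and monotonicity.
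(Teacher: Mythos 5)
Your base case, your cofinality argument for $A_\lambda$, and your successor-level machinery are essentially sound and essentially the paper's: peeling the index order with the partition lemma, matching the caps $\zeta_\alpha,\zeta^*_\alpha$ isomorphically, and supplying the blockwise relations one rank down is exactly how the paper proves $B_\alpha\Rightarrow A_{\alpha+1}$ and $B_\alpha\Rightarrow C_\alpha$; your derivation of $B_{\alpha+1}$ from $C_\alpha$ alone (splitting two-sided spanning blocks at the $\omega/\omega^*$ junction, so that no degenerate end blocks appear because the index $\Z\cdot\K$ has no endpoints) is a clean variant of the paper's $C_\alpha\wedge A_{\alpha+1}\Rightarrow B_{\alpha+1}$. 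The genuine gap is at limit ordinals $\lambda$, for $B_\lambda$ (and hence for $C_\lambda$, whose blocks require $B_\lambda$). Your uniform recipe applied to $B_\lambda$ (rank $2\lambda+1=\lambda+1$) must produce blockwise matches at rank exactly $2\lambda=\lambda$. If you peel down to $\Z^\gamma$-copies via $\Z^\lambda\cong\Z^\gamma\cdot M_\gamma$, the needed matches $\Z^\gamma\cdot I\leq_\lambda\Z^\gamma\cdot I'$ sit at rank $\lambda>2\gamma+2$, far above anything $A_\gamma,B_\gamma,C_\gamma$ supply, and they are false in general (already $\Z^\gamma\not\leq_{2\gamma+1}\Z^\gamma\cdot 2$). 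If instead you peel the outermost index $\K$ itself, so that blocks are $\Z^\lambda$-multiples, then, unlike the successor case, degenerate end blocks are unavoidable: take $\K=\Z$ and $\L=\omega^*$ (so $|\K|\geq|\L|$), let the $\forall$-player choose a point in the last copy of $\Z^\lambda\cdot\L$; the target block above it is the bare cap $\zeta^*_\lambda$, the source block is $\zeta^*_\lambda+\Z^\lambda\cdot\K_1$, and one must show $\zeta^*_\lambda\leq_\lambda\zeta^*_\lambda+\Z^\lambda\cdot\K_1$. This is not an instance of $A_\lambda$, and your one-sided-block rule would match it ``by $C_\lambda$'' --- but in your dependency order $C_\lambda$ is derived from $B_\lambda$, so that is circular.

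This is exactly the point the paper flags as ``a bit more nuanced'': there, $B_\lambda$ is deduced from $A_\lambda$ together with \emph{all} the $C_\gamma$ for $\gamma<\lambda$. The repair is the same cofinality idea you already use for $A_\lambda$, applied to the degenerate block: since $\lambda=\sup_{\gamma<\lambda}(2\gamma+2)$, it suffices to prove $\zeta^*_\lambda\leq_{2\gamma+2}\zeta^*_\lambda+\Z^\lambda\cdot\K_1$ for each $\gamma<\lambda$, and this \emph{is} an instance of $C_\gamma$ after writing $\zeta^*_\lambda\cong\zeta^*_\gamma+\Z^\gamma\cdot(\omega+X)$ and $\zeta^*_\lambda+\Z^\lambda\cdot\K_1\cong\zeta^*_\gamma+\Z^\gamma\cdot(\omega+X+M_\gamma\cdot\K_1)$, where $X$ and $X+M_\gamma\cdot\K_1$ have no last element. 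A second, smaller bookkeeping point: for $C_{\alpha+1}$ your stated peeling (to $\Z^\alpha$-copies indexed by $\Z\cdot(\omega+\K)$) leaves blocks needing rank $2\alpha+3$, which none of $A_\alpha,B_\alpha,C_\alpha$ provide; you must instead peel the outermost factor $\omega+\K$ so that the blocks are $B_{\alpha+1}$-instances. With these two corrections the dependency order $B_\alpha\to A_{\alpha+1}$, $C_\alpha\to B_{\alpha+1}\to C_{\alpha+1}$, plus the dedicated limit argument for $B_\lambda$, is well founded and your proof goes through.
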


\begin{proof}
    The proof is by induction with the statements $A_0$ and $B_0$ trivially true.
    We start by showing that $B_\alpha$ implies $A_{\alpha+1}$. 

    Select a tuple $a_0\cdots a_n$ in $\Z^{\alpha+1}\cdot \L=\Z^\alpha\cdot\Z\cdot
    \L$. We will find a tuple $b_0\dots b_n$ in $\Z^{\alpha+1}\cdot
    \K=\Z^\alpha\cdot\Z\cdot \K$ such that $(\Z^{\alpha+1}\cdot \L,\ba
    )\leq_{2\alpha+1} (\Z^{\alpha+1}\cdot \K,\bb)$. Pick a point $b_0$ arbitrarily.
    Given $b_j$, we pick $b_{j+1}$ as follows. If $a_j$ and $a_{j+1}$ are in the
    same copy of $\Z^{\alpha+1}$, we can pick $b_{j+1}$ such that
    $(a_j,a_{j+1})\cong(b_j,b_{j+1})$. If not, we pick $b_{j+1}$
    arbitrarily in the copy of $\Z^{\alpha}$ that is the successor of the successor
    of the copy of $b_j$. In other words, we pick $b_{j+1}$ with  $(b_j,b_{j+1})\cong \zeta^*_\alpha +\Z^\alpha+\zeta_\alpha$.
    
To show that our choice of $\bb$ is correct, we have to confirm the following
relations between non-isomorphic intervals.
\begin{enumerate}\tightlist
\item $(b_j,b_{j+1})\cong \zeta^*_\alpha +\Z^\alpha+\zeta_\alpha \geq_{2\alpha+1} \zeta^*_\alpha
    +\Z^\alpha\cdot \L_1+\zeta_\alpha$, where $\L_1$ is an infinite order,
\item $(-\infty,b_0)\cong \Z^\alpha\cdot \K_1+\zeta_\alpha \geq_{2\alpha+1}\Z^\alpha\cdot
    \L_1+\zeta_\alpha$, where $\L_1$ and $\K_1$ are some initial segments of the $\Z^\alpha$, and therefore infinite, or
\item $(b_{n},\infty)\cong\zeta^*_\alpha +\Z^\alpha\cdot \L_1 \geq_{2\alpha+1} \zeta^*_\alpha
    +\Z^\alpha\cdot \K_1$, where $\L_1$ and $\K_1$ are some final segments of the $\Z^\alpha$, and therefore infinite.
\end{enumerate} 
It follows immediately from $B_\alpha$ that all of these back-and-forth relations hold.

We now show that $B_\alpha$ implies $C_\alpha$. So assume that $\K$ and $\L$ have
no last element and that $B_\alpha$ holds. 
Select a tuple $a_0\cdots  a_n$ in $\Z^\alpha\cdot(\omega+\L)$.
Without loss of generality, $a_0$ is in the first copy of $\Z^\alpha$. We will
find a tuple $b_0\dots b_n$ in $\Z^\alpha\cdot(\omega+\K)$ such that
$(\Z^\alpha\cdot (\omega+\L),\ba)\leq_{2\alpha+1}
(\Z^\alpha\cdot(\omega+\K),\bb)$. For the $a_j$ in the initial $\Z^\alpha\cdot\omega$, the $b_j$ are picked isomorphically. 
We now define the rest of the $b_j$ by induction so that all of them are in the initial $\Z^\alpha\cdot\omega$.
If $a_j$ is in the same $\Z^\alpha$ block as $a_{j+1}$ or $a_{j+1}$ is in the
succesor of $a_j$'s block, define $b_{j+1}$ so that
$(b_j,b_{j+1})\cong (a_j,a_{j+1})$.
Otherwise, define $b_{j+1}$ as being some element in the successor of the
successor of the $\Z^\alpha$ block of $b_j$. 
Not including isomorphic intervals, we only need to check the following cases. 
\begin{enumerate}
    \item $(b_j,b_{j+1})\cong\zeta^*_\alpha +\Z^\alpha+\zeta_\alpha \geq_{2\alpha+1} \zeta^*_\alpha
    +\Z^\alpha\cdot \L_1+\zeta_\alpha$ where $\L_1$ is infinite, 
\item $(b_n,\infty)\cong\zeta^*_\alpha +\Z^\alpha\cdot (\omega+\K) \geq_{2\alpha+1}
    \zeta^*_\alpha +\Z^\alpha\cdot \L_1$, where $\L_1$ is some final segment of
    $\omega+\L$, and thus infinite.
\end{enumerate} 
All of these relations follow from $B_\alpha$, as desired.

To complete the proof of the successor step of the induction, we show that
$C_\alpha$ and $A_{\alpha+1}$ imply $B_{\alpha+1}$.

Select a tuple  $a_0\cdots a_n$ in $\Z^{\alpha+1}\cdot \L$. These points belong
to at most $n$ distinct copies of $\Z^{\alpha+1}$. Because $|\K|\geq| \L|$, we
can match each chosen copy of $\Z^{\alpha+1}$ to one in $\Z^{\alpha+1}\cdot \K$.
Furthermore, if there is a copy of $\Z^{\alpha+1}$ between two of the chosen
copies in $\Z^{\alpha+1}\cdot \L$, we can assure that the corresponding copies
in $\Z^{\alpha+1}\cdot \K$ are also not successors. Also, if $a_n$ is not in the
last copy of $\Z^{\alpha+1}$, we can assure that neither is $b_n$ and, similarly
for $a_0$ and $b_0$ in the first copy. In particular, not including the
isomorphic intervals, we only need to check the following cases.
\begin{enumerate}
    \item $(b_j,b_{j+1})\cong \zeta^*_{\alpha+1} +\Z^{\alpha+1}\cdot
        \K_1+\zeta_{\alpha+1}\geq_{2\alpha+2}\zeta^*_{\alpha+1}
        +\Z^{\alpha+1}\cdot \L_1+\zeta_{\alpha+1}$ for some orders $\K_1$ and $\L_1$,
    \item $(b_n,\infty)\cong\zeta^*_{\alpha+1} +\Z^{\alpha+1}\cdot
        \K_1\geq_{2\alpha+2}\zeta^*_{\alpha+1} +\Z^{\alpha+1}\cdot \L_1$ for
        some orders $\K_1$ and $\L_1$,
\item $(-\infty,b_0)\cong\Z^{\alpha+1}\cdot
    \K_1+\zeta_{\alpha+1}\geq_{2\alpha+2}\Z^{\alpha+1}\cdot
    \L_1+\zeta_{\alpha+1}$ for some orders $\K_1$ and $\L_1$,
\item $(-\infty,b_0)\cong\zeta_{\alpha+1}\geq_{2\alpha+2} \Z^{\alpha+1}\cdot
    \L_1+\zeta_{\alpha+1}$ for some order $\L_1$,
\item $(b_n,\infty)\cong\zeta^*_{\alpha+1} +\Z^{\alpha+1}\cdot
    \K_1\geq_{2\alpha+2} \zeta^*_{\alpha+1} $ for some order $\K_1$.
\end{enumerate}

The first three cases are handled immediately by $A_{\alpha+1}$. By symmetry it
is sufficient to show case (5). However, this is the same as showing that 
$$\zeta_{\alpha}^* + \Z^\alpha\cdot\omega \leq_{2\alpha+2}\zeta_{\alpha}^* +
\Z^{\alpha}\cdot (\omega+\Z\cdot \K_1).$$
This follows directly from $C_\alpha$.

At last we consider the remaining limit levels. We already have seen that
$B_\lambda$ implies $C_\lambda$. The statement $A_\lambda$ follows immediately from $A_\gamma$ for $\gamma<\lambda$. 
A bit more nuanced is the issue of $B_\lambda$. It follows from $A_\lambda$ and $C_\gamma$ for $\gamma<\lambda$.
Analogous to the proof in the successor case, we analyze the possibilities:
\begin{enumerate}
\item $\zeta^*_{\lambda} +\Z^{\lambda}\cdot
    \K_1+\zeta_{\lambda}\geq_\lambda \zeta^*_{\lambda} +\Z^{\lambda}\cdot \L_1+\zeta_{\lambda} $ for some orders $\L_1$ and $\K_1$,
\item $\zeta^*_{\lambda} +\Z^{\lambda}\cdot \K_1\geq_\lambda \zeta^*_{\lambda} +\Z^{\lambda}\cdot \L_1 $ for some orders $\L_1$ and $\K_1$,
\item $\Z^{\lambda}\cdot \K_1+\zeta_{\lambda}\geq_\lambda \Z^{\lambda}\cdot \L_1+\zeta_{\lambda} $ for some orders $\L_1$ and $\K_1$,
\item $\Z^{\lambda}\cdot \K_1+\zeta_{\lambda}\geq_\lambda \zeta_{\lambda} $ for some order $\K_1$,
\item $\zeta^*_{\lambda} +\Z^{\lambda}\cdot \K_1\geq_\lambda\zeta^*_{\lambda} $ for some order $\K_1$.
\end{enumerate}

The first three cases are handled immediately by $A_{\lambda}$. By symmetry it is clear that it is enough to show case 5. However, this is the same as showing that for all $\gamma<\lambda$,
$$\zeta_{\gamma}^* + \Z^\gamma\cdot(\omega +  \Z\cdot\zeta_{\lambda}^*)
\leq_{\gamma}\zeta_{\gamma}^* + \Z^{\gamma}\cdot
(\Z\cdot\zeta_{\lambda}^*+\Z^\lambda\cdot \K_1),$$
which follows from the $C_\gamma$ for $\gamma<\lambda$.

Therefore, for any countable ordinal $\alpha$, $A_\alpha$, $B_\alpha$ and $C_\alpha$ all hold.
\end{proof}

The proposition $B_\alpha$ is analogous to Lemma II.38 in \cite{montalban2021}.
However, it has the notable advantage of being more universal, cleaner to state
and easier to apply for our purposes. It is also independently interesting as it
furthers our understanding of the behavior of powers of $\Z$, which have been
studied in \cite{GHKMMS} and \cite{Ash91}. For our purpose, what is important is
that the proposition $B_\alpha$ serves as the base case for the following lemma,
which generalizes Lemmas 7.2 and 7.3 from \cite{GHKMMS} and proposition 4.8 from \cite{Ash91}.
\begin{lemma}\label{lem:zpowersbfpreserve}
For any $\L$ and $\K$
$$\L\leq_\beta \K\implies \Z^\alpha\cdot \L\leq_{2\alpha+\beta} \Z^\alpha\cdot \K.$$
\end{lemma}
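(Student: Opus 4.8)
The plan is to prove the statement by transfinite induction on $\beta$, with $\alpha,\K,\L$ arbitrary, using the propositions $A_\alpha$ and $B_\alpha$ of \cref{lem:timesZ} as base cases and the interval characterization of the back-and-forth relations from the remark following \cref{lem:bfandpartitions} as the engine. Let $P(\beta)$ denote the assertion that $\L\leq_\beta\K$ implies $\Z^\alpha\cdot\L\leq_{2\alpha+\beta}\Z^\alpha\cdot\K$ for all $\alpha,\K,\L$. The case $\beta=0$ is $A_\alpha$ (since $\leq_0$ is trivial), and the case $\beta=1$ is exactly $B_\alpha$, because $\L\leq_1\K$ holds precisely when $\L$ is infinite or at least as large as $\K$, i.e.\ when $|\L|\geq|\K|$. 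The limit case is routine: if $\L\leq_\lambda\K$ then $\L\leq_\gamma\K$ for every $\gamma<\lambda$, so by the induction hypothesis $\Z^\alpha\cdot\L\leq_{2\alpha+\gamma}\Z^\alpha\cdot\K$; since $2\alpha+\lambda=\sup_{\gamma<\lambda}(2\alpha+\gamma)$, every ordinal below $2\alpha+\lambda$ is bounded by some $2\alpha+\gamma$, and hence $\Z^\alpha\cdot\L\leq_{2\alpha+\lambda}\Z^\alpha\cdot\K$.

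The content is in the successor step. Assume $P(\beta)$ for some $\beta\geq1$ and that $\L\leq_{\beta+1}\K$ (the case $\beta+1=1$ is the base case $B_\alpha$); I want $\Z^\alpha\cdot\L\leq_{2\alpha+\beta+1}\Z^\alpha\cdot\K$. Following the style of \cref{lem:timesZ}, I play one round of the back-and-forth game: the $\forall$-player selects a tuple $\ba$ in $\Z^\alpha\cdot\K$, and I must produce $\bb$ in $\Z^\alpha\cdot\L$ with $(\Z^\alpha\cdot\K,\ba)\leq_{2\alpha+\beta}(\Z^\alpha\cdot\L,\bb)$. Each $a_j$ lies in a $\Z^\alpha$-block corresponding to a point $k_j\in\K$; collecting the distinct such points gives a tuple in $\K$. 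Since $\L\leq_{\beta+1}\K$, the interval characterization yields a tuple in $\L$ whose induced intervals $\L^{(i)}$ satisfy $\K^{(i)}\leq_\beta\L^{(i)}$, where $\K^{(i)}$ are the corresponding intervals of $\K$. I then define $\bb$ by placing, for each $a_j$ sitting at position $z_j\in\Z^\alpha$ inside its $k_j$-block, the point $b_j$ at the same position $z_j$ inside the matching block of $\L$.

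It remains to verify $(\Z^\alpha\cdot\K,\ba)\leq_{2\alpha+\beta}(\Z^\alpha\cdot\L,\bb)$, which by the interval characterization reduces to checking each induced interval. An interval of $\Z^\alpha\cdot\K$ whose endpoints lie in distinct blocks has the form $E+\Z^\alpha\cdot\K^{(i)}+I$, where $E$ and $I$ are an end segment and an initial segment of a single $\Z^\alpha$, determined by the within-block positions $z_j$; by construction the matching interval of $\Z^\alpha\cdot\L$ is $E+\Z^\alpha\cdot\L^{(i)}+I$ with the \emph{same} boundary pieces $E,I$. From $\K^{(i)}\leq_\beta\L^{(i)}$ the induction hypothesis $P(\beta)$ gives $\Z^\alpha\cdot\K^{(i)}\leq_{2\alpha+\beta}\Z^\alpha\cdot\L^{(i)}$, and since $\leq_{2\alpha+\beta}$ is a congruence for ordered sums (the additive form of the remark following \cref{lem:bfandpartitions}), we conclude $E+\Z^\alpha\cdot\K^{(i)}+I\leq_{2\alpha+\beta}E+\Z^\alpha\cdot\L^{(i)}+I$. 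Intervals whose endpoints share a block are sub-intervals of a single $\Z^\alpha$ and are matched isomorphically, and the two end intervals are handled identically using the initial and final intervals of the partitions of $\K$ and $\L$. This establishes $P(\beta+1)$ and completes the induction.

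The main obstacle I anticipate is the boundary bookkeeping in the successor step: correctly decomposing each induced interval of $\Z^\alpha\cdot\K$ into its cut-block end/initial segments and its bulk $\Z^\alpha\cdot\K^{(i)}$, arranging $\bb$ so that the two cut-block pieces are reproduced isomorphically on the $\L$-side (so that the whole interval inequality follows from the bulk inequality by additivity rather than requiring a fresh argument), and treating the degenerate intervals where several chosen points fall in a common block. The genuinely hard analytic work---that the $\Z^\alpha$-blocks absorb exactly $2\alpha$ levels of the game regardless of $\K$ and $\L$---is already packaged into the base case $B_\alpha$ of \cref{lem:timesZ}, so the $\beta$-induction only needs to transport the remaining $\beta$ levels through the relation $\L\leq_\beta\K$.
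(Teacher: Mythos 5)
Your proposal is correct and takes essentially the same approach as the paper: induction on $\beta$ with $B_\alpha$ as the base case, and a successor step in which the $\exists$-player responds to each point of $\Z^\alpha\cdot\K$ by choosing the \emph{same position} inside the $\Z^\alpha$-block sitting over the point of $\L$ supplied by a winning strategy for $\L\leq_{\beta+1}\K$, so that every induced interval splits as identical boundary segments around a bulk $\Z^\alpha\cdot(\text{interval})$ handled by the induction hypothesis. The paper phrases this with strategy notation $(\delta_i,b_i)\mapsto(\delta_i,s(b_i))$ rather than your explicit $E$/$I$ bookkeeping, but the argument is the same.
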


\begin{proof}
 We demonstrate this by induction on $\beta$. Notice that the base case is given by the above observation. 
 The limit case follows immediately by the definition of the back-and-forth relations along with the observation that for any linear order $\N$, $\L\leq_\beta \K \implies \N\cdot \L\leq_\beta \N\cdot \K$.
 Thus, we need only look at the successor case.

 Let $\beta=\gamma+1$ and assume $\L\leq_{\gamma+1} \K$. We need to show that
 $\Z^\alpha\cdot \L\leq_{2\alpha+\gamma+1}\Z^\alpha\cdot \K.$ We view
 $\Z^{\alpha}\cdot K$ as a product ordering with elements of the the form
 $(\delta,b)$ where $b\in K$ and $\delta\in \Z^{\alpha}$. Consider a play of the
 game where the $\forall$-player plays a tuple $(\delta_i,b_i)_{i\in k}$ from $\Z^\alpha\cdot
 \K$. If $s$ is a winning strategy for demonstrating that $\L\leq_{\gamma+1}
 \K$, the $\exists$-player may play $(\delta_i,s(b_i))_{i\in k}$ in response. Let
 $b_{-1}=s(b_{-1})=-\infty$ and $b_k=s(b_k)=\infty$ by convention. We need only
 show that, $$[(\delta_i,b_i),(\delta_{i+1},b_{i+1})] \leq_{2\alpha+\gamma}
 [(\delta_i,s(b_i)),(\delta_{i+1},s(b_{i+1}))].$$ Note that the left hand side
 is isomorphic to $\zeta_\alpha+\Z^\alpha\cdot(b_i,b_{i+1})+\zeta_\alpha^*$ and
 the right hand side is isomorphic to
 $\zeta_\alpha+\Z^\alpha\cdot(s(b_i),s(b_{i+1}))+\zeta_\alpha^*$. As $s$ is a
 winning strategy, by induction,
 \[\Z^\alpha\cdot(b_i,b_{i+1})\leq_{2\alpha+\gamma}
 \Z^\alpha\cdot(s(b_i),s(b_{i+1})),\] which demonstrates the claim.
\end{proof}

The following Proposition follows immediately from \cref{lem:zpowersbfpreserve}.
\begin{proposition}
For any linear order $\L$, if there is a $\K$ with $\L\leq_\beta \K$ yet $\L\not\cong \K$ then $\Z^\alpha\cdot \L$ has no $\Pinf{2\alpha+\beta}$ Scott sentence.
For any linear order $\L$, if there is a $\K$ with $\L\geq_\beta \K$ yet $\L\not\cong \K$ then $\Z^\alpha\cdot \L$ has no $\Sinf{2\alpha+\beta}$ Scott sentence.
\end{proposition}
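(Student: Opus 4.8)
The plan is to read both statements off the standard back-and-forth characterization of Scott sentence complexity together with \cref{lem:zpowersbfpreserve}. Recall that a structure $\A$ has a $\Pinf{\gamma}$ Scott sentence if and only if $\A\leq_\gamma\B$ implies $\A\cong\B$ for every $\B$, and dually that $\A$ has a $\Sinf{\gamma}$ Scott sentence if and only if $\B\leq_\gamma\A$ implies $\A\cong\B$ for every $\B$ (this is exactly the direction already used in the proof of \cref{lem:universalrankinvariant}). Hence, to rule out a $\Pinf{\gamma}$ (respectively $\Sinf{\gamma}$) Scott sentence for $\Z^\alpha\cdot\L$, it suffices to produce a single structure $\B\not\cong\Z^\alpha\cdot\L$ with $\Z^\alpha\cdot\L\leq_\gamma\B$ (respectively $\B\leq_\gamma\Z^\alpha\cdot\L$), with $\gamma=2\alpha+\beta$.

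For the first statement I would take $\B=\Z^\alpha\cdot\K$. From the hypothesis $\L\leq_\beta\K$, \cref{lem:zpowersbfpreserve} yields $\Z^\alpha\cdot\L\leq_{2\alpha+\beta}\Z^\alpha\cdot\K$, which is precisely the hypothesis appearing in the $\Pinf{2\alpha+\beta}$ characterization above. For the second statement I would instead start from $\K\leq_\beta\L$ and apply \cref{lem:zpowersbfpreserve} with the roles of $\L$ and $\K$ interchanged, obtaining $\Z^\alpha\cdot\K\leq_{2\alpha+\beta}\Z^\alpha\cdot\L$; now $\B=\Z^\alpha\cdot\K$ sits on the small side of the $\leq_{2\alpha+\beta}$ relation, matching the $\Sinf{2\alpha+\beta}$ characterization. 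So in both cases the required back-and-forth inequality is immediate from the lemma once the roles are assigned correctly.

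The only point that is not purely formal is verifying $\Z^\alpha\cdot\L\not\cong\Z^\alpha\cdot\K$, which is where the hypothesis $\L\not\cong\K$ enters. This is a cancellation (left-injectivity) property of multiplication by $\Z^\alpha$: the order type $\L$ can be recovered from $\Z^\alpha\cdot\L$ up to isomorphism by an appropriate (iterated) Hausdorff condensation that collapses each copy of $\Z^\alpha$ to a point, using that two points of $\Z^\alpha\cdot\L$ lie in distinct copies of $\Z^\alpha$ exactly when the interval between them is too large to embed into $\Z^\alpha$. Consequently $\Z^\alpha\cdot\L\cong\Z^\alpha\cdot\K$ would force $\L\cong\K$, so $\L\not\cong\K$ gives the needed non-isomorphism in both cases. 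I expect this cancellation step to be the only genuine (and minor) obstacle; with it in hand, combining the two displayed back-and-forth inequalities with the two Scott-sentence characterizations finishes both statements at once, justifying the claim that the proposition follows immediately from \cref{lem:zpowersbfpreserve}.
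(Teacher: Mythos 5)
Your proposal is correct and is essentially the paper's own argument: the paper presents this proposition as an immediate consequence of \cref{lem:zpowersbfpreserve}, and your derivation---apply that lemma with $\B=\Z^\alpha\cdot\K$ on the appropriate side of the relation, then invoke the standard back-and-forth characterizations of $\Pinf{\gamma}$ and $\Sinf{\gamma}$ Scott sentences (the same direction used in \cref{lem:universalrankinvariant})---is exactly the intended reasoning. The one step the paper leaves tacit, the cancellation $\Z^\alpha\cdot\L\cong\Z^\alpha\cdot\K\Rightarrow\L\cong\K$, you rightly flag and justify; it can also be obtained from the $\sim_\alpha$-block machinery the paper uses later, since the $\sim_\alpha$-classes of $\Z^\alpha\cdot\L$ are precisely the copies of $\Z^\alpha$ and the quotient recovers $\L$.
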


This allows to transfer lower bounds for the Scott sentence complexity of $\L$ to $\Z^\alpha\cdot \L$.

We now move to proving the upper bound. In particular, we must demonstrate that
multiplying by a power of $\Z$ does not make the Scott rank too high. For this
we use the notion of the block relation $\sim_\alpha$. We follow the definitions
in~\cite{AR20}.

\begin{lemma}
If $\L$ has a Scott sentence of complexity $\Ginf{\beta}$ for
$\Gamma\in\{\Sigma,\Pi,d\text{-}\Sigma\}$, then the linear order $\Z^\alpha\cdot \L$ has a Scott sentence of complexity $\Ginf{2\alpha+\beta}$.
\end{lemma}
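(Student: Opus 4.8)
The plan is to construct a Scott sentence for $\Z^\alpha\cdot\L$ of the prescribed shape $\Gamma$ directly, by pulling the Scott sentence of $\L$ back along the condensation whose classes are the copies of $\Z^\alpha$. First I would fix the block relation $\sim$ on $\Z^\alpha\cdot\L$ coming from \cite{AR20} at level $2\alpha$, where $x\sim y$ holds iff the interval between $x$ and $y$ is an interval of a single copy of $\Z^\alpha$. The two facts I need are: (i) its classes are exactly the copies of $\Z^\alpha$, so that the condensation $(\Z^\alpha\cdot\L)/{\sim}$ is isomorphic to $\L$; and (ii) it is $\Delta^{\mathrm{in}}_{2\alpha}$-definable, with dual $\Sinf{2\alpha}$/$\Pinf{2\alpha}$ formulas uniformly distinguishing ``the open interval $(x,y)$ contains a full copy of $\Z^\alpha$'' from ``it does not.'' Both are exactly where the control of $\Z^\alpha$-intervals from \cref{lem:timesZ} enters: the propositions $A_\alpha,B_\alpha,C_\alpha$ say that $\Z^\alpha$-intervals are settled up to $\equiv_{2\alpha}$, which is precisely what makes ``contains a copy of $\Z^\alpha$'' a $\Sinf{2\alpha}$ property and prevents adjacent copies from merging into one $\sim$-class.

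Given this, let $\phi_\L$ be the $\Ginf{\beta}$ Scott sentence of $\L$, and let $\psi$ be the conjunction of two sentences. The first conjunct asserts ``every $\sim$-class is a copy of $\Z^\alpha$,'' using a fixed $\Pinf{2\alpha+1}$ characterization of the single block $\Z^\alpha$ (which is $\Pinf{2\alpha+1}$ by the standard analysis of powers of $\Z$, cf.\ \cite{GHKMMS,Ash91}). The second conjunct $\phi_\L^{\sim}$ is obtained from $\phi_\L$ by relativizing every quantifier to range over $\sim$-classes (via representatives), replacing $x<y$ by ``the class of $x$ lies below the class of $y$'' and $x=y$ by $x\sim y$. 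Since $\sim$ and its induced order are $\Delta^{\mathrm{in}}_{2\alpha}$, relativizing a $\Ginf{\beta}$ sentence yields a $\Ginf{2\alpha+\beta}$ sentence: the $\Delta^{\mathrm{in}}_{2\alpha}$ guards sit at the bottom and contribute a single additive shift of $2\alpha$, preserving the outer shape $\Gamma$. As $\beta\geq 1$ in every case, the first conjunct lies in $\Pinf{2\alpha+1}\subseteq\Ginf{2\alpha+\beta}$, so using that $\Sinf{\gamma}$, $\Pinf{\gamma}$, and $\dSinf{\gamma}$ absorb a conjoined $\Pinf{\gamma}$-conjunct, $\psi$ is $\Ginf{2\alpha+\beta}$.

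It then remains to check that $\psi$ is a Scott sentence. Plainly $\Z^\alpha\cdot\L\models\psi$. Conversely, if $\N\models\psi$, then every $\sim$-class of $\N$ is a copy of $\Z^\alpha$, and by the first part the condensation $\N/{\sim}$ satisfies $\phi_\L$, hence $\N/{\sim}\cong\L$; since a linear order all of whose $\sim$-classes are copies of $\Z^\alpha$ is, being $\sum_{\ell\in\L}\Z^\alpha$, determined up to isomorphism by its condensation, we get $\N\cong\Z^\alpha\cdot\L$. This produces the desired $\Ginf{2\alpha+\beta}$ Scott sentence. Note that this asserts only the existence of such a sentence; for infinite $\alpha$ the true complexity may be strictly lower (e.g.\ for $\L=1$, where the companion lower bound is vacuous), but this is consistent since any simpler $\Ginf{\gamma}$ sentence is also a $\Ginf{2\alpha+\beta}$ sentence.

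The main obstacle is step (ii): verifying that the block relation identifying the $\Z^\alpha$-copies is definable at exactly level $2\alpha$ and no higher, so that relativization costs an additive $2\alpha$ rather than more. This is the point where the self-similarity of $\Z^\alpha$ quantified by $A_\alpha,B_\alpha,C_\alpha$ in \cref{lem:timesZ} must be combined with the condensation theory of \cite{AR20}; the delicate part is confirming that ``no full copy of $\Z^\alpha$ lies strictly between $x$ and $y$'' is genuinely $\Pinf{2\alpha}$ and that distinct adjacent copies are never $\sim$-related, both of which rest on the level-$2\alpha$ classification of $\Z^\alpha$-intervals rather than on any coarser data.
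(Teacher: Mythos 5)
Your proposal is correct and is essentially the paper's own argument: both relativize the given $\Ginf{\beta}$ Scott sentence of $\L$ along the block relation $\sim_\alpha$ (whose $\Sinf{2\alpha}$-definability is exactly the cited fact from~\cite{AR20}, which also supplies the dual $\Pinf{2\alpha}$ formula for $\not\sim_\alpha$ needed at the leaves), and conjoin a $\Pinf{2\alpha+1}$ sentence forcing every $\sim_\alpha$-class to be a copy of $\Z^\alpha$, so that any model is $\sum_{\ell\in\L}\Z^\alpha\cong\Z^\alpha\cdot\L$. The only deviations are cosmetic: the full strength of ``$\Delta^{\mathrm{in}}_{2\alpha}$'' for $\sim_\alpha$ is not needed (one polarity in each of $\Sinf{2\alpha}$ and $\Pinf{2\alpha}$ suffices), and the lower-bound machinery of \cref{lem:timesZ} plays no role in this upper bound.
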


\begin{proof}
Given a formula $\phi$ in the language of linear orders we define $\phi_\alpha$
as being the same as $\phi$ except for the fact that instances of $x<y$ are
replaced by $x<y \land x\not\sim_\alpha y$ and instances of $x=y$ are replaced
by $x\sim_\alpha y$. Note that if $\phi$ is $\Ginf{\beta}$, then $\phi_\alpha$
is at worst $\Ginf{2\alpha+\beta}$ by the fact that $\sim_\alpha$ is
$\Sinf{2\alpha}$ definable (see~\cite[Proposition 4]{AR20}). It is also not
difficult to see that if $\phi$ is a Scott sentence for $\L$, then
$\N\models\phi_\alpha$ guarantees that $\N{/}{\sim_\alpha}\cong \L$. 
Finally, we define
\[\psi= ~ \forall x \bigwwedge_{\delta<\alpha} \bigwwedge_{n\in\omega} (\exists
y\ S_\delta^n(x)=y)\land (\exists y\ S_\delta^n(y)=x).\]

Here, $S_\delta^n(x)=y$ is shorthand for saying that $y$ is the $n$th
$\delta$-successor of $x$. In other words,
\[ \exists z_0\cdots z_{n} ~ x=z_0
\land y=z_n \land \bigwedge_{i<n} z_i\not\sim_\delta z_{i+1} \land \forall w~
z_i<w<z_{i+1} \to (w\sim_\delta z_i \lor w\sim_\delta z_{i+1}).\] 
Overall, this is $\Sinf{2\delta+2}$. This
gives that $\psi$ is, at worst, $\Pinf{2\alpha+1}$. Furthermore, $\psi$
guarantees that every $\sim_\alpha$ equivalence class is isomorphic to
$\Z^\alpha$. Therefore, $\phi_\alpha\land\psi$ gives the desired Scott sentence
for $\Z^\alpha\cdot \L$.
\end{proof}

This result along with the previous constructions gives that there are linear orders with any Scott sentence complexity that is not too close to a limit ordinal.

\begin{corollary}\label{cor:SSCsuccessors}
There are linear orders of the following Scott sentence complexities:
\begin{enumerate}
\item $\Sinf{\alpha+n}$ for any countable ordinal $\alpha$ and $n\geq 4$,
\item $\dSinf{\alpha+n}$ for any countable ordinal $\alpha $ and $n\geq 1$,
\item $\Pinf{\alpha+n}$ for any countable ordinal $\alpha$ and $n\geq 1$.
\end{enumerate}
\end{corollary}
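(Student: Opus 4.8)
The plan is to realize every target complexity as the Scott sentence complexity of an order of the form $\Z^\gamma\cdot\L$, where $\L$ is one of a small stock of base examples of low complexity and $\gamma$ is chosen by ordinal arithmetic to shift the level into place. Both halves of the argument are already available: the lemma immediately preceding the corollary gives the upper bound $SSC(\Z^\gamma\cdot\L)\leq\Ginf{2\gamma+\beta}$ whenever $\L$ has a $\Ginf{\beta}$ Scott sentence, and the proposition following \cref{lem:zpowersbfpreserve} gives the matching lower bounds: a single non-isomorphic back-and-forth witness for $\L$ at level $\beta$ propagates, via \cref{lem:zpowersbfpreserve}, to exclude a Scott sentence of the complementary shape for $\Z^\gamma\cdot\L$ at level $2\gamma+\beta$. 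So the work reduces to choosing base orders and doing the arithmetic.

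For each $\Gamma$ I would fix two base orders at \emph{consecutive} levels, one for each parity of the finite offset. For $\Pi$: the one-point order ($\Pinf{1}$) and $\Q$ ($\Pinf{2}$). For $d\text{-}\Sigma$: a two-point order ($\dSinf{1}$) and $1+\Q$ ($\dSinf{2}$). For $\Sigma$: the order $2\cdot\Q+1+\Q$ ($\Sinf{4}$) and the $\Sinf{5}$ example constructed above. The one base case needing a separate check is $1+\Q$: it has the $\dSinf{2}$ Scott sentence asserting density together with the existence of a least and the absence of a greatest element, while $\Q\leq_2 1+\Q$ and $1+\Q\leq_2\Q$ (both by \cref{lem:universal}) with $\Q\not\cong 1+\Q$ rule out a $\Pinf{2}$ and a $\Sinf{2}$ Scott sentence. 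The witness $\K$ needed for each lower bound is automatic: a base order $\L$ with $SSC(\L)=\Ginf{\beta}$ lacks the complementary Scott sentence at level $\beta$, and by the back-and-forth characterization of Scott sentence complexity this hands us $\K\not\cong\L$ with $\L\leq_\beta\K$ (for the $\Pi$-base) or $\K\leq_\beta\L$ (for the $\Sigma$-base), and both for the $d\text{-}\Sigma$-base. For instance $2\cdot\Q+1+\Q\leq_4 2\cdot\Q+\Q$ was already noted, and for $1+\Q$ the order $\Q$ serves as witness on both sides.

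To hit every level, write a target $\theta=\lambda+k$ with $\lambda$ zero or a limit and $k<\omega$. The arithmetic fact I would lean on is $2\cdot\lambda=\lambda$ for limit $\lambda$ (as $\lambda=\omega\cdot\delta$ and $2\cdot(\omega\cdot\delta)=\omega\cdot\delta$), whence $2\cdot(\lambda+i)=\lambda+2i$. Thus $\theta=2\gamma+\beta$ with $\gamma=\lambda+i$ as soon as $k=2i+\beta$ for one of the two available base levels $\beta$; since those levels are consecutive, both parities of $k$ are caught, and the ranges ($\beta\in\{1,2\}$ for $\Pi$ and $d\text{-}\Sigma$, $\beta\in\{4,5\}$ for $\Sigma$) match the claimed bounds on $k$ exactly. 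For the chosen $\gamma,\beta,\L$ the upper-bound lemma produces a $\Ginf{\theta}$ Scott sentence, and the proposition excludes $\Sinf{\theta}$ when $\Gamma=\Pi$, excludes $\Pinf{\theta}$ when $\Gamma=\Sigma$, and excludes both when $\Gamma=d\text{-}\Sigma$ (using $\Z^\gamma\cdot\L\not\cong\Z^\gamma\cdot\K$, which follows from $\L\not\cong\K$). A short lattice check closes the gap: everything strictly below $\Sinf{\theta}$ is $\leq\Pinf{\theta}$ and everything strictly below $\Pinf{\theta}$ is $\leq\Sinf{\theta}$, so excluding the complementary class at level $\theta$ excludes every complexity strictly below the target and pins $SSC(\Z^\gamma\cdot\L)=\Ginf{\theta}$.

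The genuinely delicate points are twofold. First, one must certify that the base examples have \emph{exactly} the claimed complexities with explicit non-isomorphic witnesses — in particular isolating a clean $\dSinf{2}$ order, for which $1+\Q$ works. Second, the ordinal bookkeeping must be airtight: I would verify that $2\gamma+\beta$, as $\gamma$ ranges over all countable ordinals and $\beta$ over the two consecutive base levels, sweeps out precisely the intended set of levels — neither skipping an offset $k$ just above a limit nor intruding on the separately treated near-limit cases $\Sinf{\lambda+1},\Sinf{\lambda+2},\Sinf{\lambda+3}$, which is exactly why the $\Sigma$-range starts at $\beta=4$. Once these are in place, the back-and-forth and lattice arguments are routine given the machinery already developed.
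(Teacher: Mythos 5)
Your proposal runs on the same engine as the paper's proof: the upper bound comes from the lemma producing a $\Ginf{2\gamma+\beta}$ Scott sentence for $\Z^\gamma\cdot\L$, the lower bound from \cref{lem:zpowersbfpreserve} and the proposition following it, and the $\Sigma$ cases use the identical bases $2\cdot\Q+1+\Q$ and the $\Sinf{5}$ example. Where you genuinely diverge is at the odd offsets for $\Pi$ and $d$-$\Sigma$: the paper invokes Ash's analysis of well-orderings to get that $\omega^\gamma$ and $\omega^\gamma\cdot 2$ have complexities $\Pinf{2\gamma+1}$ and $\dSinf{2\gamma+1}$, while you get the same levels from the finite bases $1$ and $2$ (so $\Z^\gamma$ and $\Z^\gamma\cdot 2$), staying entirely inside the machinery of \cref{lem:zpowersbfpreserve}; this is more self-contained, and the needed witnesses at $\beta=1$ ($2\leq_1 1$, $3\leq_1 2$, etc.) are immediate from \cref{lem:bfandpartitions}. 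Your lattice argument for pinning the exact complexity and your ordinal arithmetic ($2\cdot(\lambda+i)=\lambda+2i$) are both correct and match what the paper does implicitly.

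There is, however, one step that fails as written: the claim that $\Q\leq_2 1+\Q$, attributed to \cref{lem:universal}. That lemma is one-directional --- it gives $\A\leq_2\Q$ for every infinite $\A$, hence $1+\Q\leq_2\Q$, but never a relation with $\Q$ on the left. And the relation you assert is false outright: the $\Pinf{2}$ sentence $\forall x\,\exists y\,(y<x)$ is true in $\Q$ and false in $1+\Q$, so $\Q\not\leq_2 1+\Q$. This is exactly the witness you need to rule out a $\Sinf{2}$ Scott sentence for $1+\Q$, and hence (via the proposition) a $\Sinf{2\gamma+2}$ Scott sentence for $\Z^\gamma\cdot(1+\Q)$; without it the even $d$-$\Sigma$ case is unproved. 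The gap is local and fixable: $1+\Q+1\leq_2 1+\Q$ (and also $2+\Q\leq_2 1+\Q$) follows easily from \cref{lem:bfandpartitions}, since in any partition of $1+\Q$ each interval is either empty (when an endpoint is the least element, which the response can mirror) or infinite; either order then serves as the missing witness, as both are non-isomorphic to $1+\Q$. Alternatively you could simply adopt the paper's base $\Q+2+\Q$, for which $\Q+3+\Q\leq_2\Q+2+\Q\leq_2\Q$ supplies witnesses on both sides. With that repair your argument is complete.
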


\begin{proof}
Let $\L_4$ be the constructed example of Scott sentence complexity $\Sinf4$ and $\K_4$ such that $\L_4\leq_4 \K_4$. 
Similarly define $\L_5$ and $\K_5$.
For any $\alpha$ we know that $\Z^\alpha\cdot \L_4$ has a $\Sinf{2\alpha+4}$
Scott sentence, yet it has no $\Pinf{2\alpha+4}$ Scott sentence as
$\Z^\alpha\cdot \L_4\leq_{2\alpha+4} \Z^\alpha\cdot \K_4$. Thus, $\Z^\alpha\cdot
\L_4$ has Scott sentence complexity $\Sinf{2\alpha+4}$. Similarly, $\Z^\alpha\cdot \L_5$ has Scott sentence complexity $\Sinf{2\alpha+5}$. These constructions provide examples for all of the claimed cases of the form $\Sinf{\alpha+n}$.

Let $\L_2:=\Q+2+\Q$. It is not difficult to see that this order has a $\dSinf2$
Scott sentence and that $\Q+3+\Q\leq_2 \L_2\leq_2 \Q$, so this is indeed
optimal. Using the same reasoning as above, we see that $\Z^\alpha\cdot \L_2$
has Scott sentence complexity $\dSinf{2\alpha+2}$. Ash~\cite{ash1986} analyzed
the back-and-forth relations of well-orderings and it follows from his results
that $\omega^\alpha\cdot 2$ has Scott sentence complexity $\dSinf{2\alpha+1}$
(see~\cite[proof of Proposition 19]{AR20}). These constructions provide examples
for all of the claimed cases of the form $\dSinf{\alpha+n}$.

It is a basic exercise to show that $\Q$ has Scott sentence complexity $\Pinf2$.
Using the same reasoning as above, we see that $\Z^\alpha\cdot \Q$ has Scott
sentence complexity $\Pinf{2\alpha+2}$. Furthermore, it again follows from
results of Ash~\cite{ash1986} that $\omega^\alpha$ has Scott sentence complexity
$\Pinf{2\alpha+1}$. These constructions provide examples for all of the claimed
cases of the form $\Pinf{\alpha+n}$. \end{proof}

This covers every case except for some possibilities that lie close to limit
ordinals. The following results fill most of those gaps.

\begin{proposition}\label{prop:scatteredlimit}
For any limit ordinal $\lambda,$ there is a scattered linear order of Scott sentence complexity $\Pinf{\lambda}.$
\end{proposition}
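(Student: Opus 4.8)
The plan is to build the required order as an $\omega$-sum of blocks of strictly increasing $\Z$-power complexity, separated by finite adjacency markers whose only role is to make the block containing a point cheaply identifiable. Fix a strictly increasing fundamental sequence $(\delta_i)_{i\in\omega}$ for $\lambda$ with every $\delta_i\geq 1$ and set
\[\L=\sum_{i\in\omega}\bigl(2+\Z^{\delta_i}+\Z^{\delta_i}\bigr),\]
where $2$ is the two-element chain. Each summand is scattered and $\omega$ is scattered, so $\L$ is scattered. Since $\Z^{\delta_i}$ has no adjacent pair for $\delta_i\geq1$, the copies of $2$ are the only adjacencies of $\L$, each isolated by gaps; hence ``there are at least $m$ adjacencies below $x$'' is uniformly $\Sinf 2$ and ``exactly $m$ adjacencies below $x$'' is uniformly $\dSinf 3$. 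In particular the index $i$ of the block $2+\Z^{\delta_i}+\Z^{\delta_i}$ containing a point is definable by a fixed-complexity ($\dSinf 3$) formula, uniformly in $i$.

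First I would compute the automorphism orbits. Because $\Z^{\delta_i}$ is homogeneous, within the $i$-th block the points of the first copy form one orbit and those of the second copy another; the two are separated only by whether a full interval $\cong\Z^{\delta_i}$ lies between the point and the marker below it. By $A_{\delta_i}$ and $B_{\delta_i}$ of \cref{lem:timesZ} any two orderings built as $\Z^{\delta_i}\cdot\K$ are $\equiv_{2\delta_i}$, so these two orbits cannot be told apart below level $2\delta_i$, while \cref{lem:zpowersbfpreserve} together with the interval decomposition from \cref{lem:bfandpartitions} shows the discriminating interval is detected at level $2\delta_i+c$ for a fixed constant $c$. Conjoining with the $\dSinf 3$ block index (and, for longer tuples, taking the conjunction over coordinates together with the finitely many order and successor relations) gives that every orbit is $\Sinf{2\delta_i+c}$-definable and is not definable below $2\delta_i$. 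As the values $2\delta_i+c$ are cofinal in $\lambda$ but each is $<\lambda$, this yields $\SR(\L)=\lambda$; in particular no orbit requires level $\lambda$, so $\SR(\L)\neq\lambda+1$.

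The heart of the proof is to show that $\L$ has \emph{no unstable $\lambda$-sequence}; granting this, the characterization of $\Pinf\lambda$ complexity through unstable $\lambda$-sequences, together with $\SR(\L)=\lambda$ and \cref{table:invariants} (which leaves only $\Pinf\lambda$ and $\Sinf{\lambda+1}$ at this rank), forces $SSC(\L)=\Pinf\lambda$. So suppose $(\by_k)_{k\in\omega}$ is a $\lambda$-sequence witnessed by a fundamental sequence $(\alpha_k)$, and discard the finitely many $k$ with $\alpha_k\leq 3$. Since the multiset of adjacency-counts of the coordinates is a level-$3$ invariant, $\by_k\equiv_{\alpha_k}\by_{k+1}$ forces $\by_k$ and $\by_{k+1}$ to occupy exactly the same blocks; chaining, all $\by_k$ are supported in a single fixed finite union $U$ of blocks, whose largest index I call $m$. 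This $U$ is a $\dSinf3$-definable convex piece of complexity at most $2\delta_m+c<\lambda$, so once $\alpha_k\geq 2\delta_m+c$ the equivalence $\by_k\equiv_{\alpha_k}\by_{k+1}$ restricts to an isomorphism of the two tuples inside $U$, which extends to an automorphism of $\L$ fixing every other block. Hence $\by_k\cong\by_{k+1}$ and so $\by_k\equiv_{\alpha_{k+1}}\by_{k+1}$: the sequence is stable.

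The hard part will be the two back-and-forth computations underlying the middle step: verifying that the two copies of $\Z^{\delta_i}$ inside a block are confused exactly up to level $2\delta_i$ (so the orbit complexities are genuinely cofinal in $\lambda$, giving $\SR=\lambda$ rather than something smaller), and that high $\equiv_\gamma$-equivalence of tuples of $\L$ restricts faithfully to the $\dSinf3$-definable sub-blocks $U$. Both reduce to careful bookkeeping with \cref{lem:timesZ,lem:zpowersbfpreserve} and the interval decomposition of \cref{lem:bfandpartitions}. Conceptually, the markers are exactly what keeps ``confusable'' pairs living in different blocks from ever assembling into a single sequence of pairwise-increasing agreement; this is the same phenomenon, one quantifier-level down, as the failure of formula-level compactness exhibited by the $\phi_n$.
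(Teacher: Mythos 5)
Your overall architecture---a scattered $\omega$-sum of blocks whose complexities are cofinal in $\lambda$, low-complexity identification of the block containing a point, $\SR(\L)=\lambda$, and then $SSC(\L)=\Pinf{\lambda}$ via the no-unstable-$\lambda$-sequence criterion of \cref{sec:fs-ssc}---is viable, but it rests on a false statement. You claim that $\Z^{\delta_i}$ has no adjacent pair for $\delta_i\geq 1$; in fact \emph{every} point of $\Z^{\delta}$ has an immediate successor and an immediate predecessor (already $\Z^{1}=\Z$ consists entirely of adjacencies, and in the lexicographic power $\Z^{\delta}$ one can always shift the bottom coordinate by $1$). Consequently your markers are far from being the only adjacencies of $\L$: below any point past the first block there are infinitely many adjacent pairs, so ``there are at least $m$ adjacencies below $x$'' holds of everything, defines nothing, and the claimed $\Sinf{2}$ and $\dSinf{3}$ block-index formulas fail, as does the later claim that the multiset of adjacency-counts is a level-$3$ invariant. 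The repair is to count \emph{maximal finite blocks} (adjacent pairs $a<b$ where $a$ has no immediate predecessor and $b$ no immediate successor), i.e., to work with the finite condensation exactly as the paper does with its ``$1$-blocks'': since the copies of $\Z^{\delta_i}$ have no endpoints, your markers are precisely the maximal finite blocks, each is $\Pinf{2}$-definable, and the counting formulas land one quantifier level higher, still far below $\lambda$. With this correction the rest of your outline survives, though the two back-and-forth computations you defer as ``the hard part'' (confusability of the two copies up to level $2\delta_i$, and faithfulness of high equivalence on the bounded intervals) are real work and need the $\zeta_{\delta_i}$-variants from inside the proof of \cref{lem:timesZ}, not just the statements $A_\alpha$, $B_\alpha$ themselves.

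It is also worth comparing routes, because yours is genuinely different from the paper's. The paper takes $\L_\lambda=\sum_{i\in\omega} i+\Z^{\delta_i}$ (increasing finite block sizes, so no counting is needed), writes an \emph{explicit} $\Pinf{\lambda}$ Scott sentence---exactly one finite block of each size, ordered like $\omega$, with the interval between consecutive finite blocks isomorphic to $\Z^{\delta_n}$, each conjunct being $\Pinf{<\lambda}$ because these intervals have Hausdorff rank below $\lambda$---and gets the matching lower bound by observing that the $\Z^{\delta_n}$ are definable over parameters and have unbounded Scott rank below $\lambda$, citing \cref{lem:timesZ} and Montalb\'an's lemma on definable subsets. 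You replace the explicit sentence with the unstable-$\lambda$-sequence characterization, which is legitimate given $\SR(\L)=\lambda$, and it is a nice stress test of that machinery; but it is not cheaper, since stabilizing every $\lambda$-sequence requires the same two ingredients (definable block structure, and that every interval with endpoints in blocks of index $\leq m$ has Scott complexity at most roughly $2\delta_m$) plus the interval-decomposition bookkeeping of \cref{lem:bfandpartitions}. Two further small points: the second copy of $\Z^{\delta_i}$ in each block is unnecessary, since the cofinal lower bound on orbit complexity can be extracted from tuples inside a single copy (this is exactly what the paper's appeal to definable substructures does); and your reading of \cref{table:invariants} is off---parameterless Scott rank $\lambda$ leaves $\Pinf{\lambda}$ and $\Pinf{\lambda+1}$ as the candidates, not $\Sinf{\lambda+1}$, which requires rank $\lambda+1$---though this is immaterial because the characterization lemma alone decides the matter.
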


\begin{proof}
Let $(\delta_n)_{n\in\omega}$ be a fundamental sequence for $\lambda$. We show that
\[\L_\lambda:=\sum_{i\in\omega}i+\Z^{\delta_n}\]
is of the desired complexity. 

First note that it indeed does have a $\Pinf{\lambda}$ Scott sentence. The sentence states the following:
\begin{enumerate}\tightlist
    \item There is exactly one 1-block isomorphic to each natural number. 
    \item These 1-blocks are ordered like the natural numbers.
    \item The order between $n$ and $n+1$ is isomorphic to $\Z^{\delta^n}.$
\end{enumerate}
This is $\Pinf{\lambda}$ because each of the items is $\Pinf{<\lambda}$. 
The first two statements can be expressed using finitely many alternations of quantifiers. 
The final statement can be expressed by the Scott sentence of $\Z^{\delta_n}$
relativized to the specific interval, this sentence has Scott sentence
complexity below $\lambda$ by~\cite{AR20}, as the Hausdorff rank of the interval
is below $\lambda$. 

By Lemma~\ref{lem:timesZ}, the $\Z^{\delta_n}$ have unbounded Scott rank below $\lambda$. 
As these are all $\Dinf0$-definable over parameters in $\L_\lambda$,
by~\cite[Lemma 4.3]{MonIntermediate}, we have that for each $n$, $\SR(\L_\lambda)\geq\delta_n,$ so $\Pinf{\lambda}$ is indeed the optimal Scott sentence complexity.
\end{proof}

We can use a variation of this construction to get a structure of Scott sentence complexity $\Sinf{\lambda+2}.$
Unlike some of the previous constructions, this construction is not a sum of shuffle sums, so Corollary \ref{lower} will not apply.

\begin{theorem}\label{thm:Slim+2}
For any limit ordinal $\lambda,$ there is a linear order of Scott sentence
complexity $\Sinf{\lambda+2}.$
\end{theorem}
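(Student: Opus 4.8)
The plan is to read the target invariants off \cref{table:invariants}: a linear ordering $\L$ has Scott sentence complexity $\Sinf{\lambda+2}$ exactly when its parameterized Scott rank is $\lambda$ and its parameterless Scott rank is $\lambda+2$. Equivalently, since $\dSinf{\lambda+1}$ and every complexity below it are contained in $\Pinf{\lambda+2}$, it suffices to exhibit an $\L$ that (i) has an $\Sinf{\lambda+2}$ Scott sentence and (ii) has no $\Pinf{\lambda+2}$ Scott sentence. I would get (i) by producing a single parameter $c$ with $\SR(\L,c)=\lambda$ whose automorphism orbit is $\Pinf{\lambda+1}$-definable: then $\exists x\,(\chi(x)\wedge\sigma^{x})$, with $\chi\in\Pinf{\lambda+1}$ defining the orbit of $c$ and $\sigma^{x}$ the relativized $\Pinf{\lambda}$ Scott sentence of the rank-$\lambda$ ordering $(\L,c)$, is the desired $\Sinf{\lambda+2}$ sentence.

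The building block is the ordering $\Z^{\lambda}$. Applying the lemma computing the complexity of $\Z^{\alpha}\cdot\K$ to the one-point ordering, $\Z^{\lambda}=\Z^{\lambda}\cdot 1$ has complexity $\Pinf{2\lambda+1}=\Pinf{\lambda+1}$ (using $2\lambda=\lambda$), hence parameterless Scott rank $\lambda$ and, crucially, no $\Sinf{\lambda+1}$ Scott sentence. This is the rank-$\lambda$ analogue of the ordering $2\cdot\Q$ (of complexity $\Pinf{3}=\Pinf{2+1}$) underlying the $\Sinf{4}$ example: I would build $\L$ so that fixing $c$ cuts it into a copy of $\Z^{\lambda}$ and a remainder of rank $\le\lambda$, forcing $\SR(\L,c)=\lambda$, while the orbit of $c$ encodes ``the interval below $c$ is $\cong\Z^{\lambda}$'', a genuinely $\Pinf{\lambda+1}$ condition precisely because $\Z^{\lambda}$ has no $\Sinf{\lambda+1}$ Scott sentence. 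That pSR is exactly $\lambda$ (not smaller) follows because every finite parameter leaves an interval equal to $\Z^{\lambda}$ up to finitely many points, and $\Z^{\lambda}$ has pSR $\lambda$.

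For (ii) I would use the power-of-$\Z$ back-and-forth calculus of \cref{lem:timesZ}, whose force is that $2\lambda=\lambda$ makes $A_{\lambda}$, $B_{\lambda}$, $C_{\lambda}$ act at levels $\lambda$, $\lambda+1$, $\lambda+2$. To rule out a $\Pinf{\lambda+2}$ Scott sentence I would show $\L$ is not $\equiv_{\lambda+1}$-categorical, i.e.\ exhibit $\N\equiv_{\lambda+1}\L$ with $\N\not\cong\L$ (which, via $\L\leq_{\lambda+2}\N$, rules out any $\Pinf{\lambda+2}$ Scott sentence). The natural source of such ambiguity is $B_{\lambda}$: for $\lvert\K\rvert=\lvert\L\rvert=\aleph_{0}$ it gives $\Z^{\lambda}\cdot\K\equiv_{\lambda+1}\Z^{\lambda}\cdot\L$, so e.g.\ $\Z^{\lambda}\cdot\omega\equiv_{\lambda+1}\Z^{\lambda}\cdot\Q$ with the two non-isomorphic. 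I would place the companion $\N$ to differ from $\L$ exactly along such a $B_{\lambda}$-pair and use \cref{lem:zpowersbfpreserve} together with the interval characterization of $\leq_{\lambda+2}$ from \cref{lem:bfandpartitions} to lift the local $\equiv_{\lambda+1}$ equivalence to the whole ordering.

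The main obstacle, and the reason this case is harder than \cref{cor:SSCsuccessors}, is reconciling (i) and (ii). Multiplying any nontrivial base by $\Z^{\lambda}$ raises pSR above $\lambda$ (it adds $2\lambda=\lambda$ to the base's pSR), so the $\Sinf{\lambda+2}$ ordering \emph{cannot} be of the form $\Z^{\lambda}\cdot\B$; the ``$+2$'' must come from a marked junction, as in the $\Sinf{4}$ and $\Sinf{5}$ constructions. Yet the cleanest rank-$\lambda$ block $\Z^{\lambda}$ is $\equiv_{\lambda}$-categorical, so the $\equiv_{\lambda+1}$-ambiguity needed for (ii) cannot live inside a single $\Z^{\lambda}$ interval: it must be an ambiguity of $B_{\lambda}$ type that is nonetheless \emph{resolved by the single parameter $c$}. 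Designing $\L$ so that one parameter simultaneously collapses the rank to $\lambda$ and leaves a surviving unparametrized $B_{\lambda}$-ambiguity is the crux; I expect the correct ordering to interleave $\Z^{\lambda}$-blocks with the $\omega$-indexed pattern of \cref{prop:scatteredlimit}, so that $c$ pins the indexing while an $\omega$-versus-$\Q$ ambiguity of Scott rank $\lambda+2$ remains, and verifying this balance (rather than an application of \cref{lower}, which only yields the weaker bound here) is where the real work lies.
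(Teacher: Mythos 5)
Your reduction of the problem is sound: having a $\Sinf{\lambda+2}$ Scott sentence while admitting no Scott sentence of complexity $\dSinf{\lambda+1}$ or below pins the complexity to exactly $\Sinf{\lambda+2}$, the parameter recipe read off \cref{table:invariants} is the right way to get the upper bound (in fact, given (i), exhibiting any $\N\equiv_{\lambda+1}\L$ with $\N\not\cong\L$ already suffices for (ii); the stronger one-sided $\leq_{\lambda+2}$ relation you invoke is not needed), and your diagnosis of the crux --- one parameter must collapse the rank to $\lambda$ while an unparametrized level-$(\lambda+1)$ ambiguity survives --- is exactly the difficulty the paper's proof is organized around. But the proposal stops there: no linear ordering is ever defined, and neither (i) nor (ii) is verified for any concrete ordering; you explicitly defer ``the real work.'' Since the theorem is an existence statement, the construction and its verification are the entire content, so this is a genuine gap rather than a difference of route.

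Moreover, the mechanism you gesture at cannot work as stated. You want the surviving ambiguity to be of $B_\lambda$ type, an $\omega$-versus-$\Q$ ambiguity among $\Z^\lambda$-blocks ($\Z^\lambda\cdot\omega\equiv_{\lambda+1}\Z^\lambda\cdot\Q$). For that ambiguity to be present, $\L$, and hence $(\L,c)$, must contain intervals assembled from several unmarked $\Z^\lambda$-blocks, and such intervals have parameterized Scott rank above $\lambda$: for instance $\Z^\lambda\cdot\Q$ has Scott sentence complexity $\Pinf{2\lambda+2}=\Pinf{\lambda+2}$ by \cref{cor:SSCsuccessors}, hence $\pSR(\Z^\lambda\cdot\Q)=\lambda+1$ by \cref{table:invariants}; the underlying reason is that separating two distinct $\Z^\lambda$-blocks requires $\lnot(x\sim_\lambda y)$, a properly $\Pinf{\lambda}$ condition that no finite parameter tuple can eliminate when there are infinitely many blocks. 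So if the ambiguous interval is cut out by $c$, as your ``junction'' picture requires, then $\SR(\L,c)\geq\lambda+1$, destroying requirement (i); this is precisely the tension you name as the crux and leave unresolved. The paper resolves it by abandoning $\Z^\lambda$-blocks altogether: its repeating block is the internally marked ordering $\L_\lambda=\sum_{i\in\omega}i+\Z^{\delta_i}$ of \cref{prop:scatteredlimit}, whose copies are distinguishable by finite-rank formulas (so all orbits over the seam parameter stay at rank $\leq\lambda$), and the level-$(\lambda+1)$ ambiguity comes not from $B_\lambda$ but from the approximations $\L_{\lambda,n}\equiv_{2\delta_n}\L_\lambda$ supplied by \cref{lem:timesZ}: placing these cofinally along a copy of $\Q$ yields $\A$ with $\A\geq_\lambda\A+\B$ where $\B=\L_\lambda\cdot(1+\Q)$, whence $(\K,b)\leq_{\lambda+1}(\K,c)$ for $\K=\A+\B$, $c$ the initial point of $\B$, and $b$ a non-automorphic point --- an instability located at the seam, not a block-multiplicity ambiguity. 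Without that construction, or an equally concrete substitute that evades the $\pSR$ obstruction, the proposal does not prove the theorem.
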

\begin{proof}
    Let $(\delta_n)_{n\in\omega}$ be a
    fundamental sequence for $\lambda$ and let 
\[\L_\lambda:=\sum_{i\in\omega}i+\Z^{\delta_n}.\]
Furthermore, define for every $n$, the linear orderings:
\[\L_{\lambda,n}:=\sum_{i< n} i+\Z^{\delta_i}+\sum_{i\geq n} i+\Z^{\delta^n}.\]
Note that by Lemma \ref{lem:timesZ}, $\L_\lambda$ is approximated by the
sequence of these orderings in the sense that $\L_\lambda\equiv_{2\delta_n}\L_{\lambda,n}.$

Let $\B=\L_\lambda\cdot(1+\Q)$ and $\A$ be a copy of $\Q$ where an unbounded,
increasing sequence of points $c_n$ is replaced by $\L_{\lambda,n}$ and each
other point is replaced by $\L_\lambda$. We now claim that $\K=\A+\B$ is the
desired order. Note that over the parameter of the initial element of $\B$,
$\SR_p(\K)=\max(\SR(\A),\SR(\B))$ (this equality follows from~\cite[Lemma
11]{GM23}). We now demonstrate that this quantity is exactly $\lambda$.

To define the automorphism orbits of elements within $\B$ we have to distinguish
elements in the initial copy of $\L_\lambda$. For an element $b\in \B$, let $n$
be the size of the finite block to the left of $b$. We can write a formula of
finite rank saying that there is precisely one finite block of size $n$
to the left of $b$ and thus use this type of formulas to distinguish elements in
the first $\L_\lambda$ copy. If $b$ is in a finite block we conjunct this
formula or its negation with its position in this finite block, if $b$ is in a
copy of $\Z^{\delta_n}$, then we relativize the formula defining its
automorphism orbit in this ordering to the interval restricted by block of size
$n$ to the left and a block of size $n+1$ to the right, just as in the proof of
\cref{prop:scatteredlimit}. Conjuncting this with the formula deciding whether
$b$ is in the initial copy of $\L_\lambda$ yields the defining formula for its
automorphism orbit. To extend this definition to tuples we take the conjunction
of the formulas defining orbits for elements and a formula specifying the order
of the tuple.

For $\A$ the definitions are a little more subtle, but generally follow the same
plan. In particular, if the point in $\Q$ that $x$ lies in is between $c_n$ and
$c_{n+1}$, then the following describes the automorphism orbit of $x$:

\begin{enumerate}
    \item Let $z$ be the greatest element below $x$ with no successor or predecessor.

    \item Let $z_n$ be the initial element with no successor or predecessor
        within $c_n$; i.e. say that it is the greatest element with no successor
        or predecessor below an $n+1$ and $n+2$ block that bound a structure
        isomorphic to $\Z^{\delta_n}$.

    \item Define $z_{n+1}$ analogously to $z_n$, expect it is in $c_{n+1}.$
    \item Say that $z_n\leq x<z_{n+1}$.
    \item Check if $z=z_n$ or not.

    \item In an analogous manner to the case of $\A$, define the automorphism
        orbit of $x$ within its copy of $\Z^{\delta_k}$ or the finite
        block that it is in.

\end{enumerate}

This describes the automorphism orbit of a point and is of complexity less than
$\Sinf{\lambda}$. In order to extend this definition to tuples the only
additional detail needed is the order of the tuple and whether the tuples lie in the same point in $\Q$ or not. This can be done by comparing the various values of "$z$" that emerge for each point in the tuple.

In order to finish the proof, all that remains is to show that
$\SR(\K)=\lambda+2$, or in other words, that it has an orbit that it is not
$\Sinf{\lambda+1}$-definable. Let $c$ be the initial point in $\B$ and $b$ be
some other point in $\B$ in a block of size $1$. We will show that 
$(\K,b)\leq_{\lambda+1}(\K,c)$, so the orbit of $c$ is not $\Sinf{\lambda+1}$
definable in $\K$.

Note that $\K_{>c}\cong \K_{>b}$, so it is sufficient to show that
$\K_{<b}\leq_{\lambda+1} \K_{<c}.$
As $\K_{<c}$ is an initial segment of $\K_{<b}$, for the first turn of the game,
the $\exists$-player can pick whatever points the $\forall$-player picked in
$\K_{<c}$ according to the canonical embedding of $\K_{<c}$ in $\K_{<b}$.
Note that each interval is isomorphic except for the one between the greatest chosen points (call them $a_k$) and $c$ or $b$ respectively. 
$a_k$ may be greater than some finite number of the $c_n$. 
Therefore, up to a finite difference in the choice of sequence $\delta_n$ used
to define $\L_{\lambda,n}$, $(a_k,c)$ is just $\M+\A$ where $\M$ is
$\L_{\lambda,>c}$ or $\L_{\lambda,n,>c}$ depending on which type of order $a_k$
lives in and $(a_k,b)$ is just $\M+\K$.
In short, it is enough to demonstrate that
$$\A\geq_\lambda \K,$$
to show the claim.
To see this, choose $\alpha<\lambda$ and show that
$\A\geq_\alpha \K.$
Take $\delta_n>\alpha$. It is indeed the case that for $k\geq n$,
$\L_{\lambda}\equiv_\alpha \L_{\lambda,k}$. For some point $d$ in a block of
size $1$ between $c_n$ and $c_{n+1}$ we have that
\[\A=\A_{<d}+\sum_{q\in\Q} \A_q~~ \text{ and } ~~ \K=\A_{<d}+\sum_{q\in\Q} \B_q,\]
where $\A_q\equiv_\alpha \L_{\lambda}\equiv_\alpha \B_q$. This proves the claim.
\end{proof}

We need a different construction to deal with the $\Sinf{\lambda+3}$ case. 
However, it is arguably simpler than the construction we used for the example of
Scott sentence complexity $\Sinf{\lambda+2}$, as it is a sum of shuffle sums.
\begin{theorem}\label{thm:Slim+3}
For any limit ordinal $\lambda$, there is a linear order of Scott sentence
complexity $\Sinf{\lambda+3}.$
\end{theorem}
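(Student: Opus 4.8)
The plan is to realize the example via \cref{lower}, exactly as in the $\Sinf{4}$ and $\Sinf{5}$ cases but with the inserted interval and the shuffled block pushed up to the limit $\lambda$. Fix a fundamental sequence $(\delta_i)_{i\in\omega}$ for $\lambda$ and let $\L_\lambda=\sum_{i\in\omega} i+\Z^{\delta_i}$ together with its approximations $\L_{\lambda,n}$ be as in \cref{prop:scatteredlimit} and \cref{thm:Slim+2}. I would set $\A=Sh(\{\L_{\lambda,n}:n\in\omega\})$, $\B=\L_\lambda\cdot\Q$, and take $\K=\A+\L_\lambda+\B$. Here $\A$ and $\B$ are shuffle sums and $\B=\L_\lambda\cdot\Q$ has exactly the form required by \cref{lower} with the inserted interval equal to $\L_\lambda$. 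The point of shuffling the approximations $\L_{\lambda,n}$ rather than $\L_\lambda$ itself in $\A$ is to keep $\A$ genuinely $\leq_{\lambda+1}$-below $\B$ while keeping $\SR(\A)$ at $\lambda+1$ and breaking the isomorphism created by the insertion.

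For the lower bound I would apply \cref{lower} with $\alpha=\lambda+1$, which requires two facts. First, $\A+\L_\lambda+\B\not\cong\A+\B$: since $\L_\lambda+\L_\lambda\cdot\Q=\L_\lambda\cdot(1+\Q)$, the insertion produces a copy of $\L_\lambda$ immediately following $\A$ with no copies of its kind densely below it, whereas in $\A+\L_\lambda\cdot\Q$ the copies of $\L_\lambda$ after $\A$ are densely ordered with no first copy. Second, $\A\leq_{\lambda+1}\B$: using \cref{lem:bfandpartitions}, for each $\beta\leq\lambda$ and each partition of $\B=\L_\lambda\cdot\Q$ into intervals I match the $\L_\lambda$-blocks of $\B$ against blocks $\L_{\lambda,n}$ of $\A$ with $2\delta_n$ large, invoking $\L_\lambda\equiv_{2\delta_n}\L_{\lambda,n}$ from \cref{thm:Slim+2} together with the back-and-forth estimates of \cref{lem:timesZ,lem:zpowersbfpreserve} to drop the level below $\beta$. \cref{lower} then yields that $\K$ has no $\Pinf{(\lambda+1)+2}=\Pinf{\lambda+3}$ Scott sentence.

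For the upper bound I would exhibit a $\Sinf{\lambda+3}$ Scott sentence through a single parameter. Let $c$ be the first element of the middle copy of $\L_\lambda$. By \cite[Lemma 11]{GM23}, $\SR_p(\K)=\max(\SR(\A),\SR(\B))$, and I would argue that both summands have Scott rank at most $\lambda+1$ by writing down $\Sinf{\lambda+1}$ definitions of automorphism orbits exactly as in \cref{prop:scatteredlimit,thm:Slim+2}: an element is located relative to the definable finite blocks of size $n$ that bound its $\Z^{\delta_n}$-interval, and the sub-$\lambda$ Scott sentence of that $\Z^{\delta_n}$ is relativized to the interval; inside $\A$ the only extra datum is the eventual $\Z$-power that identifies which approximation $\L_{\lambda,n}$ a given copy realizes. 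Since each of these is $\Sinf{<\lambda}$, every orbit of $(\K,c)$ is $\Sinf{\lambda+1}$-definable while the orbit of $c$ itself is $\Pinf{\lambda+2}$-definable, and assembling these as in \cref{table:invariants} gives $\K$ a $\Sinf{\lambda+3}$ Scott sentence.

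Combining the two bounds closes the argument: $\K$ has a $\Sinf{\lambda+3}$ Scott sentence but no $\Pinf{\lambda+3}$ one, and because every complexity strictly below $\Sinf{\lambda+3}$ is refined by $\Pinf{\lambda+3}$, the Scott sentence complexity of $\K$ is exactly $\Sinf{\lambda+3}$. I expect the main obstacle to be the upper bound, and specifically the verification that neither $\SR(\B)=\SR(\L_\lambda\cdot\Q)$ nor $\SR(\A)$ exceeds $\lambda+1$: because the shuffled copies of $\L_\lambda$ (resp. $\L_{\lambda,n}$) have no endpoints and are densely packed, an element must be located within its copy purely through the definable finite blocks, and one has to check that this bookkeeping costs only one quantifier alternation over the sub-$\lambda$ orbit definitions coming from \cref{prop:scatteredlimit}. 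The inequality $\A\leq_{\lambda+1}\B$ is the other delicate point, since it must be tight enough that the lower bound lands exactly at $\Pinf{\lambda+3}$ and no lower.
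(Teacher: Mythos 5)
Your construction breaks at exactly the step you flag as delicate: the claim $\A\leq_{\lambda+1}\B$ for $\A=Sh(\{\L_{\lambda,n}:n\in\omega\})$ and $\B=\L_\lambda\cdot\Q$ is false. By \cref{lem:bfandpartitions} applied with $\beta=\lambda$ (which is allowed, since $\lambda<\lambda+1$), the inequality $\A\leq_{\lambda+1}\B$ requires that for every $b\in\B$ there be a \emph{single} $a\in\A$ with $(b,\infty)_\B\leq_\lambda(a,\infty)_\A$, i.e.\ with $(b,\infty)_\B\leq_\gamma(a,\infty)_\A$ for \emph{all} $\gamma<\lambda$ simultaneously; the response may not depend on $\gamma$. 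Take $b$ to be the size-$1$ block of some copy of $\L_\lambda$ in $\B$. Any candidate $a$ lies in a copy of some fixed $\L_{\lambda,n}$, and in the tail $(a,\infty)_\A$ the interval between the first block of size $k$ and the first block of size $k+1$ is isomorphic to $\Z^{\delta_n}$ for all large $k$, whereas in $(b,\infty)_\B$ that interval is isomorphic to $\Z^{\delta_k}$. Locating these first blocks costs only finitely many quantifiers, and $\Z^{\delta_n}$ is separated from $\Z^{\delta_k}$ ($k>n$) by a sentence of complexity roughly $2\delta_n$ plus a constant, hence below $\lambda$ (for instance ``the interval is a single $\sim_{\delta_n}$-class'', using that $\sim_{\delta_n}$ is $\Sinf{2\delta_n}$-definable). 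So for every choice of $a$ there is $\gamma<\lambda$ with $(b,\infty)_\B\not\leq_\gamma(a,\infty)_\A$, and therefore $\A\not\leq_{\lambda+1}\B$. The matching you invoke, $\L_\lambda\equiv_{2\delta_n}\L_{\lambda,n}$, can only produce $\A\leq_\lambda\B$, where the block $\L_{\lambda,n}$ used in the response is chosen \emph{after} the level $\gamma<\lambda$ is fixed; this is precisely how the approximations are used in \cref{thm:Slim+2}, and it cannot be pushed to level $\lambda+1$ because no single $n$ works for all levels below $\lambda$ at once. Consequently \cref{lower} gives you nothing at $\alpha=\lambda+1$. Worse, your $\K$ likely fails the theorem outright: the inserted copy of $\L_\lambda$ is the unique convex copy of $\L_\lambda$ with no convex copy of $\L_\lambda$ below it, a property expressible at level about $\lambda+1$ by relativizing the $\Pinf{\lambda}$ Scott sentence of $\L_\lambda$ to intervals (each $\L_{\lambda,n}$ contains no convex copy of $\L_\lambda$, as its gaps are eventually all $\Z^{\delta_n}$); this makes the orbits of points in the inserted copy $\Sinf{\lambda+2}$-definable without parameters, which by \cref{table:invariants} caps the Scott sentence complexity strictly below $\Sinf{\lambda+3}$.

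The repair is the paper's actual choice of blocks: one needs two \emph{non-isomorphic} orderings that are genuinely related at level $\lambda+1$, not approximations that agree only cofinally below $\lambda$. The paper takes $\L_1=1+\Z+\omega^\lambda$ and $\L_2=1+\Z+\omega^\lambda\cdot 2$, for which Ash's analysis of well-orderings gives the honest level-$(\lambda+1)$ inequality $\omega^\lambda\leq_{\lambda+1}\omega^\lambda\cdot 2$; it then sets $\A=\L_1\cdot\Q$, $\B=\L_2\cdot\Q$ and $\L=\A+\L_2+\B$, so that $\A\leq_{\lambda+1}\B$ is immediate and \cref{lower} applies at $\alpha=\lambda+1$, while the upper bound is obtained by an orbit-by-orbit analysis ($\SR(\A)\leq\lambda$, $\SR(\B)\leq\lambda+1$) much like the one you sketch. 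Your upper-bound outline is in the right spirit, but the lower bound forces the paper's kind of block pair; the approximations $\L_{\lambda,n}$ can never serve that purpose.
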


\begin{proof}
We let
$\L_1:=1+\Z+\omega^\lambda$ and $\L_2:=1+\Z+\omega^\lambda\cdot2$.
One important initial observation is that $\L_1\leq_{\lambda+1}\L_2$ as $\omega^\lambda\leq_{\lambda+1}\omega^\lambda\cdot2$.
As alluded to above, we take $\A=\L_1\cdot \Q$ and $\B=\L_2\cdot\Q$.
Finally we let $\L=\A+\L_2+\B$ and claim that $\L$ has the desired complexity
Let us begin with showing that $\SR(\A)\leq\lambda$. We define several helping predicates from which the claim will easily follow.
\begin{enumerate}
    \item Let $pt(x)$ if $x$ is in a block of size $1$. 
    \item Let $Z(x)$ say $\exists z<x ~pt(z)\land \forall y~z<y<x \to
        \bigvee_{n} S^n(y)=x.$ Note that $Z(x)$ holds if and only if $x$ is inside one of the copies of $\Z$.
    \item Let $init(x)$ if $\exists z<x ~ pt(z) \land (z,x)\cong\Z.$ Note that
        $init(x)$ if and only if $x$ is the first element in a copy of $\omega^\lambda$.
    \item Let $\gamma(x)$ if $\exists z<x ~init(z)\land [z,x)\cong\gamma$ for
        any $\gamma<\omega^\lambda$. Note that this definition is of complexity
        lower than $\Sinf\lambda$ as $\lambda$ is a limit ordinal. It describes
        the points that are of rank $\gamma$ within a copy of $\omega^{\lambda}$.
\end{enumerate}
Using these predicates one can define the automorphism orbits of any tuples in
$\A$ using formulas of complexity less than $\Sinf{\lambda+1}$.

All of these predicates have the same meaning within $\B$, however they leave
points in the second copy of $\omega^\lambda$ undefined. But it is still the case that $\SR(B)\leq\lambda+1$. To see this we need to define the additional relations
\begin{enumerate}
    \item Let 
        \[sec(x,z)\iff init(z) \land z<x \land \lnot\exists y~ \left(z<y<x\land
            pt(y)\right) \land \bigwedge_{\delta<\lambda}\forall y<x~ y\not\sim_\delta
        x.\]
        This $\Pinf\lambda$ predicate holds only if $z$ is the initial element in the second copy of $\omega^{\lambda}$ above the copy that $z$ is initial in.
    \item Let $\gamma_2(x)$ if $\exists y,z ~ sec(y,z) \land [y,x)\cong\gamma$ for any $\gamma\in\omega^\lambda$. Note that this definition is of complexity $\Sinf{\lambda+1}$ as $\lambda$ is a limit ordinal. It describes the points that are at the $\gamma$ position within a second copy of $\omega^{\lambda}$.
\end{enumerate}
Using these relations and those defined above, one can define the automorphism
orbits of tuples in $\B$.

In order to finish the proof of the claim, we appeal to Corollary \ref{lower} to
show that there is no $\Pinf{\lambda+3}$ Scott sentence.
This follows immediately as $\A\leq_{\lambda+1} \B$ is a direct consequence of $\L_1\leq_{\lambda+1}\L_2$.
\end{proof}

This leaves open the case of $\Sinf{\lambda+1}$. It is unclear to us if
such a linear ordering exists.

\begin{question}\label{ques:lambda+1}
    Is there a linear ordering of Scott sentence complexity $\Sinf{\lambda+1}$
    for $\lambda$ a limit ordinal?
\end{question}

All of the new examples of Scott sentence complexities given in this section
except \cref{prop:scatteredlimit} are linear orderings that contain $\Q$ as a
subordering and are thus non-scattered. While it is not hard to adapt the proof
of \cref{thm:nosigma3} to obtain that no scattered linear ordering can have
Scott sentence complexity $\Sinf{4}$, it seems difficult to look beyond that. An
analysis akin to the one presented in this article, but for Scott sentence
complexities of scattered linear orderings seems like it could prove interesting.
\begin{question}\label{ques:scatteredlo}
    Which Scott sentence complexities are realized by scattered linear
    orderings? In particular, for a given successor ordinal $\alpha>3$, is there a scattered linear
    ordering of Scott sentence complexity $\Sinf{\alpha}$?
\end{question}

\subsection{Remarks on effectivity.}
The examples we presented in this section are homogeneous enough so that if
$\alpha$ is a computable ordinal, then the examples witnessing Scott sentence
complexity $\Pinf{\alpha}$, $\Sinf{\alpha}$, or $\dSinf{\alpha}$ have computable
copies. Furthermore, these structures have c.e.\ Scott families, that is there
is a computable enumeration of codes for the formulas defining the automorphism
orbits of the tuples (potentially over a parameter). Thus, our examples give
computable linear orderings having computable Scott sentences.

The situation is a bit more tricky for examples of high Scott rank, that is
computable linear orderings with Scott rank $\ock$ or $\ock+1$. Our examples do
not give such structures. However, it is not hard to show that such structures
exist from existing examples in the literature and our results in
\cref{sec:fs-ssc}. We note that Calvert, Goncharov, and
Knight~\cite{calvert2007} already showed that there are computable linear
orderings of Scott rank $\ock$ and $\ock+1$. The following result is merely a
refinement.
\begin{theorem}\label{thm:highScottrank}
    There are computable linear orderings with Scott sentence complexities
    $\Pinf{\ock}$, $\Pinf{\ock+1}$, and $\Pinf{\ock+2}$.
\end{theorem}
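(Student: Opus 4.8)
The plan is to read off from \cref{table:invariants} what each target complexity demands and then manufacture it by feeding a known high-rank computable structure through the Friedman--Stanley embedding, using that $\Phi$ is a computable operator so that $L_\A$ is computable whenever $\A$ is. Concretely, both $\Pinf{\ock}$ and $\Pinf{\ock+1}$ require parameterless Scott rank exactly $\ock$; by our characterization of $\Pinf{\lambda}$ complexity they are distinguished by whether the structure carries an unstable $\ock$-sequence (the $\Pinf{\ock}$ case having none). The complexity $\Pinf{\ock+2}=\Pinf{(\ock+1)+1}$ requires Scott rank exactly $\ock+1$. Since $\ock>\omega$ we have $1+\ock=\ock$ and $3+\ock=\ock$, so the ordinal bookkeeping in \cref{thm:fsandscottrank} collapses pleasantly at these levels.

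For $\Pinf{\ock}$ the linear ordering of Calvert, Goncharov, and Knight \cite{calvert2007} already works: it is a computable linear order of Scott rank $\ock$ whose computable infinitary theory is $\aleph_0$-categorical, so by the characterization of $\Pinf{\lambda}$ complexity via $\aleph_0$-categoricity of the $\Pinf{\lambda}$ theory recorded above its Scott sentence complexity is $\Pinf{\ock}$. For $\Pinf{\ock+2}$ I would start from any computable structure $\A_2$ of Scott rank $\ock+1$ (Harrison \cite{harrison1968}, or \cite{calvert2007}). By \cref{table:invariants} its complexity is one of $\Sinf{\ock+1}$, $\dSinf{\ock+1}$, or $\Pinf{\ock+2}$, and in every one of these cases \cref{thm:fsandscottrank} yields $SSC(L_{\A_2})=\Pinf{\ock+2}$: via item (1) with $1+\alpha=\ock+2$ in the $\Pi$ case, and via item (2) with $1+\alpha=\ock+1$ (so $3+\alpha+1=\ock+2$) in the other two. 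Thus one need not determine which of the three $\A_2$ actually has; the embedding flattens all of them to $\Pinf{\ock+2}$, and $L_{\A_2}$ is computable.

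The remaining case, $\Pinf{\ock+1}$, is the crux. Preservation reduces it to producing a single computable structure of complexity $\Pinf{\ock+1}$, for then \cref{thm:fsandscottrank}(1) with $1+\alpha=\ock+1$ delivers a computable linear order of the same complexity. Such a source is precisely a computable structure of Scott rank $\ock$ that carries an unstable $\ock$-sequence. The route I would take is to locate a computable structure $\B$ of complexity $\dSinf{\ock+1}$; by \cref{table:invariants} it admits a parameter $\bp$ with $\Pinf{\ock}$-definable orbit for which $(\B,\bp)$ has Scott rank $\ock$, and since $\B$ has no $\Sinf{\ock+1}$ Scott sentence the structure $(\B,\bp)$ cannot have a $\Pinf{\ock}$ one (else prefixing $\exists\bp$ to it, conjoined with the $\Pinf{\ock}$ definition of the orbit of $\bp$, would furnish a $\Sinf{\ock+1}$ Scott sentence for $\B$); hence $(\B,\bp)$ has an unstable $\ock$-sequence and complexity $\Pinf{\ock+1}$, and applying $\Phi$ to the still-computable structure $(\B,\bp)$ finishes. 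The main obstacle is the existence of this computable $\dSinf{\ock+1}$ source, equivalently of a computable Scott-rank-$\ock$ structure with an unstable $\ock$-sequence. The difficulty is genuine: any fundamental sequence $(\alpha_i)$ with $\alpha_i\nearrow\ock$ is non-hyperarithmetic, so the explicit sum-of-$\Z^{\delta_i}$ construction of \cref{prop:scatteredlimit} cannot be made computable at $\lambda=\ock$; one is forced to read the unstable sequence off a pseudo-well-ordering (Harrison-type) scaffold, and the delicate point is to arrange that the parameterless Scott rank stays pinned at $\ock$ rather than spilling over to $\ock+1$.
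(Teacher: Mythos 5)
Your $\Pinf{\ock}$ and $\Pinf{\ock+2}$ cases are correct, and both take routes that differ from the paper's. For $\Pinf{\ock+2}$ the paper simply quotes (from \cite{alvir2021}) that the Harrison ordering $\ock\cdot(1+\Q)$ itself has this complexity, whereas your flattening argument---push any computable structure of Scott rank $\ock+1$ through the Friedman--Stanley embedding and note that all three possible complexities $\Sinf{\ock+1}$, $\dSinf{\ock+1}$, $\Pinf{\ock+2}$ land on $\Pinf{\ock+2}$ by \cref{thm:fsandscottrank}---has the advantage of not needing to know the Harrison ordering's exact complexity. For $\Pinf{\ock}$ the paper applies \cref{thm:fsandscottrank} to a computable graph of that complexity, whereas you use the Calvert--Goncharov--Knight linear ordering directly; this works, but one step should be made explicit: what \cite{calvert2007} provides is $\aleph_0$-categoricity of the \emph{computable} (lightface) infinitary theory, while the characterization recorded in \cref{sec:fs-ssc} concerns the boldface $\Pinf{\ock}$ theory. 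The inference does go through, since every computable infinitary sentence has rank below $\ock$ and is therefore a $\Pinf{\ock}$ sentence, so the models of the $\Pinf{\ock}$ theory form a subclass of the models of the computable theory; Scott rank exactly $\ock$ then pins the complexity at $\Pinf{\ock}$.

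The $\Pinf{\ock+1}$ case, which you rightly call the crux, is genuinely incomplete. Your reductions are sound: \cref{thm:fsandscottrank} does carry a computable structure of complexity $\Pinf{\ock+1}$ to a computable linear ordering of the same complexity (as $3+(\ock+1)=\ock+1$), and your parameter trick is valid (if $(\B,\bp)$ had a $\Pinf{\ock}$ Scott sentence $\phi(\bx)$, then $\exists\bx\,\phi(\bx)$ would be a $\Sinf{\ock+1}$ Scott sentence for $\B$, contradicting $SSC(\B)=\dSinf{\ock+1}$, so $(\B,\bp)$ has complexity exactly $\Pinf{\ock+1}$). But the input to these reductions---a computable structure of Scott sentence complexity $\Pinf{\ock+1}$, or your proposed $\dSinf{\ock+1}$ source---is never produced; you explicitly leave its existence as an open obstacle, so the theorem is not proved. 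The missing ingredient is not a construction you must carry out by hand (you are right that nothing like \cref{prop:scatteredlimit} can be made computable at $\lambda=\ock$); it is the known existence, which the paper invokes, of computable structures (graphs) with Scott sentence complexities $\Pinf{\ock}$ and $\Pinf{\ock+1}$; see \cite{alvir2021}. The paper's entire proof is exactly this: cite the Harrison ordering for $\Pinf{\ock+2}$, and apply \cref{thm:fsandscottrank} to such graphs for the other two complexities, using that $\ock>\omega$ makes these complexities fixed points of the embedding. Note also that routing through a $\dSinf{\ock+1}$ source is an unnecessary detour and asks for more than is available: even for arbitrary vocabularies that existence is delicate, and for linear orderings the analogous question is left open in \cref{ques:highdSinf}.
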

\begin{proof}
    As is remarked in \cite{alvir2021}, the Harrison linear ordering $\ock\cdot (1+\Q)$ has Scott sentence
    complexity $\Pinf{\ock+2}$. The existence of structures of linear orderings
    of Scott sentence complexity $\Pinf{\ock+1}$ and $\Pinf{\ock}$ follows from
    \cref{thm:fsandscottrank} and the existence of a computable graph with these Scott
    sentence complexities.
\end{proof}
As for general limit ordinals we do not know whether there are computable linear orderings
of Scott sentence complexity $\Sinf{\ock+1}$. Additionally, we do not know
whether there is a computable linear of Scott rank $\dSinf{\ock+1}$.

\begin{question}\label{ques:highSinf}
    Is there a computable linear ordering of Scott sentence complexity
    $\Sinf{\ock+1}$?
\end{question}
\begin{question}\label{ques:highdSinf}
    Is there a computable linear ordering of Scott sentence complexity
    $\dSinf{\ock+1}$?
\end{question}
 
\printbibliography
\end{document}